\documentclass{amsart}
\usepackage{amsmath}
\usepackage{amscd} 
\usepackage{amssymb} 
\usepackage{stmaryrd} 
\usepackage{times}
\usepackage{mathrsfs}

\usepackage{color}
\usepackage{booktabs}
\usepackage[all]{xy}
\def\lz{\par \vspace{8pt}}

\usepackage[colorlinks=true, pdfstartview=FitV, linkcolor=blue, pagebackref, citecolor=blue, urlcolor=blue]{hyperref}

\makeatletter
\@namedef{subjclassname@2020}{\textup{2020} Mathematics Subject Classification}
\makeatother

\newtheorem{ter}[equation]{Terminology}
\newtheorem{thm}[equation]{Theorem}
\newtheorem{lem}[equation]{Lemma}

\newtheorem{cor}[equation]{Corollary}
\newtheorem{prop}[equation]{Proposition}

\newtheorem{rem}[equation]{Remark}

\newtheorem*{thm*}{Theorem}
\newtheorem*{prop*}{Proposition}
\newtheorem*{cor*}{Corollary}
\newtheorem*{lem*}{Lemma}
\newtheorem*{MT*}{Main Theorem}

\newtheorem*{ques*}{Question}

\theoremstyle{definition} %

\newtheorem*{defn*}{Definition}

\newtheorem{eg}[equation]{Example}

\theoremstyle{remark} %

\newtheorem*{rmk*}{Remark}
\newtheorem*{rmks*}{Remarks}

\DeclareMathOperator{\Pf}{Pf}
\DeclareMathOperator{\Sim}{Sim}
\DeclareMathOperator{\Str}{Str}

\newcommand{\vssp}{\vspace{2,5pt}}

\DeclareMathOperator{\diag}{diag}

\DeclareMathOperator{\Cent}{Cent}

\DeclareMathOperator{\red}{red}

\newcommand{\Eins}{\mathrm{\mathbf{1}}}

\newcommand{\vph}{\varphi}
\newcommand{\eps}{\varepsilon}

\newcommand{\trans}{\mathsf{T}}

\newcommand{\bft}{\mathbf{t}}
\newcommand{\bfC}{\mathbf{C}}

\newcommand{\IN}{\mathbb{N}}
\newcommand{\ZZ}{\mathbb{Z}}
\newcommand{\IZ}{\ZZ}

\newcommand{\QQ}{\mathbb{Q}}
\newcommand{\IQ}{\QQ}

\newcommand{\msB}{\mathscr{B}}

\newcommand{\alg}[1]{{#1\text{-\textbf{alg}}}}
\newcommand{\kalg}{\alg{k}}

\newcommand{\oalg}{\mfo\mathchar45\mathbf{alg}}

\DeclareMathOperator{\Core}{Core}
\DeclareMathOperator{\Gal}{Gal}

\DeclareMathOperator{\Nil}{Nil}
\DeclareMathOperator{\Oct}{Oct}

\DeclareMathOperator{\ch}{char}

\newcommand{\bfAut}{\mathbf{Aut}}

\newcommand{\bfT}{\mathbf{T}}

\newcommand{\mfo}{\mathfrak{o}}
\newcommand{\mfp}{\mathfrak{p}}

\newcommand{\la}{\langle}
\newcommand{\ra}{\rangle}
\newcommand{\dla}{\langle\langle}
\newcommand{\dra}{\rangle\rangle}

\renewcommand{\:}{\colon\,}

\newcommand{\skiprootsystem}{\mathsf}
\newcommand{\rsF}{\skiprootsystem{F}_4}
\newcommand{\rsG}{\skiprootsystem{G}_2}

\newcommand{\rsD}{\skiprootsystem{D}}

\newcommand{\rsE}{\skiprootsystem{E}}

\DeclareMathOperator{\Her}{Her}

\DeclareMathOperator{\End}{End}

\DeclareMathOperator{\GL}{GL}
\numberwithin{equation}{section}

\begin{document}

\title{Homogeneous Freudenthal algebras and the first Tits construction}


\author[H.P. Petersson]{Holger P. Petersson}
\address{Petersson: Fakult\"at f\"ur Mathematik und Informatik, FernUniversit\"at in Hagen, D-58084 Hagen, Germany}
\email{holger.petersson@fernuni-hagen.de}


\author[M. Thakur]{Maneesh Thakur}
\address{Thakur: Indian Statistical Institute, Stat-Math.-Unit, 8th Mile Mysore Road, Bangalore-560059, India}
\email{maneesh.thakur@gmail.com}

\subjclass[2020]{Primary 17C40; Secondary 17A75, 17C30, 20G41}

\begin{abstract}
Freudenthal algebras over a field are basically the same as Jordan algebras of degree $3$ remaining simple under all base field extensions. These algebras are intimately linked, via their automorphism groups and structure groups, to simple algebraic groups over arbitrary fields. Our main concern  here will be the question of when these algebras are homogeneous in the sense that all their Jordan isotopes are isomorphic. We answer this question by presenting various necessary and sufficient conditions for homogeneity and by connecting it with the first Tits construction of cubic Jordan algebras, most notably through investigating Freudenthal division algebras over complete fields under a discrete valuation. We also study the first Tits construction in its own right by producing a local version of it, deriving a local-global principle, and by connecting it with the embeddibility of certain rank-$2$-tori into the automorphism group scheme of an Albert division algebra.
\end{abstract}

\maketitle

\setcounter{tocdepth}{1}
\tableofcontents

\section{Introduction}

In this paper, we will be concerned with the interplay of three important algebraic concepts: Freudenthal algebras, homogeneous Jordan algebras, and the first Tits construction.

To begin with, Freudenthal algebras form a class of algebraic structures that may be regarded as the analogue in degree $3$ of (unital) composition algebras. Just as composition algebras fit into the broader picture of alternative algebras whose elements satisfy a universal quadratic equation, Freudenthal algebras fit into the broader picture of Jordan algebras whose elements satisfy a universal cubic equation. And, while composition algebras exist only in dimensions $1,2,4,8$, the dimensions of Freudenthal algebras are confined to the numbers $1,3,6,9,15$, and $27$. Ignoring the low dimensions $1$ (the base field) and $3$ (the Jordan algebras of cubic \'etale algebras), Freudenthal algebras are twisted verstion of the Jordan algebra of $3$-by-$3$ hermitian matrices with entries in a split composition algebra and scalars down the diagonal, the latter condition being automatic if the base field has characteristic not $2$.

The similarity between composition algebras and Freudenthal algebras becomes most striking when considering the connection with exceptional algebraic groups. As in the case of octonion algebras and groups of type $\rsG$, passing from an algebra to its automorphism group scheme gives a one-to-one correspondence between Albert algebras, that is, Freudenthal algebras of dimension $27$, and groups of type $\rsF$. Other important connections, though not quite as close as the preceding one, exist between Albert algebras and groups of type $\rsD_4,\rsE_6,\rsE_7,\rsE_8$. In particular, the Kneser-Tits conjecture for groups of type $\rsE^{78}_{7,1}$ and $\rsE^{78}_{8,2}$ was settled by proving $R$-triviality for the structure group of an Albert division algebra \cite{MR4345011, MR4474885}.

To continue, homogeneity of Jordan algebras is a property intimately tied up with the notion of an isotope. Any invertible element $p$ of a Jordan algebra $J$ induces canonically  a new Jordan algebra structure on the underlying vector space, denoted by $J^{(p)}$ and called the $p$-isotope of $J$, whose identity element is the inverse of $p$ in $J$. Isotopes of $J$ will not always be isomorphic to $J$ but if they are, then $J$ is said to be homogeneous. The importance of isotopes, and of the notion of homogeneity,  derives from the fact that many useful properties attached to Jordan algebras hold only up to isotopy, that is, they may fail for the algebra itself but become valid  when passing to an appropriate isotope. For example, the question raised by Albert \cite{MR182647} as to whether every Albert division algebra contains a cyclic cubic subfield is open to this very day but has an affirmative answer in an appropriate isotope \cite{MR4242148}.

The two Tits constructions come into play when dealing with Freudenthal division algebras. Here it is the first construction (as the more elementary of the two), with its remarkable analogy to the Cayley-Dickson construction of composition algebras, that demands our attention. A first instance of its interplay with the preceding concepts may be found in \cite[Cor.~4.9]{MR734841} , which says that Albert division algebras arising from the first Tits construction are always homogeneous. It is a natural question to ask for the converse: is every homogeneous Albert division algebra a first Tits construction? Since, unfortunately, we have not been able to settle this question one way or the other, we decided to undertake a systematic investigation of homogeneous Freudenthal algebras. It is the purpose of this paper to present the results of our investigation.

In the next two sections, besides fixing notation and terminology, we recall some basic facts  about the subject matter that will be needed to understand the subsequent development of the paper. The emphasis is on (cohomological) invariants of Freudenthal algebras$\--$a tool we will make use of quite heavily later on. The question of which set of invariants classifies a specific class of Freudenthal algebras up to isomorphism (resp., up to isotopy) turns out to be particularly important. Along the way, we obtain  first characterizations of homogeneous Freudenthal algebras of dimension $9$ (Prop.~\ref{p.HOMDIS}). 

Section~$4$ is devoted to what could be called a local version of the first Tits construction for Albert division algebras. We derive a local-global principle in this setting (Thm.~\ref{embed}) and draw the connection with the embedibility of certain rank-$2$-tori in the automorphism group scheme of the underlying algebra (Thm.~\ref{normic}). 

In section~$5$, we begin by showing that, contrary to the case of Albert division algebras, nine-dimensional Freudenthal division algebras exist that are first Tits constructions but not homogeneous (Example~\ref{NINEFI}). We then proceed to derive necessary and sufficient conditions for reduced (that is, non-division) Freudenthal algebras to be homogeneous. As the main conclusion from these criteria, it will be shown that, loosely speaking, any sufficiently rich class of homogeneous Freudenthal algebras having dimension $< 27$ can always be enlarged to include higher-dimensional algebras with almost the same properties (Thm.~\ref{t.HOMEN}). In particular, if the Freudenthal algebra of symmetric $3$-by-$3$ matrices over a given field is homogeneous, then so are all Freudenthal algebras over that field (Thm.~\ref{t.HOMEN}~(c)). 

In section~$6$, we study Freudenthal algebras that are strictly homogeneous in the sense that they remain homogeneous under all base field extensions. Our approach leads to a complete classification of strictly homogeneous Freudenthal algebras (Cor.~\ref{c.STRIHO}): they are either split, or isomorphic to $D^{(+)}$ for some central associative division algebra $D$ of degree $3$, or an Albert division algebra arising from the first Tits construction.

In sections~$7$ and $8$, we take up the study of Freudenthal division algebras over a complete field under a discrete valuation. Such a study had been undertaken before for Albert division algebras only \cite{MR0379620}. Revisiting this study once more and extending its results to arbitrary Freudenthal division algebras makes sense for the following reasons. On the one hand, we obtain new examples of nine-dimensional Freudenthal division algebras that are first Tits constructions and homogeneous (Cors.~\ref{c.TRICHO}, \ref{RAFRHO}) while on the other, we are able to set the record straight with regard to the naive treatment of cubic forms adopted in \cite{MR0379620}. In developing the theory along the lines indicated, we take full advantage of the approach to the two Tits constructions presented in \cite[Chap.~VII]{MR4806163} and here, in particular, of the fact that this approach works not just over fields but, to a large extent, over arbitrary commutative base rings.

The paper concludes with a brief excursion into homogeneous Jordan algebras of Clifford type. We show, in particular, that the Jordan algebra of a pointed Pfister quadratic form is strictly homogeneous (Cor.~\ref{PFIHOM}).

\section{Proper Freudenthal algebras and the Tits constructions} \label{s.REDFRA} 
Throughout this paper, we fix a field $k$ of arbitrary characteristic. The category of unital commutative associative $k$-algebras will be denoted by $\kalg$. For notation, terminology and basic facts about (unital) composition algebras, (cubic) Jordan (in particular, Freudenthal) algebras and related algebraic groups, the reader is referred to \cite{MR4806163} and \cite{MR1632779}; when it comes to quadratic forms, our standard reference will be \cite{MR2427530}. In the following review, we will therefore confine ourselves to what is absolutely indispensable for understanding the subsequent development of the paper.

\subsection{Homogeneous Jordan algebras} \label{ss.HOJOAL} A \emph{Jordan $k$-algebra} is a vector space $J$ over $k$ together with a quadratic map $U\:J \to \End_k(J)$, $x \mapsto U_x$, (the \emph{$U$-operator}) and a distinguished quantity $1 = 1_J \in J$ (the \emph{unit element}) such that the identities
\[
U_1 = \Eins_J, \quad U_{U_xy} = U_xU_yU_x, \quad U_xV_{y,x} = V_{x,y}U_x
\]
hold in all scalar extensions, where $V_{x,y}z := U_{x,z}y$ for $x,y,z \in J$ and $U_{x,z} := U_{x+z} - U_x - U_z$ is the bilinearization of the $U$-operator. An element $p \in J$ is said to be \emph{invertible} if $U_p\:J \to J$ is bijective, in which case $p^{-1} := U_p^{-1}p$ is called the \emph{inverse} of $p$ in $J$. The set of invertible elements in $J$ will be denoted by $J^\times$. For $p \in J^\times$, the vector space $J$ together with the $U$-operator $x \mapsto U_x^{(p)} := U_xU_p$ and the unit element $1^{(p)} := p^{-1}$ is again a Jordan algebra, called the \emph{$p$-isotope} of $J$ and denoted by $J^{(p)}$. The isotopes $J^{(p)}$, $p \in J^\times$, are in general not isomorphic to $J$, but if they are, for all $p \in J^\times$, then $J$ is called \emph{homogeneous}. The \emph{autotopies} of $J$, that is, the isomorphisms from $J$ onto its various isotopes, form a subgroup of $\GL(J)$, denoted by $\Str(J)$ and called the \emph{structure group} of $J$, so that $J$ is homogeneous if and only if its structure group acts transitively on its invertible elements, hence the name.

\subsection{Cubic Jordan algebras} \label{ss.CUBJOR} Following \cite[34.1]{MR4806163}, we define a \emph{cubic Jordan algebra} over $k$ as a Jordan $k$-algebra $J$ together with a cubic form $N_J\:J \to k$ (the \emph{norm}), that is, a scalar polynomial law over $k$ that is homogeneous of degree $3$, satisfying the following conditions.
\begin{itemize}
\item  [(a)] $N_J$ \emph{permits Jordan composition} in the sense that the equations
\[
N_J(1_J) = 1, \quad N_J(U_xy) = N_J(x)^2N_J(y)
\]
hold strictly, i.e., in all base field extensions.

\item [(b)] For all field extensions $K/k$ and all $x \in J_K$ (the base change of $J$ from $k$ to $K$), the polynomial
\[
m_{J,x}(\bft) = N_J(\bft 1_{J_K} - x) = \bft^3 - T_J(x)\bft^2 + S_J(x)\bft - N_J(x)
\] satisfies the equations
\[
m_{J,x}(x) = (\bft m_{J,x})(x) = 0.
\]
\end{itemize}
Here $T_J\:J \to k$ (resp., $S_J\:J \to k$) is called the \emph{linear} (resp., \emph{quadratic}) \emph{trace} of $J$, which makes sense because $T_J$ (resp., $S_J$) is a linear (resp., quadratic) form. By contrast, the symmetric bilinear form $T_J\:J \times J \to k$ defined by
\begin{align*}
T_J(x,y) := T_J(x)T_J(y) - S_J(x,y) &&(x,y \in J)
\end{align*}
in terms of the linear trace and the bilinearized quadratic one is called the \emph{bilinear trace} of $J$. The \emph{adjoint} of $J$ is defined as the quadratic map
\[
J \longrightarrow J, \quad x \longmapsto x^\sharp := x^2 - T_J(x)x + S_J(x)1_J. 
\]
Note that if we know the adjoint and the bilinear trace of $J$, then we know its Jordan structure, thanks to the formula
\begin{align}
\label{UOPCUB} U_xy = T_J(x,y)x - x^\sharp \times y
\end{align}
for the $U$-operator, where $x \times y = (x + y)^\sharp - x^\sharp - y^\sharp$ denotes the bilinearized adjoint. Finally, we say that $J$ is \emph{regular} if its dimension is finite and the bilinear trace is a regular symmetric bilinear form.

Straightforward modifications of the preceding set-up lead to the notion of \emph{cubic alternative algebras}. We refer to \cite[34.11]{MR4806163} for details.

\subsection{Proper reduced Freudenthal algebras} \label{ss.SIREFR} By a \emph{Freudenthal algebra} over $k$ we mean a cubic Jordan $k$-algebra $J$ which either is isomorphic to $E^{(+)}$, for some cubic \'etale $k$-algebra $E$, or makes the base change $J_K$, for any field extension $K/k$, a simple Jordan $K$-algebra \cite[34.17, 39.8]{MR4806163}. A Freudenthal $k$-algebra is said to be \emph{proper} if it has dimension at least $6$; it is said to be \emph{reduced} if it contains an elementary frame, that is, a complete orthogonal system of idempotents that are \emph{elementary} in the sense that their adjoint is $0$ and their linear trace is $1$, see \cite[\S37]{MR4806163} for details.

By \cite[41.1]{MR4806163}, proper reduced Freudenthal algebras over $k$ up to isomorphism have the form $J := \Her_3(C,\Gamma)$, where $C$ is a composition algebra over $k$ and $\Gamma \in \GL_3(k)$ is a diagonal matrix: $\Gamma = \diag(\gamma_1,\gamma_2,\gamma_3)$ with $\gamma_1,\gamma_2,\gamma_3 \in k^\times$. Recall that $J$ consists of all $3$-by-$3$-matrices $x$ with entries in $C$ that are $\Gamma$-hermitian (i.e., $x = \Gamma^{-1}\bar x^\trans \Gamma$) and have scalars down the diagonal, the latter condition being automatic for $\mathrm{char}(k) \neq 2$. From \cite[36.4]{MR4806163} we deduce that norm, adjoint and bilinear trace of $J$ are given by the formulas
\begin{align*}
N_J(x) =\,\,&\xi_1\xi_2\xi_3 - \sum\gamma_j\gamma_l\xi_in_C(u_i) + \gamma_1\gamma_2\gamma_3t_C(u_1u_2u_3), \\
x^\sharp =\,\,&\sum\Big(\big(\xi_j\xi_l - \gamma_j\gamma_ln_C(u_i)\big)e_{ii} + \big(-\xi_iu_i + \gamma_i\overline{u_ju_l}\big)[jl]\big), 
\end{align*}
\begin{align}
\label{REDBLT} T_J(x,y) =\,\,&\sum\big(\xi_i\eta_i + \gamma_j\gamma_ln_C(u_i,v_i)\big)
\end{align}
for $x = \sum(\xi_ie_{ii} + u_i[jl]), y = \sum(\eta_ie_{ii} + v_i[jl]) \in J$, $\xi_i,\eta_i \in k$, $u_i,v_i \in C$, where we systematically adhere to the \emph{ternary cyclicity convention}: Unadorned sums like the above are to be taken over all cyclic permutations $(ijl)$ of $(123)$.

Recall from \cite[Exc.~37.22~(b)]{MR4806163} that up to isomorphism the diagonal entries of $\Gamma$ may be multiplied by arbitrary non-zero square factors in $k$ and that we may always assume $\gamma_3 = 1$. In particular, we put $\Her_3(C) := \Her_3(C,\Eins_3)$. We wish to understand the isotopes of $\Her_3(C)$. To this end, we require a harmless variation of \cite[Exc.~37.23]{MR4806163}.

\begin{prop} \label{p.JOMAST}
Let $C$ be a composition $k$-algebra, $J := \Her_3(C)$ and 
\[
\Gamma = \diag(\gamma_1,\gamma_2,\gamma_3) \in \GL_3(k). 
\]
With $p := \sum \gamma_ie_{ii} \in J^\times$, the assignment
\[
\sum(\xi_ie_{ii} + u_i[jl]) \longmapsto \sum\big((\gamma_i\xi_i)e_{ii} + (\gamma_j\gamma_lu_i)[jl]\big)
\]
for $\xi_i \in k$, $u_i \in C$, $1\le i\le 3$, defines an isomorphism $\vph$ from $\Her_3(C,\Gamma)$ onto the isotope $J^{(p^{-1})}$.
\end{prop}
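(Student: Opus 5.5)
A bijective linear map between cubic Jordan algebras is a Jordan isomorphism once it preserves the unit and the norm (strictly), and more generally once it preserves unit, adjoint and bilinear trace. The Jordan structure is then recovered via \eqref{UOPCUB}. I would therefore compare norms, units and adjoints directly. First, $\vph$ is obviously $k$-linear and bijective, since all $\gamma_i \in k^\times$. Its image lands in $J = \Her_3(C)$ because for $\Gamma$-hermitian matrices the off-diagonal entries are encoded by $u_i[jl]$ with the same parametrization; this bookkeeping comes from \cite[\S36]{MR4806163}.

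Next I recall the cubic structure of the isotope $J^{(q)}$ with $q := p^{-1}$ from \cite[Prop.~34.? / Exc.]{MR4806163}. Its norm is $N^{(q)}(x) = N_J(q)N_J(x)$, its unit is $q^{-1} = p$, and its adjoint is $x^{\sharp(q)} = N_J(q)\,U_q^{-1}x^\sharp$. Since $p = \sum\gamma_ie_{ii}$ is diagonal, we have $N_J(q) = (\gamma_1\gamma_2\gamma_3)^{-1}$. Also $U_q^{-1} = U_p$ acts on $e_{ii}$ by multiplication with $\gamma_i^2$ and on $u[jl]$ by multiplication with $\gamma_j\gamma_l$.

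Now I check the conditions.

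The unit is preserved: $\vph(1) = \sum\gamma_ie_{ii} = p = 1^{(q)}$.

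For the norm, with $x = \sum(\xi_ie_{ii}+u_i[jl])$, the formula for $N_J$ with $\Gamma = \Eins_3$ gives
\[
N_J(\vph x) = \gamma_1\gamma_2\gamma_3\xi_1\xi_2\xi_3 - \sum\gamma_i\gamma_j^2\gamma_l^2\xi_in_C(u_i) + (\gamma_1\gamma_2\gamma_3)^2t_C(u_1u_2u_3).
\]
Multiplying by $N_J(q) = (\gamma_1\gamma_2\gamma_3)^{-1}$ yields exactly the formula for $N_{\Her_3(C,\Gamma)}(x)$. This identity is polynomial, so it holds strictly, i.e.\ in all base change extensions.

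It then follows that $\vph$ is an isomorphism of cubic norm structures, and hence of Jordan algebras, since the Jordan algebra of a cubic norm structure is determined by the unit and the norm: the trace forms and the adjoint are derived from $N$ and $1$ by differentiation. For safety I would also verify the adjoint compatibility $\vph(x^\sharp) = N_J(q)U_p(\vph x)^\sharp$ directly from the adjoint formula. For example, the $e_{ii}$-coefficient on the right is $\gamma_1\gamma_2\gamma_3^{-1}\cdots$, and it is a routine comparison.

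The main obstacle is pinning down the correct formulas for the isotope's cubic structure, in particular whether the adjoint of the isotope involves $U_q$ or $U_q^{-1}$. I would settle this by checking the unit condition $1^{(q)\sharp(q)} = 1^{(q)}$, i.e.\ $p^{\sharp(q)} = p$. With the formula above this reads $N(q)U_pp^\sharp = N(q)N(p)p = p$, which is correct.
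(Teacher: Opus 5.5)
Your proof is correct, but it takes a different route from the paper. The paper does no coordinate computation. It quotes \cite[Exc.~37.23]{MR4806163}, which says that $\vph$ is an isomorphism from $\Her_3(C,\Gamma)^{(p)}$ onto $J$. It then transports isotopes along $\vph$, using $\Her_3(C,\Gamma) = (\Her_3(C,\Gamma)^{(p)})^{(p^{-2})}$ (since $U_pp^{-2} = 1$) and $\vph(p^{-2}) = p^{-1}$. You instead verify directly that $\vph$ carries unit, norm and adjoint of $\Her_3(C,\Gamma)$ to those of $J^{(q)}$, $q = p^{-1}$, using $1^{(q)} = p$, $N^{(q)} = N_J(q)N_J$ and $x^{\sharp(q)} = N_J(q)U_px^\sharp$. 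In effect you reprove a version of the cited exercise. Your isotope formulas and your norm computation are right. The paper's argument is shorter and uniform in $C$. Yours is self-contained, but it needs the cubic structure of isotopes and some care in the degenerate case.

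That degenerate case is where your leading principle breaks. The claim that preserving unit and norm already suffices, because ``the adjoint is derived from $N$ and $1$ by differentiation'', only works when the bilinear trace is nondegenerate. The gradient identity $T(x^\sharp,y) = \partial_yN(x)$ determines $x^\sharp$ only modulo the radical of $T$. For $C = k$ and $\ch(k) = 2$, the radical is the entire off-diagonal part, and the paper genuinely needs this case (Cor.~\ref{c.JOMACL}). So the adjoint comparison you call optional is in fact required.

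The comparison does go through. Take $\vph(x^\sharp)$, with $x^\sharp$ computed in $\Her_3(C,\Gamma)$, and $N_J(q)U_p(\vph x)^\sharp$. Both have $e_{ii}$-coefficient $\gamma_i(\xi_j\xi_l - \gamma_j\gamma_ln_C(u_i))$ and $[jl]$-coefficient $\gamma_j\gamma_l(-\xi_iu_i + \gamma_i\overline{u_ju_l})$; your ``$\gamma_1\gamma_2\gamma_3^{-1}\cdots$'' is not the right coefficient. The bilinear trace is determined strictly by unit and norm, via $N(\bft 1 - x)$. So preserving unit, norm and adjoint gives preservation of the $U$-operator by \eqref{UOPCUB}, and $\vph$ is an isomorphism in all cases.
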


\begin{proof} 
By \cite[Exc.~37.23]{MR4806163}, $\vph$ is an isomorphism from $\Her_3(C,\Gamma)^{(p)}$ onto $J$, hence also one from $\Her_3(C,\Gamma) = (\Her_3(C,\Gamma)^{(p)})^{(p^{-2})}$ to $J^{(\vph(p^{-2}))} = J^{(p^{-1})}$.
\end{proof}

\begin{cor} \label{c.JOMAIS}
Assume in Prop.~\emph{\ref{p.JOMAST}} that $C$ has dimension at most two. Then $\Her_3(C,\Gamma) \cong J$ if and only if  there exists $g \in \GL_3(C)$ such that $g p\bar g^\trans \in k\Eins_3$.
\end{cor}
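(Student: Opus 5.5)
The plan is to use Prop.~\ref{p.JOMAST} to replace $\Her_3(C,\Gamma)$ by the isotope $J^{(p^{-1})}$. Then $\Her_3(C,\Gamma)\cong J$ if and only if $J^{(p^{-1})}\cong J$. Since $\dim C\le 2$, $C$ is commutative and associative. So $\GL_3(C)$ and the conjugate transpose $x\mapsto \bar x^\trans$ make sense, and $J$ is the Jordan algebra of hermitian elements of the associative algebra with involution $(\mathrm{Mat}_3(C),\,x\mapsto\bar x^\trans)$. Its $U$-operator is the associative expression $U_xy=xyx$, and hence $U^{(p^{-1})}_xy=xp^{-1}yp^{-1}x$.

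\emph{Sufficiency.} Suppose $g\in\GL_3(C)$ satisfies $gp\bar g^\trans=\lambda\Eins_3$ with $\lambda\in k$, which forces $\lambda\in k^\times$. I define
\[
\psi(x):=\lambda^{-1}g x\bar g^\trans .
\]
This map sends hermitian matrices to hermitian matrices. Because $\lambda\in k$, the map $\psi$ also preserves the condition of having scalars on the diagonal; I will check this directly from the formula, which matters only in characteristic $2$.

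From the hypothesis we get $\bar g^\trans g=\lambda p^{-1}$. A direct computation then gives
\[
U_{\psi(x)}\psi(y)=\lambda^{-3}gx(\bar g^\trans g)y(\bar g^\trans g)x\bar g^\trans=\lambda^{-1}g\,(xp^{-1}yp^{-1}x)\,\bar g^\trans=\psi\big(U^{(p^{-1})}_xy\big).
\]
Moreover $\psi(1^{(p^{-1})})=\psi(p)=\Eins_3$, so $\psi$ preserves units. Hence $\psi\colon J^{(p^{-1})}\to J$ is an isomorphism.

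\emph{Necessity.} Let $\psi\colon J^{(p^{-1})}\to J$ be an isomorphism. Viewed as a linear bijection of $J$ onto itself, $\psi$ is an autotopy, i.e.\ $\psi\in\Str(J)$, and it satisfies $\psi(p)=1$.

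I will then invoke the description of the structure group of $\Her_3(C)$ for $\dim C\le 2$. This is the Jacobson-type theorem that isomorphisms between Jordan algebras of hermitian elements in degree $3$ come from associative isomorphisms or anti-isomorphisms, combined with Skolem--Noether over $C$. It shows that every element of $\Str(J)$ has the form
\[
x\mapsto \mu\, g\,x^{\sigma}\,\bar g^\trans,
\]
where $g\in\GL_3(C)$, $\mu\in k^\times$, and $x^\sigma$ is either $x$ or $\bar x$, i.e.\ $x$ possibly twisted by the conjugation of $C$ (the transpose of a hermitian matrix is its conjugate).

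Since $p$ is diagonal with entries in $k$, we have $p^\sigma=p$. So $\Eins_3=\psi(p)=\mu gp\bar g^\trans$, which gives $gp\bar g^\trans=\mu^{-1}\Eins_3\in k\Eins_3$.

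The main obstacle is this description of $\Str(J)$, or equivalently of the isomorphisms $J^{(p^{-1})}\to J$, in arbitrary characteristic, including $C=k$ and $C=k\times k$ in characteristic $2$. My first attempt will be to cite the coordinatization and isomorphism theorems of \cite{MR4806163} for hermitian matrix algebras of degree $3$. If that route fails, the fallback is to reformulate via Prop.~\ref{p.JOMAST}. In that form, $\Her_3(C,\Gamma)\cong\Her_3(C)$ amounts to the statement that the $*$-algebras $(\mathrm{Mat}_3(C),\,x\mapsto\Gamma^{-1}\bar x^\trans\Gamma)$ and $(\mathrm{Mat}_3(C),\,x\mapsto\bar x^\trans)$ are isomorphic. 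That is in turn equivalent to the hermitian forms $\langle\gamma_1,\gamma_2,\gamma_3\rangle$ and $\langle1,1,1\rangle$ over $C$ being similar, which is exactly the matrix condition in the corollary.
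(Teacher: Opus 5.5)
Your proof is correct and follows the paper's route: Prop.~\ref{p.JOMAST} reduces the claim to finding an element of $\Str(J)$ that sends $p$ to $1_J$. Jacobson's description of the structure group of the full hermitian matrix algebra $\Her_3(C)$, $\dim C\le 2$, then gives the matrix condition. The paper cites that description for both directions, whereas you verify sufficiency by a direct computation.

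Your allowance for the twist $x\mapsto\bar x$ is more accurate than the paper's formulation. For $\dim C=2$ this automorphism lies in $\Str(J)$ but is not of the form $x\mapsto\gamma g x\bar g^\trans$. As you note, it is harmless here because $\bar p=p$.
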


\begin{proof}
By the proposition, $\Her_3(C,\Gamma) \cong J$ if and only if $J^{(p^{-1})} \cong J$, which happens if and only if some $\eta \in \Str(J)$ has $\eta(p) = 1_J$ \cite[Thm.~31.22~(d)]{MR4806163}. Since $J$, by the hypothesis on $C$, is the \emph{full} Jordan algebra of hermitian matrices with entries in $C$, the structure group of $J$ consists of the transformations $x \mapsto \gamma gx\bar g^\trans$, for $\gamma \in k^\times$, $g \in \GL_3(C)$ \cite[Thms.~6$\--$8]{MR407103}. The assertion follows.
\end{proof}

\begin{cor} \label{c.JOMACL}
Let $C$ be a composition $k$-algebra and $J := \Her_3(C)$. Up to isomorphism, the isotopes of $J$ are precisely of the form $\Her_3(C,\Gamma)$, for some diagonal matrix $\Gamma \in \GL_3(k)$.   
\end{cor}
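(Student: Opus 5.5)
Both inclusions follow from Prop.~\ref{p.JOMAST}, once we know that every isotope of $J$ is isomorphic to an isotope $J^{(q)}$ with $q$ diagonal.

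First, every $\Her_3(C,\Gamma)$ is an isotope of $J$. Given $\Gamma = \diag(\gamma_1,\gamma_2,\gamma_3)$, apply Prop.~\ref{p.JOMAST} with $p := \sum\gamma_ie_{ii}$. This gives $\Her_3(C,\Gamma)\cong J^{(p^{-1})}$, and $p^{-1}=\sum\gamma_i^{-1}e_{ii}\in J^\times$.

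Second, let $q\in J^\times$ be arbitrary; we must show that $J^{(q)}$ is isomorphic to some $\Her_3(C,\Gamma)$. The plan has three steps.

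\begin{enumerate}
\item Diagonalize $q$ by an element of the structure group. I would use the fact that $J=\Her_3(C)$ is reduced and that $J^{(q)}$ is again a proper reduced Freudenthal algebra. By \cite[41.1]{MR4806163}, $J^{(q)}\cong\Her_3(C',\Gamma)$ for some composition algebra $C'$ and diagonal $\Gamma$.
\item Show $C'\cong C$. The coordinate algebra is an isotopy invariant: for an elementary frame of $J^{(q)}$, the off-diagonal Peirce spaces carry norm forms similar to $n_C$. Alternatively, since $J^{(q)}_K\cong J_K$ over a splitting field, the coordinate algebras become isomorphic there, and the coordinate algebra is determined by the isotopy class \cite[\S\S37--41]{MR4806163}.
\item Conclude: $J^{(q)}\cong\Her_3(C,\Gamma)$, which is of the required form.
\end{enumerate}

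As a direct alternative to steps 1 and 2, I would try to find $\eta\in\Str(J)$ with $\eta(q)$ diagonal, since the structure group acts transitively on frames up to scaling. For any $\eta\in\Str(J)$, $J^{(q)}\cong J^{(\eta(q))}$ via $\eta$ (suitably adjusted by $U$-operators, \cite[Thm.~31.22]{MR4806163}). Concretely, when $q$ is invertible one can pick an elementary frame of $J^{(q)}$. Then one would transport it by an isomorphism to the standard one, using transitivity of $\Aut$ on elementary frames in reduced Freudenthal algebras.

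The main obstacle is step 2: showing that the coordinate algebra of the isotope is $C$ itself and not some other composition algebra. I expect to settle it with the coordinatization theorem: in $\Her_3(C',\Gamma)$, the Peirce $(1,2)$-space with the quadratic form $U_{u}e_{11}$-type expression recovers $n_{C'}$ up to a scalar. That space is the same vector space as in $J$, since the Peirce decompositions relative to $q^{-1}$-orthogonal idempotents are compatible under isotopy. Hence $n_{C'}$ is similar to $n_C$, and by \cite{MR2427530} (similar norms of composition algebras are isometric, being round) we get $C'\cong C$.
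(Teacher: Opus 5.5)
Your first inclusion (every $\Her_3(C,\Gamma)$ is an isotope, via Prop.~\ref{p.JOMAST}) and your Step~1 are fine. For Step~1, record that $J$ and $J^{(q)}$ have the same invertible elements. So $J^{(q)}$ is not a division algebra, hence it is reduced \cite[Thm.~39.6]{MR4806163}, and \cite[41.1]{MR4806163} applies.

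The gap is Step~2, which you rightly flag as the crux: the mechanism you propose for it does not work. An elementary frame $(c_1,c_2,c_3)$ of $J^{(q)}$ sums to $q^{-1}$, not to $1_J$. Its Peirce spaces $U^{(q)}_{c_i,c_j}J=U_{c_i,c_j}J$ have no reason to be Peirce spaces of a frame of $J$ unless $q$ has already been brought into diagonal form. That diagonalization is exactly the content of the corollary, so the argument is circular.

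Concretely, take $J=\Her_3(k)$ with $\ch(k)\ne 2$. The frames of $J^{(q)}$ correspond to bases of $k^3$ orthogonal for the bilinear form with Gram matrix $q$. Their Peirce spaces are Peirce spaces of a frame of $J$ only if such a basis is also orthogonal for the unit form, which is impossible when $q$ has irreducible characteristic polynomial.

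Your fallback is false as well: the automorphism group is not transitive on elementary frames. In $\Her_3(\IQ)$ the automorphisms are $x\mapsto oxo^\trans$ with $o$ orthogonal, so they preserve the square class of $v^\trans v$ attached to an elementary idempotent $vv^\trans/(v^\trans v)$. Hence the frames coming from the orthogonal bases $e_1,e_2,e_3$ and $(1,1,0),(1,-1,0),(0,0,1)$, with square classes $(1,1,1)$ versus $(2,2,1)$, are not conjugate. In any case, frames of $J^{(q)}$ and of $J$ live in different algebras, so comparing them needs Step~2 again. Passing to a splitting field says nothing about $C'$ over $k$.

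What closes Step~2 is a citation, not your argument. The isotopy invariance of $f_r(J)=n_C$ \cite[Thm.~41.8]{MR4806163}, recalled in \ref{ss.INMOTW}, gives $n_{C'}\cong n_C$, hence $C'\cong C$. With that, your proof is complete and takes a different route from the paper's.

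The paper quotes \cite[Thm.~40.10]{MR4806163} outright when $C$ is regular. In the remaining case $C=k$, $\ch(k)=2$, it uses Albert's theorem that a regular ternary symmetric bilinear form is diagonalizable, and then applies Prop.~\ref{p.JOMAST}. In this non-regular case your route needs no Step~2 at all: $\dim J^{(q)}=6$ forces $\dim C'=1$, i.e.\ $C'=k$. So your reduction genuinely simplifies that case. In the regular case it only trades the paper's black box for an invariance theorem of comparable depth.
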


\begin{proof}
If $C$ is regular, this is a special case of \cite[Thm.~40.10]{MR4806163}. Otherwise, \eqref{REDBLT} implies $C = k$ and $\ch(k) = 2$. Any $p \in J^\times$ may be viewed canonically as a ternary regular symmetric bilinear form. By a theorem of Albert \cite[Thm.~8]{MR1501952}, $p$ is either diagonalizable or alternating. But an alternating ternary symmetric bilinear form over $k$ cannot be regular. Hence $p$ is diagonalizable, and Prop.~\ref{p.JOMAST} can be applied to $\Gamma := p^{-1}$.
\end{proof}

Proper Freudenthal algebras that are not reduced are Jordan division algebras in the sense that all of their non-zero elements are invertible \cite[Thm.~39.6]{MR4806163}. Freudenthal division algebras exist only in dimensions $1,3,9,27$ \cite[Thm.~46.8]{MR4806163}. Examples may be found by means of the two Tits constructions \cite[Chap.~VII]{MR4806163}. The first Tits construction, which is the more easily accessible of the two, plays a particularly important role in the present investigation. Its main ingredients may be described as follows.

\subsection{The first Tits construction} \label{ss.FITICO} The input of the first Tits construction consists of a cubic alternative $k$-algebra $A$ and a scalar $\mu \in k$. The output is a cubic Jordan algebra $J = J(A,\mu)$ living on the direct sum $A \oplus Aj_1 \oplus Aj_2$ of three copies of $A$ as a vector space over $k$ under the cubic Jordan algebra structure with identity element, adjoint and norm given by
\begin{align}
1_J =\,\,&1_A = 1_A + 0\cdot j_1 + 0\cdot j_2, \notag \\
x^\sharp =\,\,& (x_0^\sharp - \mu x_1x_2) + (\mu x_2^\sharp - x_0x_1)j_1 + (x_1^\sharp - x_2x_0)j_2, \notag \\
\label{FITNOR} N_J(x) =\,\,&N_A(x_0) + \mu N_A(x_1) + \mu^2N_A(x_2) - \mu T_A(x_0x_1x_2)
\end{align}
for $x = x_0 + x_1j_1 + x_2j_2$, $x_0,x_1,x_2 \in A$. The bilinear trace of $J$ is then given by
\begin{align}
\label{FITBLT} T_J(x,y) =\,\,&T_A(x_0y_0) + \mu T_A(x_1y_2) + \mu T_A(x_2y_1),   
\end{align}
where $y = y_0 + y_1j_1 + y_2j_2$, $y_0,y_1,y_2 \in A$.

\begin{prop} \label{p.FITFRE}
For a cubic alternative $k$-algebra $A$ and $\mu \in k$, the following conditions are equivalent.
\begin{itemize}
\item [(i)] $J(A,\mu)$ is a regular Freudenthal algebra.

\item [(ii)] $J(A,\mu)$ is regular.

\item [(iii)] $A$ is regular and $\mu \in k^\times$.
\end{itemize}
\end{prop}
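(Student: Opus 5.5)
(i) $\Rightarrow$ (ii) is trivial. For (ii) $\Leftrightarrow$ (iii) I would use the explicit bilinear trace \eqref{FITBLT}. With respect to the decomposition $J = A \oplus Aj_1 \oplus Aj_2$, the form $T_J$ is an orthogonal sum of two pieces. The first is $T_A$ restricted to $A$. The second is the form on $Aj_1 \oplus Aj_2$ given by $(x_1j_1 + x_2j_2,\, y_1j_1 + y_2j_2) \mapsto \mu\big(T_A(x_1y_2) + T_A(x_2y_1)\big)$.

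\emph{If $\mu = 0$,} the second summand vanishes identically. Since $A \neq 0$, $T_J$ is then degenerate.

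\emph{If $\mu \neq 0$,} the second summand is $\mu$ times the hyperbolic-type form built from the pairing $(x_1,y_2) \mapsto T_A(x_1y_2)$ between two copies of $A$. It is regular exactly when this pairing is nondegenerate, that is, exactly when $T_A$ is regular. The first summand is regular under the same condition.

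So $T_J$ is regular if and only if $\mu \in k^\times$ and $T_A$ is a regular bilinear form, i.e.\ $A$ is regular. Finite-dimensionality of $J$ is equivalent to that of $A$, so this settles (ii) $\Leftrightarrow$ (iii).

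The substantive part is (iii) $\Rightarrow$ (i): given $A$ regular and $\mu \in k^\times$, show that $J(A,\mu)$ is a Freudenthal algebra. By the first step we already know $J$ is regular, and $J$ is a cubic Jordan algebra by the general theory of the first Tits construction \cite[Chap.~VII]{MR4806163}. Both regularity and the construction commute with base change, since $J(A,\mu)_K = J(A_K,\mu)$. Freudenthal algebras are cubic Jordan algebras that become simple (or are of the form $E^{(+)}$) after every extension, so it suffices to work over an algebraic closure $\bar k$.

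Over $\bar k$ I would classify regular cubic alternative algebras. They are $\bar k \times \bar k \times \bar k$, $\bar k \times \mathrm{Mat}_2(\bar k)$ in its cubic structure, $\bar k \times \mathrm{Zorn}(\bar k)$, and $\mathrm{Mat}_3(\bar k)$; here I take regularity to force the semisimple étale/composition shape. Moreover $\mu$ becomes a cube, and $J(A,\mu) \cong J(A,1)$ after scaling $j_1, j_2$. For each $A$ in the list, $J(A,1)$ can be identified with a split reduced algebra $\Her_3(C)$ of dimension $9$, $9$ ($\mathrm{Mat}_3^{(+)}$), $15$ or $27$, or with the split cubic étale case. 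Alternatively, I would cite the book's result that $J(A,\mu)$ is Freudenthal whenever the input is regular. Such $\Her_3(C)$ with $C$ split are simple, so $J$ is Freudenthal.

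I expect the main obstacle to be this last step. One option is to carry out the explicit identification in each split case. The other is to prove simplicity directly via regularity: show that any nonzero ideal of $J_{\bar k}$ is stable under multiplication by the $j$'s, hence contains a nonzero element of $A$, hence by regularity and simplicity-type properties of $A$ contains $1$. If the identification proves messy, I would fall back on directly invoking the relevant theorem of \cite[Chap.~VII]{MR4806163}.
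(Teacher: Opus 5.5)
Your overall route is the paper's. You get (ii)$\Rightarrow$(iii) from \eqref{FITBLT}; your orthogonal-decomposition argument is fine and also gives the converse, which the paper does not need. You get (iii)$\Rightarrow$(i) by passing to an algebraically closed base and running through the regular cubic alternative algebras there.

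The gap is in that case analysis, which is the only substantive step. The correct list over $\bar k$ (the paper takes it from Exc.~42.21 of the book) is: $A=\bar k$ with $\ch(k)\ne 3$; $A=\bar k\times C$ with $C$ \emph{any} regular composition algebra; and $A=\mathrm{Mat}_3(\bar k)$. Your list omits $C=\bar k$, which is regular whenever $\ch(k)\ne 2$. In that case $A=\bar k\times\bar k$ has norm $N_A(\alpha,\beta)=\alpha\beta^2$ and bilinear trace $\alpha\alpha'+2\beta\beta'$, so it is regular. Then $J(A,\mu)\cong\Her_3(\bar k)$ is a six-dimensional Freudenthal algebra, and your argument never treats it.

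Your dimension bookkeeping is also wrong:
\begin{itemize}
\item $J(\bar k^3,1)\cong\mathrm{Mat}_3(\bar k)^{(+)}$ has dimension $9$;
\item $J(\bar k\times\mathrm{Mat}_2(\bar k),1)$ has dimension $15$;
\item $J(\bar k\times\mathrm{Zorn}(\bar k),1)$ and $J(\mathrm{Mat}_3(\bar k),1)$ are both the split Albert algebra, of dimension $27$. The last one is not $\mathrm{Mat}_3^{(+)}$.
\end{itemize}
The uniform identification you need is $J(\bar k\times C,\mu)\cong\Her_3(C,\diag(-1,-1,1))\cong\Her_3(C)$, together with the fact that $J(\mathrm{Mat}_3(\bar k),\mu)$ is an Albert algebra.

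Neither fallback rescues this step. Proving simplicity via ``simplicity-type properties of $A$'' cannot work, since $\bar k\times C$ is not simple. Citing a book result that $J(A,\mu)$ is Freudenthal for regular input just cites the statement being proved.

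The reduction to $\bar k$ also needs more care than you give it. ``Freudenthal'' means either $J\cong E^{(+)}$ with $E$ cubic \'etale, or $J_K$ is simple for every $K/k$. For the simple branch your reduction works once you say why. Given $K/k$, pass to a compositum $L\supseteq K,\bar k$; a nonzero proper ideal of $J_K$ extends to one of $J_L$, so simplicity of $J_L$ gives simplicity of $J_K$. For $A=k$, however, $J_{\bar k}$ is split \'etale, not simple. The definition alone does not let you descend from $J_{\bar k}\cong E'^{(+)}$ to $J\cong E^{(+)}$. You can either invoke faithfully flat descent of the Freudenthal property (Cor.~39.32 of the book, as the paper does), or argue directly over $k$. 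The direct argument: $J(k,\mu)\cong E^{(+)}$ with $E=k[\bft]/(\bft^3-\mu)$ via $j_1\mapsto\bft$, $j_2\mapsto\bft^2$. This $E$ is \'etale because $\mu\ne 0$ and $\ch(k)\ne 3$, and the latter is forced by regularity of $A=k$, whose bilinear trace is $3\alpha\beta$.
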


\begin{proof}
(i)$\Rightarrow$(ii). Obvious.

(ii)$\Rightarrow$(iii). Apply \eqref{FITBLT}.

(iii)$\Rightarrow$(i). Put $J := J(A,\mu)$ and note that condition (iii) is stable under base change, while condition (i) is stable under faithfully flat descent \cite[Cor.~39.32]{MR4806163}. Hence we may assume if necessary that $k$ is algebraically closed. By \cite[Exc.~42.21]{MR4806163}, therefore, it remains to consider the following cases. $\vssp$ \\
$1^\circ.$ $A = k$, $\ch(k) \ne 3$. Then $E = k[\bft]/(\bft^3 - \mu)$ is cubic \'etale over $k$ and $J \cong E^{(+)}$ is a Freudenthal algebra. $\vssp$ \\
$2^\circ.$ $A = k \times C$, where $C$ is a regular composition $k$-algebra. Then \cite[Exc.~42.27~(a)]{MR4806163} implies $J \cong \Her_3(C,\Gamma)$, $\Gamma = \diag(-1,-1,1)$, and this is a reduced Freudenthal algebra. \\
$3^\circ.$ $A$ is a central simple associative $k$-algebra of degree $3$. Then $J$ is an Albert algebra \cite[Cor.~42.15]{MR4806163}.
\end{proof}

There are two important properties of the first Tits construction that will play a crucial role in the present investigation. The first of these may be found in \cite[Cor.~46.12]{MR4806163}.

\begin{prop} \label{p.DIFITI}
For a cubic alternative $k$-algebra $A$ and $\mu \in k$, the first Tits construction $J(A,\mu)$ is a cubic Jordan division algebra if and only if $A$ is a division algebra and $0 \ne \mu \notin N_A(A^\times)$. \hfill $\square$    
\end{prop}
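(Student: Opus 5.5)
The statement is cited from \cite[Cor.~46.12]{MR4806163}, but here is how I would prove it directly. The plan is to split into the two implications and, for the hard direction, reduce to the norm form via the standard criterion: a cubic Jordan algebra $J$ is a division algebra iff $N_J(x)\neq 0$ for all $0\ne x\in J$. This holds because $x$ is invertible iff $N_J(x)\ne 0$, using $x^\sharp\sharp = N_J(x)x$ and $x^{-1}=N_J(x)^{-1}x^\sharp$. I will use the formula \eqref{FITNOR} for $N_J$ throughout.

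\emph{Necessity.} Suppose $J=J(A,\mu)$ is a division algebra. Since $A\subseteq J$ is a subalgebra and $N_J|_A=N_A$, every nonzero $a\in A$ has $N_A(a)\ne0$. Hence $A$ is a cubic alternative division algebra.

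Next, $\mu\ne0$: if $\mu=0$, then $x=j_1$ (that is, $x_0=x_2=0$, $x_1=1$) gives $N_J(x)=\mu N_A(1)=0$, so $J$ is not division.

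Finally, suppose $\mu=N_A(a)$ for some $a\in A^\times$. I would look for a nonzero $x=x_0+x_1j_1$ with $x_2=0$, for which $N_J(x)=N_A(x_0)+\mu N_A(x_1)$. Taking $x_1=1$ and $x_0=-a$ gives $N_J(x)=-N_A(a)+\mu=0$ by homogeneity of degree $3$. So $\mu\notin N_A(A^\times)$.

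\emph{Sufficiency.} Assume $A$ is division and $0\ne\mu\notin N_A(A^\times)$, and let $x=x_0+x_1j_1+x_2j_2\ne0$ with $N_J(x)=0$; I aim for a contradiction. My first step would be to normalize using automorphisms or autotopies of $J$ that move $x$ into a simpler shape. Left multiplications by elements of $A$ do not obviously preserve $N_J$, so I would instead use the elementary cases.

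If $x_1=x_2=0$, then $N_A(x_0)=0$ forces $x_0=0$.

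If exactly one of $x_1,x_2$ is nonzero, the cross term $T_A(x_0x_1x_2)$ vanishes. For instance $x_2=0$ gives $N_A(x_0)=-\mu N_A(x_1)$. Since $N_A$ is multiplicative on $A$, whose elements are invertible, this yields $\mu=N_A(-x_0x_1^{-1})$. The case $x_1=0$ is symmetric and gives $\mu^2\in N_A(A^\times)$, hence $\mu=\mu^3/\mu^2\in N_A(A^\times)$, using $N_A(\mu)=\mu^3$.

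The general case, with $x_1,x_2$ both nonzero, is the main obstacle because of the term $T_A(x_0x_1x_2)$. Here I would use the adjoint identity $x^{\sharp\sharp}=N_J(x)x=0$ together with the formula for $x^\sharp$. If $x^\sharp\ne0$, I would replace $x$ by $y=x^\sharp$, which satisfies $y^\sharp=0$ and so is a nonzero rank-one element. For a rank-one element $y$ the components satisfy $y_0^\sharp=\mu y_1y_2$, $\mu y_2^\sharp=y_0y_1$ and $y_1^\sharp=y_2y_0$. If $y_1\ne0$, applying $N_A$ to the second and third relations gives $\mu^2N_A(y_2)^2=N_A(y_0)N_A(y_1)$ and $N_A(y_1)^2=N_A(y_2)N_A(y_0)$. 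From these, all of $y_0,y_1,y_2$ are invertible and one extracts $\mu$ as a norm, namely $\mu=N_A(y_1)^3/(N_A(y_0)N_A(y_2)\cdot\text{stuff})$, expressed as a cube quotient. Concretely, I expect $\mu=N_A(y_0y_1^{-1})$-type identities to fall out. If instead $x^\sharp=0$, apply the same argument to $x$ itself. In every case we reach a contradiction with $\mu\notin N_A(A^\times)$.
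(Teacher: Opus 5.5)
Your necessity argument is correct, and so is your handling of the cases $x_1=0$ or $x_2=0$. Reducing the general case to a nonzero $y$ with $y^\sharp=0$ via $x^{\sharp\sharp}=N_J(x)x$ is also the right move. The paper itself gives no proof (it only cites Cor.~46.12 of the book), so your argument has to stand on its own. It breaks at the last step, where you apply $N_A$ to the component relations.

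First, the exponent is wrong: $N_A(\mu y_2^\sharp)=\mu^3N_A(y_2)^2$, not $\mu^2N_A(y_2)^2$. Once this is corrected, the three scalar identities read
\[
N_A(y_0)^2=\mu^3N_A(y_1)N_A(y_2),\quad \mu^3N_A(y_2)^2=N_A(y_0)N_A(y_1),\quad N_A(y_1)^2=N_A(y_2)N_A(y_0).
\]
They involve $\mu$ only through $\mu^3=N_A(\mu 1_A)$, which is a norm in any case. The most they yield is $\mu^3=\bigl(N_A(y_0)N_A(y_1)^{-1}\bigr)^3$, i.e.\ $\mu$ up to a cube root of unity. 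When $k$ contains a primitive cube root of unity $\omega$, which need not be a norm from $A$, this gives no contradiction with $\mu\notin N_A(A^\times)$. So the ``cube quotient'' you expect cannot come out of norm considerations alone: passing to norms discards exactly the information you need. You also leave the case $y_1=0$ open.

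The missing idea is to work with the vector relations $y_0^\sharp=\mu y_1y_2$, $\mu y_2^\sharp=y_0y_1$, $y_1^\sharp=y_2y_0$ directly.

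First show that all $y_i$ are nonzero. If one component vanishes, these relations together with $\mu\neq0$ force the adjoints of the other two components to vanish. In the division algebra $A$ one has $z^\sharp=N_A(z)z^{-1}\neq0$ for $z\neq0$, so then $y=0$, a contradiction.

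Next, rewrite $y_1^\sharp=y_2y_0$ as $y_2y_0=N_A(y_1)y_1^{-1}$. The left inverse property of alternative algebras gives $y_0=N_A(y_1)\,y_2^{-1}y_1^{-1}$. The right inverse property then gives $y_0y_1=N_A(y_1)y_2^{-1}$.

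Finally, compare with $y_0y_1=\mu y_2^\sharp=\mu N_A(y_2)y_2^{-1}$. This yields $N_A(y_1)=\mu N_A(y_2)$, that is, $\mu=N_A(y_1y_2^{-1})\in N_A(A^\times)$, the desired contradiction. With this argument in place, your separate treatment of the cases $x_1=0$ or $x_2=0$ becomes unnecessary.
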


The second property we have in mind may be found in \cite[Thm.~IX.22]{MR0251099} for $\ch(k) \ne 2$ and in \cite[Thm.~8]{MR0271181} in general, see also \cite[Cor.~A4.5]{garibaldi2025solutionsexercisesbookalbert}.

\begin{thm} \label{t.KUMFIT}
Let $J$ be an Albert $k$-algebra and $A$ a central simple associative algebra of degree $3$ over $k$ such that $A^{(+)}$ is a subalgebra of $J$. Then there exists a $\lambda \in k^\times$ such that the inclusion $A^{(+)} \hookrightarrow J$  extends to an isomorphism from $J(A,\lambda)$ to $J$. \hfill $\square$  
\end{thm}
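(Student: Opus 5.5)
Since $A^{(+)}\subseteq J$ is a regular cubic Jordan subalgebra (the bilinear trace restricts to the reduced trace form of $A$, which is regular), we split off its orthogonal complement. Put $V := (A^{(+)})^\perp$ with respect to $T_J$; then $J = A^{(+)} \oplus V$, and $\dim V = 18$. The plan has three steps: turn $V$ into a module over $A$, identify its structure, and recover the first Tits construction multiplication.

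First, we make $V$ into a left $A$-module. For $a\in A$ and $v\in V$, set $a\cdot v := -a^\sharp\times v$ suitably normalized, or equivalently $a\cdot v := \bar a \times v$ with $\bar a := T_A(a)1-a$. Using the cubic Jordan identities in $J$, we check that this is an associative action with $1\cdot v=v$. The decisive identities are $x^\sharp\times(x\times y)=N(x)y+T(x^\sharp,y)x$ and its linearizations, applied with $x\in A$ and $y\in V$. Alternatively, we use the Peirce-type decomposition relative to a generic maximal subfield. This identification of $V$ as $A$-bimodule is the classical route of McCrimmon and Petersson–Racine.

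Second, we analyse the module structure. By a dimension count, and after passing to a splitting field, we get $V\cong A\oplus A$ as left $A$-modules, with $A$ acting on the first summand on the left and on the second via the opposite algebra. Concretely, we find $j_1\in V$ such that $V = Aj_1\oplus Aj_2$ with $j_2 := j_1^\sharp$ up to a scalar. We choose $j_1$ so that it generates a free rank-one $A$-submodule on which the action is by left multiplication; this is possible because $A$ is simple of degree $3$ and $V$ is a faithful module of dimension $2\dim A$.

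Third, we compute $N_J(j_1)=:\lambda$ and show that $\lambda\ne 0$, which follows from the regularity of $T_J$. We then verify that the adjoint formula of \S\ref{ss.FITICO} holds on the basis decomposition. By \eqref{UOPCUB}, norm, adjoint and trace determine the Jordan structure, so the map $J(A,\lambda)\to J$, $x_0+x_1j_1+x_2j_2\mapsto x_0 + x_1\cdot j_1 + x_2\cdot j_2$, is an isomorphism extending the inclusion.

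The main obstacle is the second step: producing $j_1$ with $j_1^\sharp\in Aj_2$ while the multiplication matches the Tits formulas exactly, rather than only up to a twist by an element of $A^\times$. If the naive choice fails, we would instead reduce to the case where $J$ is a first Tits construction in some other way. Concretely, by Galois descent over a cubic extension splitting $A$, $J$ becomes reduced with $A$ split, and there the statement follows from explicit matrix computations (\cite[Exc.~42.27]{MR4806163}). We would then descend the scalar $\lambda$, using that the isomorphism is unique up to the action of $A$-linear automorphisms, which form a torus-like group with trivial relevant cohomology.
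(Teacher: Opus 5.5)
\emph{There are genuine gaps.} Your skeleton is the natural one: split off $V:=(A^{(+)})^\perp$, write $V=Aj_1\oplus Aj_2$, and read off $\lambda=N_J(j_1)$. (The paper gives no proof here; it quotes the literature.) But the proposal breaks down from the first step on.

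\textbf{The module structure.} Neither of your formulas gives an $A$-module structure on $V$. Since $1_J\times v=T_J(v)1_J-v=-v$ for $v\in V$, you get $\bar a\times v=-T_A(a)v-a\times v$. Hence $1\cdot v=-2v$, and in $J(A,\lambda)$ this operator is $mj_1\mapsto-(\bar am)j_1$, which is not an associative action. The operator $a^\sharp\times(\cdot)$ is quadratic in $a$.

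The right operator is $\ell(a):=-a\times(\cdot)\vert_V$. Note that $A\times V\subseteq V$, because $T_J(a\times v,b)=T_J(a\times b,v)=0$ for $a,b\in A$. Even $\ell$ does \emph{not} make $V$ a left $A$-module. In $J(A,\lambda)$ one has $a\times(mj_1)=-(am)j_1$ and $a\times(mj_2)=-(ma)j_2$, so $\ell(a)$ is left multiplication on $Aj_1$ and right multiplication on $Aj_2$.

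So the first real task is a canonical splitting $V=V_1\oplus V_2$ on which $\ell$ is a homomorphism, resp.\ an anti-homomorphism, with $\dim V_1=\dim V_2=9$. One way: check (e.g.\ via the linearized Cayley--Hamilton equation) that $\ell\colon A^{(+)}\to\End_k(V)^{(+)}$ is a unital Jordan homomorphism. Then split $V$ by the central idempotents of the special universal envelope $A\times A^{\mathrm{op}}$ of $A^{(+)}$. Your Step 2 only asserts this outcome, and in self-contradictory form (``as left $A$-modules \dots\ via the opposite algebra'').

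\textbf{The core of the theorem.} Everything after the splitting is the actual content of the theorem, and it is only announced. You would need to show:
\begin{itemize}
\item $V_1^\sharp\subseteq V_2$, $V_2^\sharp\subseteq V_1$ and $V_1\times V_2\subseteq A$;
\item transport along $x\mapsto xj_1$, $x\mapsto xj_1^\sharp$ reproduces exactly the adjoint and norm formulas of \ref{ss.FITICO} with $\lambda=N_J(j_1)$;
\item $\lambda\neq0$. Regularity of $T_J$ alone does not obviously give this.
\end{itemize}

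\textbf{The fallback.} It does not close the gap, because it rests on a false claim. The automorphisms of $J(A,\lambda)$ fixing $A^{(+)}$ pointwise are $x_0+x_1j_1+x_2j_2\mapsto x_0+(x_1q)j_1+(q^{-1}x_2)j_2$ with $N_A(q)=1$. So the relevant group is $\mathrm{SL}_1(A)$, not a torus, and $H^1(k,\mathrm{SL}_1(A))\cong k^\times/N_A(A^\times)$ is nontrivial in general.

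It has to be nontrivial. If $A$ is a division algebra with $N_A(A^\times)\neq k^\times$, Prop.~\ref{p.DIFITI} shows that $J(A,\lambda)$, $\lambda\notin N_A(A^\times)$, is a division algebra containing $A^{(+)}$. It is therefore not isomorphic to the reduced algebra $J(A,1)$. The parameter $\lambda$ \emph{is} this cohomology class.

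A descent proof therefore needs the following:
\begin{itemize}
\item compute this $H^1$, and show that twisting the pair $(J(A,1),A^{(+)})$ by the class of $\lambda$ yields $J(A,\lambda)$ with its canonical embedding;
\item the split case in the strong form you want: every embedding $\mathrm{M}_3(k_s)^{(+)}\hookrightarrow J_{k_s}$ extends to an isomorphism from $J(\mathrm{M}_3(k_s),1)$, i.e.\ all such embeddings are conjugate. An abstract isomorphism is not enough, and the exercise you cite is used in the paper only for $A=k\times C$;
\item a Galois splitting field rather than a cubic one, since a maximal subfield of $A$ need not be Galois over $k$.
\end{itemize}
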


\subsection{The second Tits construction} \label{ss.SETICO} In its most general form, the second Tits construction doesn't show up in the present paper. Instead, it will be perfectly adequate to follow \cite[44.5]{MR4806163} and consider 
\begin{itemize}
\item an \emph{associative involutorial system} $\msB = (K,B,\tau)$
over $k$, consisting by definition of a composition $k$-algebra $K$ of dimension $r \in \{1,2\}$, a cubic associative $K$-algebra $B$ and a $K/k$-involution $\tau$ of $B$; and

\item an \emph{admissible scalar} for $\msB$, that is, a pair $(p,\mu) \in J_0^\times \times K^\times$ ($J_0 := H(\msB)$, the Jordan $k$-algebra of $\tau$-symmetric elements in $B$), satisfying $N_B(p) = n_K(\mu)$
\end{itemize}
as input. The output will then be a cubic Jordan $k$-algebra $J = J(\msB,p,\mu)$ living on the direct sum $J_0 \oplus Bj$ of $J_0$ and $B$ as a vector space over $k$ under the cubic Jordan algebra structure with identity element, adjoint and norm given by
\begin{align}
1_J =\,\,&1_B = 1_B + 0\cdot j, \notag \\
x^\sharp =\,\,&\big(x_0^\sharp - up\tau(u)\big) + \big(\bar\mu\tau(u^\sharp)p^{-1} - x_0u\big)j, \notag \\
\label{SETNOR} N_J(x) =\,\,&N_{J_0}(x_0) + \mu N_B(u) + \bar\mu\overline{N_B(u)} - T_{J_0}\big(x_0,up\tau(u)\big)
\end{align}
for $x = x_0 + uj$, $x_0 \in J_0$, $u \in B$. The bilinear trace of $J$ is then given by
\begin{align}
\label{SETBLT} T_J(x,y) =\,\,&T_{J_0}(x_0,y_0) + T_B\big(up\tau(v)\big) + T_B\big(vp\tau(u)\big),   
\end{align}
where $y = y_0 + vj$, $y_0 \in J_0$, $v \in B$. In this context, we refer to $\msB$, more specifically, as an associative involutorial system \emph{of the $r^{th}$ kind} and call $\Core(\msB) := K$ its core.

Though the second Tits construction is much more delicate to handle than the first, there are close connections between the two. For example, if $\msB$ as above is of the second kind, then $J_K \cong J(B,\mu)$ is a first Tits construction \cite[Cor.~44.20]{MR4806163}. Since $K$ is faithfully flat over $k$, this and Prop.~\ref{p.FITFRE} imply

\begin{prop}
Let $\msB = (K,B,\tau)$ be an associative involutorial system of the second kind over $k$ and $(p,\mu)$ an admissible scalar for $\msB$. The second Tits construction $J(\msB,p,\mu)$ is a regular Freudenthal algebra over $k$ if and only $B$ is regular over $K$. \hfill $\square$    
\end{prop}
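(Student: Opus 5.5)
The plan is to reduce to the first Tits construction by faithfully flat base change to the core $K$, and then apply Prop.~\ref{p.FITFRE}. Put $J := J(\msB,p,\mu)$. By \cite[Cor.~44.20]{MR4806163}, since $\msB$ is of the second kind, there is an isomorphism $J_K \cong J(B,\mu)$ of cubic Jordan $K$-algebras. Here $B$ is viewed as a cubic associative (hence alternative) $K$-algebra, and $\mu \in K^\times$ because $(p,\mu)$ is admissible. Since $K$ is a composition algebra of dimension $2$ over $k$, hence free of rank $2$, the extension $K/k$ is faithfully flat. Consequently $J_K$ is a commutative $K$-algebra in the sense of $\kalg$, and base change to $K$ is compatible with the cubic Jordan structure.

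Next, we transport the two properties across $K/k$. Regularity means finite dimensionality together with nondegeneracy of the bilinear trace. The bilinear trace of $J_K$ is the $K$-linear extension of $T_J$, and a bilinear form over a field is regular if and only if its extension to the free module over $K$ is regular, by taking the discriminant (determinant of the Gram matrix) and using that $k \to K$ is injective. Hence $J$ is regular if and only if $J_K$ is regular. Likewise, $J$ is a Freudenthal algebra if and only if $J_K$ is one, by the descent statement \cite[Cor.~39.32]{MR4806163} already used in the proof of Prop.~\ref{p.FITFRE}. The forward direction is just stability of the Freudenthal property under base change, granted that the definition extends over the ring $K$ as in \cite{MR4806163}. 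So $J$ is a regular Freudenthal algebra over $k$ if and only if $J_K \cong J(B,\mu)$ is a regular Freudenthal algebra over $K$.

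Finally, apply Prop.~\ref{p.FITFRE} over $K$, using the equivalence (i)$\Leftrightarrow$(iii). Since $\mu \in K^\times$ holds automatically, this gives that $J(B,\mu)$ is a regular Freudenthal $K$-algebra exactly when $B$ is regular over $K$. That is the assertion.

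The main obstacle is that $K$ need not be a field: it may be $k\times k$ in the split case. Prop.~\ref{p.FITFRE} is stated over a field, so it must either be read over an arbitrary base ring as in \cite[Chap.~VII]{MR4806163}, or the split case must be handled separately. In that case I would write $K = k \times k$, so that $B = B_1 \times B_2$ and $\mu = (\mu_1,\mu_2)$, and $J(B,\mu)$ decomposes componentwise as $J(B_1,\mu_1) \times J(B_2,\mu_2)$. The involution $\tau$ interchanges the factors, so $B_2 \cong B_1^{\mathrm{op}}$, and $B$ is regular if and only if $B_1$ is. The argument then reduces to Prop.~\ref{p.FITFRE} over $k$ applied to one factor.
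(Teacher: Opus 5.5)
Your proposal is correct and follows the same route as the paper: base change to the core $K$, use \cite[Cor.~44.20]{MR4806163} to identify $J_K \cong J(B,\mu)$, transport regularity and the Freudenthal property along the faithfully flat extension $K/k$, and apply Prop.~\ref{p.FITFRE}. Your separate treatment of the split case $K = k \times k$ is correct; it spells out a detail the paper leaves implicit, since Prop.~\ref{p.FITFRE} is stated only over a field.
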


The two important properties we have singled out for the first Tits construction have natural analogues for the second as well, which may be found in \cite[Thm.~46.10]{MR4806163} (resp., \cite[Exc.~IX.12.5]{MR0251099} and \cite[Thm.~9]{MR0271181}, see also \cite[Thm.~A4.4]{garibaldi2025solutionsexercisesbookalbert}).

\begin{prop} \label{p.DISETI}
If $\msB = (K,B,\tau)$ is an associative involutorial system of the second kind over $k$ and $(p,\mu)$ is an admissible scalar for $\msB$, then $J(\msB,p,\mu)$ is a cubic Jordan division $k$-algebra if and only if $H(\msB)$ is and $\mu \notin N_B(B^\times)$. \hfill $\square$
\end{prop}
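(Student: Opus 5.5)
This is the second-Tits analogue of Prop.~\ref{p.DIFITI}, and I would reduce it to that result and to \cite[Thm.~39.6]{MR4806163}. The tools are the isotopy-invariance of the division property and the base change to the core $K$. Put $J := J(\msB,p,\mu)$ and $J_0 := H(\msB)$, and note that $J_K \cong J(B,\mu)$ by \cite[Cor.~44.20]{MR4806163}.

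\emph{Necessity.} Assume $J$ is a division algebra. Since $J_0 = H(\msB)$ is a unital cubic subalgebra of $J$ with norm $N_J|_{J_0} = N_{J_0}$ by \eqref{SETNOR}, every nonzero $x_0 \in J_0$ has $N_{J_0}(x_0) = N_J(x_0) \neq 0$. Hence $x_0$ is invertible in $J$, and its inverse $x_0^{-1} = N_J(x_0)^{-1}x_0^\sharp$ lies in $J_0$. So $J_0$ is a division algebra.

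Now suppose, for contradiction, that $\mu = N_B(u)$ for some $u \in B^\times$. I would look for a nonzero element of $J$ of norm zero, which contradicts division by \cite[Thm.~39.6]{MR4806163}. The first candidate is $x := x_0 + vj$ with $v := u^{-1}$ (or $v = \tau(u)^{-1}$) and $x_0$ a suitable scalar multiple of an expression like $vp\tau(v)$, tuned so that \eqref{SETNOR} vanishes.

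A cleaner route is to compute the adjoint of $y := vj$, namely $y^\sharp = -vp\tau(v) + \bar\mu\tau(v^\sharp)p^{-1}j$, and show that it is zero or degenerate when $N_B(v) = \mu^{-1}$ and similar normalizations hold. Using $N_B(p) = n_K(\mu)$ and $v^\sharp = N_B(v)v^{-1}$, one checks $\bar\mu\tau(v^\sharp)p^{-1} = \bar\mu\,\overline{N_B(v)}\,\tau(v)^{-1}p^{-1}$. The aim is an element $1 - y$ or $1 + y$ of norm zero. I would compute $N_J(1 + cvj)$ via \eqref{SETNOR}:
\[
N_J(1+w j) = 1 + \mu N_B(w) + \bar\mu\overline{N_B(w)} - T_B\big(wp\tau(w)\big).
\]
This suggests a different tactic. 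Replace $(p,\mu)$ by an isomorphic datum: by the standard isomorphism $J(\msB,p,\mu) \cong J(\msB,\, wp\tau(w),\, N_B(w)\mu)$ for $w \in B^\times$, the hypothesis $\mu \in N_B(B^\times)$ lets us normalize to $\mu = 1$ and hence $N_B(p) = 1$. In that normalized situation I would exhibit the zero-norm element explicitly; $x = p^{\sharp}$-type choices such as $x = -p + 1\cdot j$ look promising, since $N_J(-p + j) = -N(p) + 1 + 1 - T_{J_0}(p,p)$ must be adjusted.

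This explicit construction is the main obstacle. If it resists, I would fall back on the reference \cite[Thm.~46.10]{MR4806163}, where this is proved.

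\emph{Sufficiency.} Assume $J_0$ is division and $\mu \notin N_B(B^\times)$. Write an arbitrary nonzero $x = x_0 + uj$. If $u = 0$, then $N_J(x) = N_{J_0}(x_0) \neq 0$. If $u \neq 0$, I would use the transitivity of the isotopy/norm-similarity argument as in the proof of Prop.~\ref{p.DIFITI}: an element of norm zero generates a proper nonzero ideal-like subspace (its $U$-image), and projecting to the $B$-component produces $w \in B^\times$ with $N_B(w) = \mu$. Alternatively, base change to $K$ and use Prop.~\ref{p.DIFITI} when $K$ is a field. Note that $N_B(B^\times)$ sits inside $K$, so the condition $\mu \notin N_B(B^\times)$ makes sense over $K$. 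The split case $K = k \times k$ would be handled by showing that $J_0 \cong B_1^{(+)}$ cannot be division unless it is, and that the condition then reduces to Prop.~\ref{p.DIFITI}.
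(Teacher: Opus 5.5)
The paper gives no argument of its own here; it imports the statement from \cite[Thm.~46.10]{MR4806163}. Judged as a proof, your proposal establishes only the easy implication ($J$ division $\Rightarrow$ $H(\msB)$ division). Both substantive implications remain open, and for one of them you fall back on the very statement being proved.

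\emph{Necessity.} For ``$\mu\in N_B(B^\times)\Rightarrow J$ not division'', your normalization to $\mu=1$, $N_B(p)=1$ via $J(\msB,p,\mu)\cong J(\msB,wp\tau(w),N_B(w)\mu)$ is fine. The map $x_0+uj\mapsto x_0+(uw^{-1})j$ respects unit, adjoint and norm. The search for an isotropic vector then stalls: by \eqref{SETNOR}, $N_J(-p+j)=1+T_{J_0}(p,p)$, which is not zero in general. The missing choice is $x_0=p^{-1}$, $u=1$. Then \eqref{SETNOR} gives $N_J(p^{-1}+j)=1+1+1-T_{J_0}(p^{-1},p)=0$, because $T(y,y^{-1})=N(y)^{-1}T(y,y^\sharp)=3$. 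Undoing the normalization: if $\mu=N_B(w)$, then $x:=\tau(w)p^{-1}w+w^{-1}j\neq 0$ has $N_{J_0}(x_0)=1$ and $up\tau(u)=x_0^{-1}$ for $u=w^{-1}$, hence $N_J(x)=1+1+1-3=0$.

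\emph{Sufficiency.} The converse is not proved at all. The sentence about an ``ideal-like subspace'' whose $B$-projection yields $w$ with $N_B(w)=\mu$ has no content. Your base-change alternative omits the one step that makes it work. To apply Prop.~\ref{p.DIFITI} over $K$ to $J_K\cong J(B,\mu)$, you need $B$ to be a division algebra, and nothing in your sketch derives this from $H(\msB)$ being division. When $K$ is a field, it follows from two facts:
\begin{itemize}
\item $H(\msB)_K\cong B^{(+)}$ \cite[Exc.~44.29(a)]{MR4806163};
\item a cubic Jordan division algebra stays division under a quadratic field extension (\cite[Cor.~46.3]{MR4806163}, the cubic-form analogue of Springer's theorem, also used in Lemma~\ref{l.UNRQUA}).
\end{itemize}
Alternatively, note directly that $H(\mathrm{M}_3(K),\tau)$ always contains nonzero elements of norm $0$. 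With this in hand, $J_K$ is division, hence so is $J\subseteq J_K$. The same corollary also gives necessity in one line: $J$ division $\Rightarrow$ $J_K\cong J(B,\mu)$ division $\Rightarrow$ $\mu\notin N_B(B^\times)$. So when $K$ is a field, no explicit isotropic vector is needed.

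\emph{The split case.} The case $K=k\times k$ must actually be carried out; your sentence about $J_0$ not being division ``unless it is'' does not do this. There $B=B_1\times B_1^{\mathrm{op}}$, $H(\msB)\cong B_1^{(+)}$ and $J\cong J(B_1,\mu_1)$ \cite[Cor.~44.21]{MR4806163}. Admissibility gives $\mu_1\mu_2=N_{B_1}(p_1)$, so $\mu\in N_B(B^\times)$ if and only if $\mu_1\in N_{B_1}(B_1^\times)$, and Prop.~\ref{p.DIFITI} then applies.
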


\begin{thm} \label{t.ETSETI}
Let $J$ be an Albert $k$-algebra and $(B,\tau)$ a central simple associative algebra of degree $3$ with unitary involution over $k$ such that $H(B,\tau)$ is a subalgebra of $J$. With $K := \Cent(B)$ abd $\msB := (K,B,\tau)$, there is an admissible scalar $(p,\mu)$ for $\msB$ such that the inclusion $H(B,\tau) \hookrightarrow J$ can be extended for an isomorphism from $J(B,\tau,p,\mu) := J(\msB,p,\mu)$ onto $J$. \hfill $\square$   
\end{thm}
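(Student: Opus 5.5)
The plan is to reduce to the first Tits construction by passing to the quadratic extension $K$, apply Thm.~\ref{t.KUMFIT} there, and descend back to $k$ by analysing how the Galois involution interacts with the resulting decomposition.

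Put $J_0 := H(B,\tau) \subseteq J$. Let $\iota$ be the nontrivial automorphism of $K/k$ and $\sigma := \Eins_J \otimes \iota$, the corresponding $\iota$-semilinear automorphism of $J_K$.

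First, extend scalars to $K$. Since $K \otimes_k K \cong K \times K$, we get $B_K \cong B \times B^{\mathrm{op}}$, with $\tau \otimes \iota$ acting as the exchange involution. Hence $(J_0)_K \cong B^{(+)}$, embedded as $b \mapsto (b, b^{\mathrm{op}})$, with $B$ now regarded as a central simple $K$-algebra of degree $3$. This puts us in the situation of Thm.~\ref{t.KUMFIT}: there is $\lambda \in K^\times$ and an isomorphism $\Phi\colon J(B,\lambda) \to J_K$ extending the inclusion $B^{(+)} \cong (J_0)_K \hookrightarrow J_K$.

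Second, transport $\sigma$ through $\Phi$ to get an $\iota$-semilinear automorphism $\sigma' := \Phi^{-1}\sigma\Phi$ of $J(B,\lambda)$. It stabilizes $B^{(+)}$ and acts there as the semilinear extension of $\tau$. Being an automorphism, $\sigma'$ also stabilizes the orthogonal complement $Bj_1 \oplus Bj_2$ of $B$ with respect to the bilinear trace \eqref{FITBLT}. The key structural step is to show that $\sigma'$ interchanges $Bj_1$ and $Bj_2$, and has the form
\[
\sigma'(uj_1) = \big(\tau(u)q\big)j_2,\qquad \sigma'(uj_2) = \big(\tau(u)q'\big)j_1
\]
for suitable invertible $q,q' \in B$.

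I expect this step to be the main obstacle. The approach I would try first is to use the $U$-operator formula \eqref{UOPCUB} and the adjoint of $J(B,\lambda)$ to identify $Bj_1$ and $Bj_2$ as the two irreducible $B$-bimodule-type summands under the products $x_0 \times (uj_i)$. A $\tau$-twisted semilinear map must swap them, because $\tau$ reverses multiplication. Comparing with the adjoint formula then pins down the form of $\sigma'$. The relations $\sigma'^2 = \Eins$ and $N_J \circ \sigma' = \iota \circ N_J$, read off from \eqref{FITNOR}, should give $q' = \tau(q)^{-1}$ up to a scalar, $\tau(q) = q$, and $\iota(\lambda)\lambda^{-1}$ expressed through $N_B(q)$.

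Third, compute the fixed points of $\sigma'$. They are $J_0 \oplus \{uj_1 + \sigma'(uj_1) : u \in B\}$, which we identify with $J_0 \oplus Bj$. Substituting into the first Tits adjoint and norm formulas should reproduce \eqref{SETNOR} with $p := q^{\pm1} \in J_0^\times$ (after a possible rescaling) and $\mu := \lambda$ or a scalar multiple, satisfying $N_B(p) = n_K(\mu)$. Since the fixed points of $\sigma$ are exactly $J$, this yields an isomorphism $J(B,\tau,p,\mu) \to J$ restricting to the identity on $H(B,\tau)$.

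Some care is needed with the normalization of $p$ and $\mu$ so that $(p,\mu)$ is genuinely admissible; I would adjust by the scalar freedom $j \mapsto cj$ to make the norms match. An alternative route, in case the descent bookkeeping gets messy, is to work directly over $k$: take $W := J_0^\perp$ and make it a free rank-one left $B$-module via $x_0 \times w$ for symmetric elements together with a $K$-action. One would then pick a generator $j$ with invertible associated $p$ and verify the formulas. There, however, the existence of such a $j$ over small fields becomes the obstacle.
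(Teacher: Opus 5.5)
Your descent argument works in outline, and it is a genuinely different route from the one behind this statement. The paper gives no proof, only citations. The machinery it uses in the analogous situation of Cor.~\ref{c.TRICHO} works directly over $k$ inside $J=H(B,\tau)\oplus H(B,\tau)^\perp$: Freudenthal pairs admit \'etale elements, and an \'etale element yields a Tits-process description. That is essentially your ``alternative route'', with the \'etale-element theorem supplying exactly the good generator of the complement whose existence over small fields worried you.

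Your route instead trades that existence problem for Thm.~\ref{t.KUMFIT} over $K$ plus an explicit Galois-descent computation, which is short and transparent. There are two caveats:
\begin{itemize}
\item Thm.~\ref{t.KUMFIT} is itself the split-centre case of the present statement; the solutions volume lists it as Cor.~A4.5, right after the Thm.~A4.4 cited here. To avoid circularity you must take it from an independent proof, such as Jacobson's Thm.~IX.22 or McCrimmon's Thm.~8.
\item Your argument needs $K$ to be a field. The case $K\cong k\times k$, $(B,\tau)\cong(A\times A^{\mathrm{op}},\varepsilon)$, must be handled separately. There $H(B,\tau)\cong A^{(+)}$, and the claim is Thm.~\ref{t.KUMFIT} combined with identifying $J(A,\lambda)$ as a second Tits construction over the split system.
\end{itemize}

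The guessed formulas in your key step need adjusting. With the paper's adjoint one has $x_0\times uj_1=-(x_0u)j_1$ and $x_0\times uj_2=-(ux_0)j_2$. Combining $\sigma'(x_0\times w)=\tau(x_0)\times\sigma'(w)$, $\sigma'^2=\mathrm{id}$, and the fact that $[B,B]$ has zero left and right annihilators in $B$ forces
\[
\sigma'(uj_1)=\big(q\,\tau(u)\big)j_2,\qquad \sigma'(uj_2)=\big(\tau(u)\,\tau(q)^{-1}\big)j_1
\]
with $q\in B^\times$. With $\tau(u)q$ in the first formula, as you wrote, compatibility with the module structure would force $q\in K$.

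Moreover, $q$ need not be symmetric. Applying $N\circ\sigma'=\iota\circ N$ to $uj_1$ gives $\iota(\lambda)=\lambda^2N_B(q)$. Applying $\sigma'\big((uj_1)^\sharp\big)=\sigma'(uj_1)^\sharp$ gives $\tau(q)=(\lambda N_B(q))^{-1}q$. Together these say exactly that $p:=\lambda q$ lies in $H(B,\tau)^\times$. They also show that $(p,\mu):=(\lambda q,\lambda)$ is admissible, since $N_B(p)=\lambda^3N_B(q)=\lambda\iota(\lambda)=n_K(\mu)$.

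No further rescaling is then needed. The map $x_0+uj\mapsto x_0+uj_1+(q\tau(u))j_2$ carries the unit, adjoint and norm \eqref{SETNOR} of $J(\msB,p,\mu)$ to those of $J(B,\lambda)$, and it lands bijectively in the fixed points of $\sigma'$. Composing with $\Phi$ therefore gives the required isomorphism onto $J$ extending the identity of $H(B,\tau)$.

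A minor slip: the involution of $B_K\cong B\times B^{\mathrm{op}}$ that becomes the exchange involution is $\tau\otimes\mathrm{id}_K$, not $\tau\otimes\iota$.
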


\section{Invariants} \label{s.INVAR} In this section, following various sources in the literature, we attach invariants to any proper Freudenthal algebra over $k$. The following concept will be of crucial importance.

\subsection{Reduced models and the coordinate dimension} \label{ss.REDMOD} Let $J$ be a proper Freudenthal $k$-algebra. Following \cite{MR1417849}, a \emph{reduced model} of $J$ is defined as a reduced Freudenthal algebra $J_{\red}$ over $k$ such that $J_K \cong (J_{\red})_K$ for all field extensions $K/k$ making the base change $J_K$ reduced over $K$. Reduced models of $J$ always exist and are unique up to isomorphism \cite[Thm.~2.8]{MR1417849}. But note that this result is non-trivial only in dimensions $9$ and $27$ because proper Freudenthal algebras in the complementary dimensions $6$ and $15$ are reduced, hence their own reduced models. For any proper Freudenthal $k$-algebra $J$, its reduced model $J_{\red}$ has  a certain coordinate composition algebra, unique up to isomorphism, whose dimension will henceforth be referred to as the \emph{coordinate dimension} of $J$. 

\subsection{The invariants $\bmod\,2$} \label{ss.INMOTW} Let $C$ be a composition $k$-algebra of dimension $2^r$, $0 \le r\le 3$, $\Gamma = \diag(\gamma_1,\gamma_2,\gamma_3) \in \GL_3(k)$ and $J = \Her_3(C,\Gamma)$ the corresponding proper reduced Freudenthal $k$-algebra. Then
\begin{align}
\label{BOTINV} f_r(J) := n_C    
\end{align}
is an $r$-Pfister quadratic form and an isotopy invariant of $J$ that classifies proper reduced Freudenthal algebras up to isotopy \cite[Thm.~41.8]{MR4806163}; it is called the \emph{$r$-invariant $\bmod\,2$ of $J$}.

If $J$ is regular, we put
\begin{align}
\label{TOPINV} f_{r+2}(J) := n_C \perp Q_J, \quad Q_J := \la \gamma_2\gamma_3,\gamma_3\gamma_1,\gamma_1\gamma_2\ra \otimes n_C,  
\end{align}
which is an $(r+2)$-Pfister quadratic form and also an invariant of $J$ \cite[Lemma~41.7]{MR4806163}, called its \emph{$(r+2)$-invariant $\bmod\,2$}. Note for $\gamma_3 = 1$ that
\begin{align}
\label{TOPPFI} f_{r+2}(J) = \dla -\gamma_1,-\gamma_2\dra \otimes n_C = \la 1,\gamma_1,\gamma_2,\gamma_1\gamma_2\ra \otimes n_C,  
\end{align}
which is indeed a Pfister quadratic form. For $\ch(k) \ne 2$, the invariants $\bmod\,2$ may be interpreted
Galois cohomologically (via the Arason invariant) as elements $f_{r+i}(J) \in H^{r+i}(k,\IZ/2)$, $i = 0,2$. In the notation of \cite[41.11]{MR4806163} we have $f_r(J) = \Pf_J$, $f_{r+2}(J) = \Pf_J^{+2}$, allowing us to conclude from \cite[Thm.~41.21]{MR4806163} that \emph{the invariants $\bmod\,2$ are classifying invariants for regular proper reduced Freudenthal algebras}.

If $J$ is an arbitrary regular proper Freudenthal $k$-algebra of coordinate dimnsion $2^r$ as in \ref{ss.REDMOD}, we recover the invariants $\bmod\,2$ by passing to the reduced model:
\begin{align}
\label{JAYRET} f_r(J) := f_r(J_{\red}), \quad f_{r+2}(J) := f_{r+2}(J_{\red}).  
\end{align}
But in this more general setting, the invariants $\bmod\,2$ are no longer classifying and, in fact, there is another important invariant, the invariant $\bmod\,3$, that has to be called in for help. We do so by discussing the dimensions $9$ and $27$ separately.

\subsection{Invariants: Freudenthal algebras of dimension $9$} \label{ss.INMOTH} There is a close connection between Freudenthal algebras of dimension $9$ and central simple associative algebras of degree $3$ with unitary involution. More specifically, we deduce from \cite[Exc.~55.12]{MR4806163} and \cite[Exc.~55.12]{garibaldi2025solutionsexercisesbookalbert} that a cubic Jordan $k$-algebra $J$ is a Freudenthal algebra of dimension $9$ if and only if there exists a central simple associative $k$-algebra $(B,\tau)$ 
of degree $3$ with unitary involution \cite[44.23, Exc.~9.33]{MR4806163} such that
\begin{align}
\label{FRENIN} J \cong H(B,\tau) = \{u \in B \mid \tau(u) = u\}
\end{align}
as cubic Jordan $k$-algebras; in this case, $(B,\tau)$ is unique up to isomorphism.

Realizing $J$ by means of \eqref{FRENIN}, the invariants $\bmod\,2$ as defined in \ref{ss.INMOTW} fit into this picture as follows.
\begin{itemize}
\item $f_1(J)$ is the isomorphism class of $K := \Cent(B)$, the centre of $B$, a quadratic \'etale $k$-algebra. $K$ being a $k$-form of the split quadratic \'etale $k \times k$, we may view $f_1(J)$ also as an element $f_1(J) \in H^1(k,\IZ/2)$. Since composition algebras over fields are classified by their norms, we can identify $f_1(J)$ equally well with the norm of $K$ and thus with a Pfister $1$-form over $k$, in agreement with \ref{ss.INMOTW}. 

\item $f_3(J)$ is the Pfister $3$-form determined by $\tau$ as defined in \cite[(19.4)]{MR1632779}; one may also regard it as an octonion algebra over $k$, written as $\Oct(\tau)$, a point of view adopted in \cite[Exc.~46.23~(a)]{MR4806163}. 
\end{itemize}
We define the \emph{invariant $\bmod\,3$ of $J$} as 
\begin{itemize}
\item the class $g_2(J)$ of $B$ in the Brauer group of $K$. If $K$ is a field, then we may view $g_2(J)$ as an element $g_2(J) \in H^2(k,\bf{\mu}_{[K]})$ \cite[(30.16)]{MR1632779}.
\end{itemize}
According to \cite[Thm.~(30.21)]{MR1632779} and \cite[Thm.~2.4]{MR2063796}, $f_1,f_3,g_2$ are classifying invariants of $9$-dimensional Freudenthal algebras $J$ over $k$. Note that $J$ is a division algebra if and only if $g_2(J) \ne 0$..

\subsection{Isotopes and distinguished involutions} \label{ss.ISDINV} If $J$ is a Freudenthal $k$-algebra of dimension $9$, then so are all its isotopes. More precisely,  if $J \cong H(B,\tau)$ for some central simple associative $k$-algebra of degree $3$ with unitary involution, then $J^{(p)} \cong H(B,\tau^p)$ for any invertible element $p \in J$ \cite[31.15]{MR4806163}, where $\tau^p$, $u \mapsto p^{-1}\tau(u)p$ is the $p$-twist of $\tau$ \cite[10.8]{MR4806163}. It follows that $f_1,g_2$ are \emph{isotopy invariants} of nine-dimensional Freudenthal algebras, i.e., remain unaffected by passing to isotopes. Setting $K := \Cent(B)$, we also recall that the assignment $p \mapsto \tau^p$ gives a surjection from $J^\times$ to the set of $K/k$-involutions on $B$ \cite[Thm.~8.7.4]{MR770063}. Also, the $K/k$-involutions of $B$ are classified by the invariant $f_3$ (\cite[Thm.~(19.6)]{MR1632779}, \cite[Thm.~2.4]{MR2063796}).

A $K/k$-involution $\tau$ of $B$ is said to be \emph{distinguished} if the $3$-Pfister quadratic form $f_3(J)$ is hyperbolic (\cite[(19.8)]{MR1632779}, \cite[2.6]{MR2063796}), equivalently, the corresponding octonion algebra is split. Distinguished $K/k$-involutions always exist on $B$ provided $B$ admits $K/k$-involutions at all (\cite[(19.12)]{MR1632779}, \cite[Thm.~2.10]{MR2063796}). Hence the results assembled above yield the following conclusion.

\begin{prop} \label{p.HOMDIS}
Let $(B,\tau)$ be a central simple associative $k$-algebra of degree $3$ with unitary involution and put $K := \Cent(B)$. Then the following conditions are equivalent.
\begin{itemize}
    \item[(i)] The nine-dimensional Freudenthal $k$-algebra $H(B,\tau)$ is homogeneous. 	
\item[(ii)] All $K/k$-involutions of $B$ are conjugate.  
\item[(iii)] All $K/k$-involutions of $B$ are distinguished.
\item[(iv)] $H(B,\tau)^{\times}=k^{\times}\{b\tau(b)|b\in B^{\times}\}$.
\item[(v)] Every Albert $k$-algebra containing $H(B,\tau)$ as a subalgebra is a first Tits construction.
\item[(vi)] Every reduced Albert $k$-algebra containing $H(B,\tau)$ as a subalgebra is split.

\end{itemize}
\end{prop}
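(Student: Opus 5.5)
The plan is to prove (i)$\Leftrightarrow$(ii)$\Leftrightarrow$(iii)$\Leftrightarrow$(iv) from the facts gathered in \ref{ss.ISDINV}, and then to attach (v) and (vi) using the Tits-construction results of Section~\ref{s.REDFRA}. Throughout we put $J := H(B,\tau)$ and use $J^{(p)} \cong H(B,\tau^p)$, where $p \mapsto \tau^p$ maps $J^\times$ onto the set of $K/k$-involutions of $B$.

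\emph{(i)$\Leftrightarrow$(ii).} Since $(B,\tau)$ is unique up to isomorphism by \eqref{FRENIN}, an isomorphism $H(B,\tau^p) \cong H(B,\tau)$ is the same as an isomorphism $(B,\tau^p) \cong (B,\tau)$ of algebras with involution. Because $B$ is central simple over $K$, such an isomorphism is, up to composing with an automorphism, a $K$-linear or a $\iota$-semilinear automorphism of $B$. The semilinear case needs care. I would handle it by noting that $J$ determines $(B,\tau)$ together with its $k$-structure, so an isomorphism of $k$-algebras with involution $(B,\tau^p) \to (B,\tau)$ is automatically a $k$-automorphism of $B$ conjugating $\tau^p$ to $\tau$. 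Then ``conjugate'' in (ii) should be read as conjugate by $\mathrm{Aut}_k(B)$, and (i) says exactly that every $\tau^p$ is conjugate to $\tau$. Since every $K/k$-involution has the form $\tau^p$, this is (ii).

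\emph{(ii)$\Leftrightarrow$(iii).} Conjugate involutions have the same $f_3$, because $f_3$ is an invariant of $H(B,\cdot)$. Distinguished involutions exist, so (ii) forces $\tau$, and hence every involution, to be distinguished. Conversely, the involutions are classified by $f_3$, so if all of them have hyperbolic $f_3$ they are all isomorphic.

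\emph{(ii)$\Leftrightarrow$(iv).} By Skolem--Noether, $\tau^p \cong \tau$ via an inner automorphism $\mathrm{Int}(b)$ exactly when $\tau^p = \mathrm{Int}(b)^{-1}\tau\,\mathrm{Int}(b)$. This unwinds to $p^{-1}\tau(u)p = b^{-1}\tau(b)\tau(u)\tau(b)^{-1}b$ for all $u$, that is, $\tau(b)p \in K^\times$, say $\tau(b)p = \lambda$. Since both $\tau(b)$ and $p$ are invertible, this gives $p \in K^\times\{\tau(b)^{-1}\}$. Using symmetry of $p$, I would then normalize to $p = \alpha\, c\tau(c)$ with $\alpha \in k^\times$ and $c \in B^\times$; a small Hilbert~90-type adjustment handles the passage from $K^\times$ to $k^\times$. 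The non-inner (semilinear) automorphisms require an extra argument, or else I would restrict attention to $K$-linear conjugacy, which suffices by the uniqueness statement.

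\emph{(v), (vi).} Let $A$ be an Albert algebra containing $J$. By Thm.~\ref{t.ETSETI}, $A \cong J(B,\tau,p,\mu)$, and every admissible $(p,\mu)$ yields such an algebra. We have $f_3(A) = f_3(H(B,\tau^p))$, and $A$ is a first Tits construction iff $f_3(A) = 0$ (standard: the Petersson--Racine characterization). Hence (v) says $f_3(\tau^p)$ is hyperbolic for all $p$ that occur. The $p$ occurring in admissible pairs are those with $N_B(p) \in n_K(K^\times)$; since $n_K$ surjects after scaling by $k^\times$ (which changes $N_B$ by cubes), every isotope class occurs. So (v)$\Leftrightarrow$(iii). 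For (vi), a reduced Albert algebra is split iff $f_3 = 0$, because $g_3 = 0$ in the reduced case. This gives (v)$\Rightarrow$(vi). For the converse, take any $p$, and choose $\mu$ with $N_B(p) = n_K(\mu)$ and $\mu \in N_B(B^\times)$, using the scalar freedom; then $A$ is reduced by Prop.~\ref{p.DISETI} and has $f_3 = f_3(\tau^p)$.

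The main obstacle I expect is this (vi)$\Rightarrow$(iii) step: securing, for an arbitrary $p$, an admissible $\mu$ lying in $N_B(B^\times)$.
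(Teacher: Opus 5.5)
Your outline is essentially correct and mostly follows the paper: (i)$\Leftrightarrow$(ii)$\Leftrightarrow$(iii) from \ref{ss.ISDINV}, and (iii)$\Rightarrow$(v) via Thm.~\ref{t.ETSETI} together with $f_3(J(B,\tau,p,\mu)) = f_3(H(B,\tau^{p^{-1}}))$. The sign of the exponent is harmless, since $\tau^{p^{-1}}$ and $\tau^p$ are conjugate.

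The step you flag as the main obstacle is not one; the scalar freedom you mention settles it. Given $p \in H(B,\tau)^\times$, put $\beta := N_B(p) \in k^\times$ and take $(p',\mu) := (\beta^{-1}p,\beta^{-1})$. Then:
\begin{itemize}
\item[(a)] $N_B(p') = \beta^{-2} = n_K(\beta^{-1})$, so the pair is admissible;
\item[(b)] $\tau^{p'} = \tau^p$;
\item[(c)] $\mu = N_B(p^{-1}) \in N_B(B^\times)$, so $J(B,\tau,p',\mu)$ is a reduced Albert algebra containing $H(B,\tau)$ by Prop.~\ref{p.DISETI}.
\end{itemize}
The paper instead first treats symmetric $u$ of norm $1$ with $\mu = 1$. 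It then reduces the general case to this one via $w := N_B(u)^{-1}u^3$, which has norm $1$ and satisfies $J_0^{(w)} \cong J_0^{(u)}$. Your normalization avoids that isotope comparison. The same pair also gives your direct (v)$\Rightarrow$(iii), whereas the paper closes the cycle (iii)$\Rightarrow$(v)$\Rightarrow$(vi)$\Rightarrow$(iii).

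Two points need repair.

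First, your Skolem--Noether unwinding in (ii)$\Leftrightarrow$(iv) is wrong. Correctly, $\mathrm{Int}(b)^{-1}\circ\tau\circ\mathrm{Int}(b) = \tau^{\tau(b)b}$, so $\tau^p$ is inner-conjugate to $\tau$ if and only if $p \in K^\times\,\tau(b)b$. Since $p$ and $\tau(b)b$ are both $\tau$-symmetric, the $K^\times$-factor is fixed by the conjugation of $K$, hence lies in $k^\times$. So no Hilbert~90 step is needed, and your intermediate claim $\tau(b)p \in K^\times$ does not follow from your formula. The paper sidesteps both this and your semilinear worry by proving (i)$\Leftrightarrow$(iv) directly. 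By Jacobson's description, $\Str(H(B,\tau))$ consists of the maps $x \mapsto \lambda bx\tau(b)$, so $k^\times\{b\tau(b)\}$ is exactly the $\Str$-orbit of $1_B$. Homogeneity means this orbit is all of $H(B,\tau)^\times$.

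Second, for (v)$\Rightarrow$(vi) you should not justify ``reduced with $f_3 = 0$ implies split'' by $g_3 = 0$. That route would require classification of Albert algebras by their invariants, which is open, and $g_3$ needs $\ch(k) \ne 3$. Use instead that reduced Albert algebras are classified by $f_3, f_5$ and that $f_5$ is a multiple of $f_3$. Alternatively, cite Cor.~4.2 of \cite{MR734841}, as the paper does.
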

\begin{proof} The equivalence of (i), (ii) and (iii) is clear from the above. Setting $J_0 := H(B,\tau)$, it remains to settle the following implications. 

(i)$\Leftrightarrow$(iv). By \cite[Thms.~6,7]{MR407103}, the structure group of $J_0$ consists of the transformations $x \mapsto \lambda bx\tau(b)$ for some $\lambda \in k^\times$, $b \in B^\times$. Hence (iv) is equivalent to the orbit of $1_B$ under the action of the structure group being all of $J_0^\times$, hence to $J_0$ being homogeneous.

(iii)$\Rightarrow$(v). Assume (iii) and let $A$ be an Albert algebra over $k$ containing $H(B,\tau)$ as a subalgebra. Then, by Thm. \ref{t.ETSETI}, we can write $A=J(B,\tau, u, \mu)$ for a suitable admissible scalar $(u,\mu)$. Now $f_3(A)=n_C$, where $C$ is the octonion algebra of $H(B,\tau^v)$, $v := u^{-1}$ \cite[Thm.~1.8]{MR1337184}. 
By (iii), $\tau^v$ is distinguished, so $C$ must be split, forcing $A$ to be a first Tits construction.


(v)$\Rightarrow$(vi). Every reduced Albert algebra that is a first Tits construction is split \cite[Cor.~4.2]{MR734841}.

(vi)$\Rightarrow$(iii). Assume $u \in B$ has norm $1$. Then $A := J(B,\tau,v,1)$, $v = u^{-1}$, by Prop.~\ref{p.DIFITI} is a reduced Albert algebra containing $H(B,\tau)$ as a subalgebra. By (vi), therefore, $A$ is split. Hence so is its octonion algebra, which agrees with the octonion algebra of $\tau^u$. Thus $\tau^u$ is distinguished. If $u \in B^\times$ is arbitrary, then $w := N_B(u)^{-1}u^3 = N_B(u)^{-1}U_uu$ has norm $1$ and $J_0^{(w)} \cong J_0^{(u)}$. Thus $\tau^w$ is distinguished and conjugate to $\tau^u$, forcing $\tau^u$ to be distinguished as well.  
\end{proof}
This proposition turns out to be particularly useful in connection with the following result, which may be found in \cite[Thm.~(19.14)]{MR1632779} and \cite[Thm.~3.1]{MR2063796}.

\begin{thm} \label{t.CHADIS}
Let $(B,\tau)$ be a central simple associative $k$-algebra of degree $3$ with unitary involution. The involution $\tau$ is distinguished if and only if  the Freudenthal $k$-algebra $H(B,\tau)$ is a first Tits construction: there exist a cubic \'etale $k$-algebra $E$ and a scalar $\mu \in k^\times$ such that $H(B,\tau) \cong J(E,\mu)$. \hfill $\square$	
\end{thm}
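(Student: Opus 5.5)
The plan is to translate both sides into invariants of $J := H(B,\tau)$, with $K := \Cent(B)$, and to use the classification of nine-dimensional Freudenthal algebras by $f_1, f_3, g_2$ recalled in \ref{ss.INMOTH}. Distinguishedness of $\tau$ means exactly that $f_3(J)$ is hyperbolic. So the task is to show that $f_3(J)$ is hyperbolic if and only if $J \cong J(E,\mu)$ for some cubic \'etale $E$ and some $\mu \in k^\times$. By Prop.~\ref{p.FITFRE}, such a $J(E,\mu)$ is a regular Freudenthal algebra, and its dimension is $9$.

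For the direction ``first Tits construction $\Rightarrow$ distinguished'', I would compute $f_3$ of $J(E,\mu)$ on its reduced model. Pass to a field extension $L/k$ over which $J(E,\mu)$ becomes reduced; by definition of the reduced model, $(J_{\red})_L \cong J(E,\mu)_L = J(E_L,\mu)$. Over a field where the norm of $E$ attains $\mu$, or where $E$ splits, case $2^\circ$ of the proof of Prop.~\ref{p.FITFRE} shows that $J(E_L,\mu)$ is of the form $\Her_3(C,\diag(-1,-1,1))$. By \eqref{TOPPFI}, its $3$-invariant is $\dla 1,1\dra\otimes n_C$, and this is hyperbolic whenever $n_C$ is isotropic. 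However, this only gives hyperbolicity over $L$, and I need it over $k$. A cleaner route is therefore to construct the reduced model of $J(E,\mu)$ directly. Replacing $\mu$ by $1$ keeps the algebra isomorphic over every $L$ that makes it reduced, which is the usual argument that reduced first Tits constructions satisfy $J(E,\mu)\cong J(E,1)$ once $\mu \in N_E(E^\times)$. So $J_{\red} \cong J(E,1)$, which is an explicit $\Her_3(C,\diag(-1,-1,1))$ with $C$ the quadratic \'etale algebra attached to $E$ (the discriminant algebra). Then $f_3 = \dla 1,1\dra \otimes n_C$, and this is hyperbolic because $\dla 1,1\dra$ is itself hyperbolic ($\la 1,-1\ra$ is a factor).

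For the converse, ``distinguished $\Rightarrow$ first Tits construction'', I would produce a first Tits construction with the same invariants $f_1, g_2$ and then invoke the classification. A first attempt is to take a cubic \'etale subalgebra $E \subset J$ and $\mu$ so that $J(E,\mu)$ has centre of its $B$ equal to $K$ and Brauer class $[B]$. Concretely, I would take $E$ to be a cubic \'etale algebra whose discriminant algebra is $K$, chosen so that $E\otimes K$ embeds in $B$, for instance $E = H(L,\tau|_L)$ for a maximal $\tau$-stable cubic \'etale $K$-subalgebra $L$ of $B$ of the right kind. Then, via $J(E,\mu)_K \cong J(E_K,\mu)$, I would view $J(E,\mu)$ as $H$ of the cyclic-type algebra $(E\otimes K, \mu)$ with its natural involution. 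The parameter $\mu$ would be chosen so that the class of this cyclic algebra in $\mathrm{Br}(K)$ equals $[B]$. This requires $B$ to contain $E\otimes K$ as a maximal subfield with $\tau$ acting suitably, which is a Kummer/cyclic-type description of degree-3 unitary algebras. Once $f_1$ and $g_2$ match, the first direction shows that $f_3(J(E,\mu))$ is hyperbolic, which equals $f_3(J)$ by hypothesis. The classification theorem then gives $J \cong J(E,\mu)$.

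I expect the main obstacle to be this last construction: finding $E$ and $\mu$ over $k$ (not just over $K$) that realize the Brauer class $[B]$. This amounts to showing that every degree-3 algebra with unitary involution contains a $\tau$-stable maximal subfield of the form $E\otimes K$ on which $B$ is cyclic over $E\otimes K$ in a way compatible with $\tau$. What I would try first is a generic-element argument: choose a $\tau$-symmetric element whose $K$-algebra $K[x]$ is a field with $\tau|_{K[x]}$ nontrivial, and then use Wedderburn's theorem that degree-3 algebras are cyclic, together with the descent of the cyclic structure along $\tau$. In positive characteristic $3$ the Kummer description fails, and a separate Artin–Schreier-type treatment would be needed there.
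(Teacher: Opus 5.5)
The converse direction has a genuine gap, and it sits exactly where the content of Theorem~\ref{t.CHADIS} lies.

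You invoke the hypothesis that $\tau$ is distinguished only at the very end, to match $f_3$. So your construction of $E$ and $\mu$ must work for an arbitrary $(B,\tau)$. The statement you reduce it to is false in general. That statement is that $B$ contains a $\tau$-stable maximal \'etale subalgebra $M$ whose $\tau$-fixed part $E=M\cap H(B,\tau)$ has $\Delta(E)\cong K$, for instance via $E=H(L,\tau|_L)$ or a generic $\tau$-symmetric element.

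Counterexample: take $k=\mathbb{R}$, $B=M_3(\mathbb{C})$, $\tau(x)=\bar x^\trans$. Every element of $H(B,\tau)$ is a hermitian matrix with real eigenvalues. Hence every cubic \'etale subalgebra of $H(B,\tau)$ is $\cong\mathbb{R}^3$ and has split discriminant. This $\tau$ is not distinguished, so the theorem is not contradicted. But a construction that ignores the hypothesis would have to succeed here as well.

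For the same reason, a generic symmetric $x$ gives no control over $\Delta(k[x])$. A Wedderburn cyclic maximal subfield of $B$ over $K$ need have nothing to do with $\tau$. The ``descent of the cyclic structure along $\tau$'' is the theorem itself, not a technicality.

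What is missing is a genuine existence result, in one of two forms:
\begin{itemize}
\item[(a)] hyperbolicity of $f_3(J)$ forces some $E\subseteq H(B,\tau)$ with $\Delta(E)\cong K$; or
\item[(b)] to keep your match-the-invariants architecture, $B$ carries \emph{some} unitary involution $\tau'$ with such an $E\subseteq H(B,\tau')$.
\end{itemize}
The mere existence of distinguished involutions recalled in \ref{ss.ISDINV} does not yield (b) without the present theorem. This existence statement is the substance of the results the paper cites.

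Once such an $E$ lies in $H(B,\tau)$, the rest is direct and needs no classification:
\begin{itemize}
\item[(1)] $M:=E\otimes K$ is cyclic over $K$, with a generator $\rho$ satisfying $\tau\rho\tau=\rho^{-1}$ on $M$.
\item[(2)] Skolem--Noether gives $z\in B^\times$ with $zaz^{-1}=\rho(a)$ for all $a\in M$.
\item[(3)] One finds $\tau(z)=cz$ with $c\in M^\times$ and $c\,\rho\tau(c)=1$. Hilbert~90 for the involution $\rho\tau$ of $M$ then lets you normalize $z$ so that $\tau(z)=z$.
\item[(4)] Then $\mu:=z^3$ lies in $K\cap H(B,\tau)=k$, and $H(B,\tau)\cong J(E,\mu)$.
\end{itemize}
No Kummer theory enters, so characteristic $3$ needs no separate treatment at this step.

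There is also a smaller gap in the forward direction. Your reduction to the reduced model $J(E,1)$ is fine: if $J(E_L,\mu)$ is reduced, then $\mu\in N_{E_L}(E_L^\times)$ by Prop.~\ref{p.DIFITI}, since norms from a non-field cubic \'etale algebra are surjective. But when $E$ is a field, nothing you cite shows $J(E,1)\cong\Her_3(\Delta(E),\diag(-1,-1,1))$. Case $2^\circ$ of Prop.~\ref{p.FITFRE} only covers $E\cong k\times C$.

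Your abandoned first idea closes this at once. Base change to $L:=E$. There $E\otimes_kE\cong E\times Q$ with $Q$ quadratic \'etale over $E$, so $f_3(J(E,\mu))_E=\dla 1,1\dra\otimes n_Q$ is hyperbolic. Springer's theorem for the odd-degree extension $E/k$ then gives isotropy over $k$, and hence hyperbolicity, of the Pfister form $f_3(J(E,\mu))$.
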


\subsection{Invariants: Albert algebras} \label{ss.INVALB} In analogy to Freudenthal algebras of dimension $9$, Albert algebras, that is, Freudenthal algebras of dimension $27$, have three important invariants that allow  cohomological interpretations if certain low characteristics are excluded. Let $A$ be an Albert $k$-algebra.
\begin{itemize}
\item  $f_3(A)$ and $f_5(A)$ are the invariants $\bmod\,2$ of $A$ as defined in \ref{ss.INMOTW} for $r = 3$. As in the case of nine-dimensional Freudenthal algebras, $f_3(A)$ can be interpreted as an octonion algebra, denoted by $\Oct(A)$ and called the \emph{octonion algebra of $A$}. We recall from \cite[Prop.~(40.5)]{MR1632779} and \cite[Thm.~4.10]{MR2063796} that $\Oct(A)$ is split if and only if $A$ is a first Tits construction.

\item $g_3(A)$, the \emph{invariant $\bmod\,3$ of $A$}, belongs to $H^3(k,\IZ/3)$ provided $\ch(k) \ne 3$. For a precise definition of this invariant, see \cite[p.~537]{MR1632779}, \cite{MR99j:17045,PR:char3}. A particularly important property of $g_3$ is that it characterizes division algebras: \emph{$A$ is an Albert division algebra if and only if $g_3(A) \ne 0$.} \cite[Thm.~40.8]{MR1632779}, \cite[Thm.~3.2]{MR99j:17045}, \cite[Thm.~7]{PR:char3}
\end{itemize}
We note that $f_3$ and $g_3$ are isotopy invariants, while $f_5$ is not \cite[Cor.~60]{MR3916096}. If $k$ has characteristic not $2$ or $3$, then $f_3,f_5,g_3$ are cohomological invariants of Albert algebras and, basically, there are no others \cite[Prop.~8.6]{G:lens}, \cite[Thm.~22.5]{GMS}. It is an open question raised by Serre \cite{MR1321649} whether Albert algebras are classified by their invariants.

\section{Subalgebras arising from the first Tits construction} \label{s.SUFITI} In this section, we study nine-dimensional subalgebras of Albert division algebras that arise from the first Tits construction. For convenience, we formalize the situation as follows.

\subsection{The first Tits construction at a separable cubic subfield} \label{ss.FITSEP} Let $A$ be an Albert division $k$-algebra and $L \subseteq A$ a separable cubic subfield. $A$ is said to be a \emph{first Tits construction at $L$} if there exists a $\lambda \in k^\times$ such that the inclusion $L \hookrightarrow A$ can be extended to an embedding from the first Tits construction $J(L,\lambda)$ into $A$. In view of this terminology, our first result of this section may be regarded as a local-global principle for first Tits constructions.

\begin{thm} \label{embed}
An Albert division $k$-algebra is a first Tits construction if and only if it is a first Tits construction at every separable cubic subfield of $A$.    
\end{thm}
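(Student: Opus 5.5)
The plan is to prove the two implications separately, routing everything through the octonion algebra $\Oct(A)$ and its characterization in \ref{ss.INVALB}: $A$ is a first Tits construction if and only if $\Oct(A)$ is split.

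\emph{Only if.} Suppose $A \cong J(D,\mu)$ with $D$ central simple associative of degree $3$, and let $L \subseteq A$ be a separable cubic subfield. The aim is to find $\lambda \in k^\times$ such that $L \hookrightarrow A$ extends to $J(L,\lambda) \hookrightarrow A$. The natural route goes through the nine-dimensional subalgebra generated by $L$ and a suitable element of $L^\perp$. A first Tits construction $J(L,\lambda)$ is exactly $H(B,\tau)$ with $\tau$ distinguished (Thm.~\ref{t.CHADIS}). So it suffices to find a nine-dimensional subalgebra $H(B,\tau)$ of $A$ containing $L$ with $\tau$ distinguished, and then to arrange that the resulting isomorphism $H(B,\tau)\cong J(E,\mu)$ restricts to the given inclusion of $L$, i.e. $E \cong L$ compatibly.

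By Thm.~\ref{t.ETSETI}, any nine-dimensional subalgebra $J_0 = H(B,\tau)$ of $A$ yields $A = J(B,\tau,u,\mu)$. By the computation in the proof of Prop.~\ref{p.HOMDIS}, $\Oct(A)$ is then the octonion algebra of $\tau^{u^{-1}}$. Since $\Oct(A)$ is split, the isotope $J_0^{(u^{-1})}$, realized inside $A$ by a suitable autotopy, is a first Tits construction. The cleanest way to make this precise is to use that $A$ is homogeneous (\cite[Cor.~4.9]{MR734841}). Starting from $L$, I would take $J_0 := $ the subalgebra generated by $L$ and a nonzero $y \in L^{\perp}$ with suitable properties (standard in Albert algebras, via Petersson–Racine), which has dimension $9$ and contains $L$. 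Replacing $J_0$ by its image under a norm similarity fixing $L$ pointwise should then produce a distinguished involution.

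\emph{If.} Assume $A$ is a first Tits construction at every separable cubic subfield. Albert division algebras contain separable cubic subfields (standard), so pick one, $L$, and an embedding $J(L,\lambda)\hookrightarrow A$. Its image is $H(B,\tau)$ with $\tau$ distinguished by Thm.~\ref{t.CHADIS}, and $\Cent(B)=K$ with $B$ central simple over $K$. By Thm.~\ref{t.ETSETI}, $A \cong J(B,\tau,u,\mu)$, and $\Oct(A)$ is the octonion algebra of $\tau^{u^{-1}}$. The task is to choose the subfield so that this twist is distinguished. The key idea is to apply the hypothesis a second time, to a cubic subfield $L' \subseteq H(B,\tau^{u^{-1}})$ lying in a suitable copy inside $A$. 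Alternatively, and more directly, I would vary $L$ over the subfields $k[x]$ with $x\in A$ and pass to the $J(L,\lambda)$ containing them, so that every element of $A$ lies in a nine-dimensional first-Tits subalgebra. One then shows $f_3(A)$ splits by exhibiting, in the second Tits presentation, an admissible scalar with $u\in k^\times\{b\tau(b)\}$.

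\emph{Main obstacle.} The hard step is controlling $\Oct(A)$ versus the octonion algebra of the subalgebra, i.e. relating $\tau$ to $\tau^{u^{-1}}$ in both directions. I would first try to resolve it by using Prop.~\ref{p.HOMDIS}(iv)$\Leftrightarrow$(v): apply the hypothesis to $L \subseteq J_0$ twisted inside $A$ to show that every $K/k$-involution of $B$ is distinguished. Prop.~\ref{p.HOMDIS}(v) then forces $A$ to be a first Tits construction.
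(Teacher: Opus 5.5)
There is a genuine gap, most seriously in the ``if'' direction. There you fix one subalgebra $J_0=H(B,\tau)\cong J(L,\lambda)$ of $A$ and plan to show that every $K/k$-involution $\tau^p$ of $B$ is distinguished by ``applying the hypothesis to $L\subseteq J_0$ twisted inside $A$''. This cannot work as stated. The algebra $H(B,\tau^p)\cong J_0^{(p)}$ lives inside the isotope $A^{(p)}$, not inside $A$, and the hypothesis says nothing about cubic subfields of $A^{(p)}$ unless you already know $A^{(p)}\cong A$.

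A single $J(L,\lambda)\subseteq A$ does not help either. By Thm.~\ref{normic} it only gives $\Delta(L)\subseteq\Oct(A)$, which does not split $\Oct(A)$ when $\Delta(L)$ is a field. Your alternative, ``exhibit an admissible scalar with $u\in k^\times\{b\tau(b)\}$'', has no mechanism behind it.

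The paper supplies two ideas you are missing. First, it proves that $A$ is homogeneous, using exactly your remark that each non-scalar $v$ lies in some $S=J(L,\lambda)\subseteq A$ with $L=k[v]$. With $w=v^{-1}$, the map $l_0+l_1j_1+l_2j_2\mapsto wl_0+(l_1w)j_1+(w^{-1}l_2w^\sharp)j_2$ lies in $\Str(S)$ and sends $v$ to $1$. It extends to an element of $\Str(A)$ by \cite[Thm.~4.4]{MR4009439}; if $L$ is inseparable, $v^3\in k^\times 1_A$ suffices. Second, by Thm.~\ref{t.CYCIST} some isotope of $A$ is cyclic, hence by homogeneity $A$ itself contains a \emph{cyclic} cubic subfield $M$. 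The hypothesis at $M$ gives $J(M,\nu)\cong (M/k,\sigma,\nu)^{(+)}$, and Thm.~\ref{t.KUMFIT} then makes $A$ a first Tits construction.

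Your ``only if'' sketch is also incomplete. Thm.~\ref{t.CHADIS} gives $H(B,\tau)\cong J(E,\mu)$ only for \emph{some} cubic \'etale $E$. Nothing guarantees that the isomorphism extends the given inclusion $L\hookrightarrow A$, which is exactly what ``first Tits construction at $L$'' demands. The steps ``suitable $y\in L^\perp$'' and ``norm similarity fixing $L$'' are never specified.

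The paper simply cites \cite[Cor.~4.5]{MR734841}. If you want a self-contained argument, write $A\cong J(D,\mu)$ as you do. Then $A_L\cong J(D_L,\mu)$ is reduced, since it contains $L\otimes_kL$, which has non-trivial idempotents. Hence $A_L$ is split by \cite[Cor.~4.2]{MR734841}, so the Galois closure of $L$ splits $A$, and \cite[Thm.~4.4]{MR734841} applies. Equivalently: the split algebra $\Oct(A)$ contains $\Delta(L)$, and Thm.~\ref{normic}~(ii)$\Rightarrow$(i) finishes; its proof does not use the present theorem.
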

For the proof we need an important result from \cite{MR4242148}:
\begin{thm} \label{t.CYCIST} Let $A$ be an Albert division algebra over 
$k$. Then there is $v\in A^\times$ such that the isotope $A^{(v)}$ is \emph{cyclic}, i.e., contains a cyclic cubic  subfield. \hfill $\square$
\end{thm}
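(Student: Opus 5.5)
The natural strategy is to reduce to the nine-dimensional subalgebras of $A$ and use the flexibility of changing involutions, as in \ref{ss.ISDINV}. The reduction rests on one observation: if $J_0 \subseteq A$ is a subalgebra and $p \in J_0^\times$, then $J_0^{(p)}$ is a subalgebra of $A^{(p)}$. This holds because the $U$-operator of the isotope is $U_xU_p$, and $J_0$ is stable under it. Consequently, it will suffice to find a nine-dimensional subalgebra $J_0 = H(B,\tau)\subseteq A$ and an element $p\in J_0^\times$ such that $H(B,\tau^p) \cong J_0^{(p)}$ contains a cyclic cubic subfield $F$ over $k$. Then $F \subseteq J_0^{(p)}\subseteq A^{(p)}$, and $v := p$ does the job.

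\emph{Step 1.} Choose a separable cubic subfield $L \subseteq A$, which exists since $A$ is a division algebra. If $L$ is cyclic, we take $v = 1$. Otherwise let $K$ be the discriminant algebra of $L$, a quadratic field. The standard construction of the subalgebra generated by $L$ and a suitable element of $L^\perp$ then produces a nine-dimensional Freudenthal subalgebra $J_0 \supseteq L$ with $f_1(J_0)=K$. By \eqref{FRENIN}, $J_0 = H(B,\tau)$ with $\Cent(B) = K$. Moreover, $B$ is a division algebra over $K$, since $J_0$ is a division algebra and hence $g_2(J_0)\ne 0$. Here $M := L\otimes K \subseteq B$ is a cyclic cubic extension of $K$, and $M$ is Galois over $k$ with group $S_3$.

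\emph{Step 2.} By \ref{ss.ISDINV}, the $K/k$-involutions of $B$ are exactly the twists $\tau^p$ with $p\in J_0^\times$. So it remains to produce a cyclic cubic field $F/k$ and a $K/k$-involution $\sigma$ of $B$ such that $F$ lies in $H(B,\sigma)$. It is enough to find such an $F$ with $F\otimes K \hookrightarrow B$. Indeed, for a maximal subfield of $B$ of the form $F\otimes K$, stable under some $K/k$-semilinear automorphism extending $\mathrm{id}_F\otimes\bar{\ }$, a Skolem--Noether type argument yields a unitary involution $\sigma$ fixing $F$ elementwise. Since all unitary involutions are twists of $\tau$, we get $\sigma = \tau^p$.

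\emph{Step 3 (main obstacle).} The main work is to produce a cyclic $F/k$ with $F\otimes K\subseteq B$. Note that $F\otimes K$ has Galois group $\IZ/6$ over $k$, whereas $M$ has group $S_3$, so $M$ itself cannot serve. My first attempt will write $B$ as a cyclic algebra $(M/K,\rho,b)$, with $\rho$ a generator of $\Gal(M/K)$ and $z\in B$ satisfying $zxz^{-1}=\rho(x)$ and $z^3=b$. I would then choose $\tau$ adapted to this presentation, so that $\tau(z)$ is a multiple of $z^{-1}$, and look for an element of the form $w = cz$ ($c\in M^\times$) with $w^3\in k^\times$, or with $K(w)$ stable under a $\tau$-twist. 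The goal is that $k(w')$ for a suitable symmetrization $w'$ becomes a Kummer, or more generally cyclic, cubic extension of $k$. This step involves norm conditions $N_{M/K}(c)\,b \in k^\times$ up to cubes. I expect to exploit the hypothesis that $A$ is a division algebra, through $g_3(A)\neq 0$, or equivalently through Prop.~\ref{p.DISETI} applied to $A = J(B,\tau,u,\mu)$ from Thm.~\ref{t.ETSETI}, in order to guarantee solvability. I also allow $J_0$ to be replaced by another nine-dimensional subalgebra through $L$ if the first choice fails. Once $F$ is found, Steps 2 and 1 finish the proof.
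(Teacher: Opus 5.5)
The paper gives no proof of this statement; it is quoted from \cite{MR4242148}. Read as a standalone argument, your proposal has a genuine gap.

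What you actually establish is a reduction. Suppose some nine-dimensional subalgebra $J_0=H(B,\tau)\subseteq A$, with $K=\Cent(B)$, admits a cyclic cubic field extension $F/k$ such that $F\otimes_kK$ embeds in $B$. Then some isotope $A^{(p)}$, $p\in J_0^\times$, contains a copy of $F$. That part is fine. The inclusion $J_0^{(p)}\subseteq A^{(p)}$ is correct, and Skolem--Noether combined with Hilbert~$90$ for $F\otimes_kK$ over $F$ does give a $K/k$-involution fixing $F$ pointwise, which is a twist $\tau^p$ by \ref{ss.ISDINV}.

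But the hypothesis of this reduction already carries the whole content of the theorem, and Step~3, where it should be verified, is only a programme. Nothing in it says where a cyclic cubic extension of $k$ is supposed to come from. Yet the theorem implies that $k$ has one as soon as it carries an Albert division algebra. The mechanism you suggest cannot supply it in general. An element $w$ with $w^3\in k^\times$ generates a pure cubic extension of $k$. Such an extension is cyclic only if $k$ contains a primitive cube root of unity, and it is purely inseparable in characteristic~$3$. The ``more generally cyclic'' alternative is never specified. Nor is there any argument for how $g_3(A)\ne 0$ or Prop.~\ref{p.DISETI} would make conditions like $N_{M/K}(c)\,b\in k^\times$ modulo cubes solvable.

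Independently of this, Step~1 is false as stated. The centre $K$ of any nine-dimensional subalgebra $H(B,\tau)\subseteq A$ embeds in $\Oct(A)$. Indeed, as in the proof of Prop.~\ref{p.HOMDIS}, (iii)$\Rightarrow$(v), $f_3(A)$ is the $3$-invariant of an isotope $H(B,\tau^v)$, and this form is divisible by $n_K$ by \eqref{TOPINV} applied to the reduced model. Hence a subalgebra $J_0\supseteq L$ with $f_1(J_0)=\Delta(L)$ would give $\Delta(L)\subseteq\Oct(A)$. By Thm.~\ref{normic}, $A$ would then be a first Tits construction at $L$. For the generic Albert division algebra of Example~\ref{e.ALGEMA} this fails for every separable cubic subfield. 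By Thm.~\ref{embed} it fails for some $L$ whenever $A$ is not a first Tits construction.

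So $M=L\otimes_kK$ is in general not cyclic over $K$, and the presentation $B=(M/K,\rho,b)$ on which Step~3 rests is unavailable. You would have to fall back on Wedderburn's theorem for some cyclic $M\subseteq B$ unrelated to $L$. Where Step~1 does work, $M/k$ is an $S_3$-extension and, as you note yourself, useless for the goal.

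The missing idea is precisely a way to produce, for some nine-dimensional subalgebra of $A$, a maximal subfield of $B$ of the form $F\otimes_kK$ with $F/k$ cyclic. Without it the proposal does not prove the theorem.
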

\begin{cor}\label{embed-cor} Let $A$ be a homogeneous Albert division algebra over $k$. Then $A$ is cyclic. 

\hfill $\square$
\end{cor}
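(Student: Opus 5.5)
The statement is Cor.~\ref{embed-cor}: a homogeneous Albert division algebra $A$ is cyclic. I will derive it directly from Thm.~\ref{t.CYCIST}, by transporting a cyclic cubic subfield along an isomorphism.

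First apply Thm.~\ref{t.CYCIST} to $A$. This gives some $v \in A^\times$ such that the isotope $A^{(v)}$ contains a cyclic cubic subfield $L$. Since $A$ is homogeneous, by definition (\ref{ss.HOJOAL}) every isotope of $A$ is isomorphic to $A$. In particular there is an isomorphism of Jordan algebras $\vph\colon A^{(v)} \to A$.

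Next I check that $\vph$ is an isomorphism of cubic Jordan algebras, so that it preserves norms and hence cubic subfields. One way is to note that the isotope carries the norm $N_{A^{(v)}} = N_A(v)N_A$, and that for Albert algebras the norm is determined by the Jordan structure (it is the generic norm). A simpler route avoids norms altogether: $L$ is a subalgebra of $A^{(v)}$ containing the unit $1^{(v)} = v^{-1}$, and its Jordan structure is that of $L^{(+)}$ for a cubic field extension of $k$. Then $\vph(L)$ is a unital Jordan subalgebra of $A$ isomorphic to $L^{(+)}$. Hence it is a commutative associative subfield of $A$, isomorphic to $L$ as a $k$-algebra, because the field structure is recovered from $L^{(+)}$ via $x\cdot y = \tfrac12 U_{x,y}1$, or in characteristic $2$ via the usual powers/linearization argument inside the associative subalgebra.

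Finally, $\vph(L) \cong L$ is a cyclic cubic extension of $k$ sitting inside $A$, so $A$ is cyclic. The only point needing care is this identification of subfields across the isomorphism, especially in characteristic $2$. To handle it, I would phrase it as: $\vph$ maps the subalgebra $k[x]$ of $A^{(v)}$ generated by an element $x$ onto the subalgebra $k[\vph(x)]$ of $A$. These are the unital subalgebras generated by one element, and they are fields isomorphic as $k$-algebras, since each is $k[\bft]/(m(\bft))$ for the same minimal polynomial.
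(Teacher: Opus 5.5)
Your proof is correct and matches the paper's implicit argument, which is left as immediate after Thm.~\ref{t.CYCIST}: that theorem gives a cyclic isotope $A^{(v)}$, homogeneity gives $A^{(v)}\cong A$, and the cyclic cubic subfield is carried along the isomorphism. Your extra check, that a Jordan isomorphism preserves powers and therefore maps the one-generated subfield $k[x]$ of $A^{(v)}$ onto an isomorphic subfield $k[\vph(x)]$ of $A$, is a sound elaboration of a step the paper takes for granted.
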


\noindent \emph{Proof of Thm.~\ref{embed}}. The ``only if''-direction has been established in \cite[Cor.~4.5]{MR734841}. To prove the converse, we assume $A$ is a first Tits construction at every separable cubic subfield and must show that $A$ is a first Tits construction. Let $v\in A$ be an element that is not a scalar, i.e., $v\notin k$. Let $L=k[v]$ be the subalgebra of $A$ generated by $v$. By \cite[Prop.~46.6]{MR4806163}, $L$ is a cubic subfield of $A$. Assume first that $L$ is separable. Let $w:=v^{-1}$ and $R_w:L\rightarrow L$ denote the right homothety by $w$. Then $R_w(v)=1$. Since $A$ is a first Tits construction at $L$, there exists $\lambda\in k^\times$ such that $L \subseteq S := J(L,\lambda)\subseteq A$. Let $\chi:S\rightarrow S$ be the map given by  
\[
\chi(l_0 + l_1j_1 + l_2j_2) = wl_0 + (l_1w)j_1 + (w^{-1}l_2w^\sharp)j_2
\]
for $l_0, l_1, l_2\in L$. Since $\chi\in \Str(S)$ by \cite[Exc.~42.25]{MR4806163}, we conclude from \cite[Thm.~4.4]{MR4009439} that $\chi$ can be extended to  some element $\widetilde{\chi} \in \Str(A)$. In other words, $\widetilde{\chi}\in \Str(A)$ is such that $\widetilde{\chi}(v)=1$. It follows that $A^{(v)}\cong A$. If $L$ is inseparable, then $k$ has characteristic $3$ and $T_A(v)=T_A(v^\sharp)=0$. Hence $U_vv = v^3=\alpha.1_A$ for some $\alpha\in k^\times$. It follows again that $A^{(v)}\cong A$. Hence $A$ is homogeneous and therefore, by Cor. \ref{embed-cor}, it follows that $A$ contains a cubic cyclic subfield $M$. $A$ being a first Tits construction at $M$ yields $\nu\in k^\times$ such that $M \subseteq J(M,\nu)\subseteq A$. But $M$ is cyclic, so with a generator $\sigma$ of its Galois group, we may form the cyclic $k$-algebra $D := (M/k,\sigma,\nu)$, which is a central associative division algebra of degree $3$ over $k$ such that $D^{(+)} \cong J(M,\nu)$ \cite[Prop.~5.1]{MR87h:17038b}. Hence $D^{(+)}$ is a subalgebra of $A$ and $A$ is a first Tits construction. \hfill $\square$ \lz

\begin{eg} \label{e.ALGEMA}
\textbf{Generic matrices.} Considering the Albert algebra of generic matrices over an integeral domain as defined in \cite{MR1699633} and passing to its central closure, we obtain an Albert division algebra $A$ over an appropriate field $F$ that is a pure second Tits construction \cite[Thm.~1]{MR1699633}. In fact, it is as far removed from the first Tits construction as it could possibly be. We claim: \emph{$A$ does not contain any separable cubic subfield (over $F$) at which it is a first Tits construction.} Indeed, if $L \subseteq A$ is a separable cubic subfield, the quadratic trace of $A$ restricted to $L^\perp$, the orthogonal complement of $L$ relative t the bilinear trace, is anisotropic on the one hand \cite[Thm.~2]{MR1699633}, and up to a sign the Scharlau transfer induced by the linear trace of the Springer form of $L$ on the other \cite[Exc.~42.16~(a)]{MR4806163}. Hence the Springer form of $L$ is anisotropic as well, and the assertion follows from \cite[Thm.~4.4]{MR734841}.\hfill $\square$  
\end{eg}

If $A$ is an Albert division algebra not arising from the first Tits construction, Thm.~\ref{embed} implies that $A$ is not a first Tits construction at some separable cubic subfields of $A$. But it may very well be one at others. It is therefore of interest to find criteria spelling out necessary and sufficient conditions for $A$ to be a first Tits construction at individual separable cubic subfields. One of the conditions we are going to present below relates to certain rank-$2$-tori in the algebraic group $\bfAut(A)$ of automorphisms of $A$. The following lemma will pave the way. 

Recall that for a central simple algebra $(B,\sigma)$ with a unitary $K/k$ involution $\sigma$, $K$ being the center of $B$, maximal $k$-tori in ${\bf SU}(B,\sigma)$ are precisely of the form $T={\bf SU}(E,\sigma)$, where $E\subseteq B$ is a maximal commutative $\sigma$-stable \'{e}tale $K$-subalgebra (see \cite{MR1081540}, 3.4).

\begin{lem}\label{rational} Let $(B,\sigma)$ be a central associative division algebra of degree $3$ with unitary involution over $K/k$, where $K$ is the center of $B$. Let $\text{\bf T}$ be the maximal $k$-torus $\text{\bf SU}(E,\sigma)/\text{\bf Z}\subseteq \text{\bf PSU}(B,\sigma)$, where $E\subseteq B$ is a $\sigma$-stable separable cubic extension of $K$ and ${\bf Z}$ is the center of $\text{\bf SU}(B,\sigma)$. Let $g\in T:=\text{\bf T}(k)$ be represented by $p\in{\bf SU}(E,\sigma)$. Then $p^3\in \text{\bf SU}(E,\sigma)(k)$. 
\end{lem}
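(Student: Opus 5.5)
The plan is to use Galois descent: since $g$ is $k$-rational, any representative $p$ is fixed by Galois only up to a central element, and cubing kills that ambiguity. Let $k_s$ be a separable closure of $k$ and $\Gamma := \Gal(k_s/k)$. The point $g \in \bfT(k) \subseteq \bfT(k_s)$ is represented by some $p \in {\bf SU}(E,\sigma)(k_s)$; this uses the fact that ${\bf SU}(E,\sigma) \to \bfT$ is a surjection of tori with finite kernel ${\bf Z}$, so every $k_s$-point of $\bfT$ lifts (for $\ch(k) = 3$, where ${\bf Z} \cong \mu_3$ is not étale, I would pass to an algebraic closure or argue with fppf points; the argument below is unchanged). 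Here ${\bf Z}(k_s) = \{z \in (K\otimes_k k_s)^\times \mid z^3 = 1,\ z\sigma(z) = 1\}$, which is the center of ${\bf SU}(B,\sigma)$, and in particular a subgroup of the scalars $K\otimes_k k_s$.

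The first step is to note that, because $g$ is $\Gamma$-invariant, for every $\gamma \in \Gamma$ the element $\gamma(p)$ again represents $g$. Hence $\gamma(p) = z_\gamma p$ for some $z_\gamma \in {\bf Z}(k_s)$. The second step is to cube this: since $z_\gamma$ is central (a scalar in $K\otimes k_s$) and $z_\gamma^3 = 1$, we get $\gamma(p^3) = z_\gamma^3 p^3 = p^3$. Thus $p^3$ is $\Gamma$-invariant. Since $p^3 \in {\bf SU}(E,\sigma)(k_s)$ and ${\bf SU}(E,\sigma)$ is a $k$-group (a torus), Galois descent gives $p^3 \in {\bf SU}(E,\sigma)(k)$. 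If "represented by $p$" is meant with $p$ already taken in ${\bf SU}(E,\sigma)(k)$, the statement is trivial; the content lies in the $k_s$-lift, which is the case I would treat.

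The main obstacle is making the center argument precise. Specifically, one must check that the kernel of ${\bf SU}(E,\sigma) \to \bfT$ is ${\bf Z} = \mu_{3,[K]}$, and that its points are cube roots of unity. This follows because ${\bf Z}$ is the center of ${\bf SU}(B,\sigma)$, namely scalars $z$ with $N_B(z) = z^3 = 1$ and $z\sigma(z) = 1$. In characteristic $3$, I would replace the Galois argument by a functorial one. The map $x \mapsto x^3$ on ${\bf SU}(E,\sigma)$ kills ${\bf Z}$, so it factors through $\bfT$, giving a $k$-morphism $\bfT \to {\bf SU}(E,\sigma)$. Evaluating this morphism at $g \in \bfT(k)$ produces the $k$-point $p^3$. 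This factorization argument is in fact cleanest in all characteristics, and I would present it as the main proof.
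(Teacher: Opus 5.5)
Your Galois-descent argument is exactly the paper's proof: $\gamma(g)=g$ gives $p^{-1}\gamma(p)\in\mathbf{Z}$, and since $\mathbf{Z}$ is $3$-torsion, $p^3$ is $\Gamma$-invariant and hence rational. The argument you intend to present as the main proof is a genuinely different route. Instead of descending points, you use the universal property of the quotient $\mathbf{SU}(E,\sigma)\to\bfT$. The cubing homomorphism $c(x)=x^3$ of the commutative group $\mathbf{SU}(E,\sigma)$ kills $\mathbf{Z}$, so it factors through a $k$-homomorphism $\bar c\colon\bfT\to\mathbf{SU}(E,\sigma)$. Hence $p^3=c(p)=\bar c(g)$ is a $k$-point for any lift $p$ of $g$ over any extension of $k$. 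This is correct, and it buys two things. First, it is characteristic-free. When $\ch(k)=3$, the group $\mathbf{Z}\cong\mu_{3,[K]}$ is infinitesimal, so $\mathbf{Z}(k_s)=1$, and the lift of $g$ to $\mathbf{SU}(E,\sigma)(k_s)$ that the Galois argument relies on is no longer automatic; the obstruction lies in $H^1_{\mathrm{fppf}}(k_s,\mu_3)=k_s^\times/k_s^{\times 3}$, which is nonzero for imperfect $k$. Your parenthetical remedy of passing to $\bar k$ with the argument ``unchanged'' would not suffice on its own. Points of $\mathbf{SU}(E,\sigma)(\bar k)$ fixed by $\mathrm{Aut}(\bar k/k)$ are only rational over the perfect closure of $k$, so you are right to make the factorization the main argument. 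Second, $\bar c$ is a morphism of $k$-groups. This is what the proof of Theorem~\ref{normic} actually needs for $p\mapsto\eta_p$ to be a $k$-morphism $\mathbf{L}^{(1)}\to\bfAut(A)$ rather than merely a map on rational points. The paper's version is shorter and entirely adequate when $\ch(k)\neq 3$.
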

\begin{proof} Let $\Gamma = \text{Gal}(k_s/k)$, where $k_s$ is a separable closure of $k$. Since $\text{\bf T}$ is defined over $k$ and $g\in T$, we have $\gamma(g)=g$ for all $\gamma\in\Gamma$. Let $g\in T$ be represented by $p\in\text{\bf  SU}(E,\sigma)$, i.e., $g=p\text{\bf Z}$. 
 It follows that $p^{-1}\gamma(p)\in\text{\bf Z}$ for all $\gamma\in\Gamma$. 
 Since $\text{\bf Z}$ is a $3$-torsion abelian group, it follows that $p^3$ is fixed by $\Gamma$ and hence $p^3\in\text{\bf SU}(E,\sigma)(k)$.  
\end{proof}
We recall from \cite[p.~291 and Prop.~(18.24)]{MR1632779} that the discriminant of an \'etale $k$-algebra $L$ is a quadratic \'etale algebra over $k$, denoted by $\Delta(L)$.  By an isogenic embedding of algebraic groups we mean a homomorphism with finite (central) kernel. With this in mind, we can now prove:

\begin{thm}\label{normic} Let $A$ be an Albert division algebra over $k$ and $L \subseteq A$ a separable cubic subfield. Writing $\text{\bf L}^{(1)}$ for the torus of norm-$1$-elements in $L$, the following conditions are equivalent.
\begin{itemize}
\item [(i)]$A$ is a first Tits construction at $L$.

\item [(ii)] The discriminant of $L$ is a subalgebra of $\Oct(A)$.


\item [(iii)] The rank-$2$-torus $\text{\bf L}^{(1)}$ embeds in $\bfAut(A)$ isogenically over $k$.
\end{itemize}
\end{thm}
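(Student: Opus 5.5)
We will prove (i)$\Leftrightarrow$(ii) and then (i)$\Leftrightarrow$(iii), working throughout with the centralizer structure attached to $L$. Put $K := \Delta(L)$. By the standard description of Albert algebras containing a separable cubic subfield (second Tits construction relative to $L$), there is a $K$-algebra $B$ with unitary involution $\tau$, namely $B$ central simple of degree $3$ over $K$ containing $L\otimes_k K$, with $H(B,\tau)\subseteq A$ containing $L$. The natural choice of nine-dimensional subalgebra is the \'etale-twisted one: $L^\perp$ carries a hermitian structure over $L\otimes K$. We will use the description (e.g.\ \cite[Thm.~4.4]{MR734841} together with \cite[Exc.~42.16]{MR4806163}) that $A$ is a first Tits construction at $L$ if and only if the Springer form of $L$, a ternary quadratic form, represents appropriate values. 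Equivalently, $J(L,\lambda)\subseteq A$ for some $\lambda$ if and only if there exists $\lambda\in k^\times$ with $J(L,\lambda)\cong H(B',\tau')$ for some $(B',\tau')$ of the second kind over $K$ with distinguished involution.

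\emph{(i)$\Rightarrow$(ii).} Suppose $S := J(L,\lambda)\subseteq A$. By Thm.~\ref{t.CHADIS}, $S \cong H(B',\tau')$ with $\tau'$ distinguished and $\Cent(B') = \Delta(L) = K$, since the discriminant of $J(L,\lambda)$ is that of $L$. By Thm.~\ref{t.ETSETI}, $A = J(B',\tau',u,\mu)$, and by \cite[Thm.~1.8]{MR1337184} $\Oct(A)$ is the octonion algebra of $\tau'^{u^{-1}}$. The octonion algebra of any $K/k$-involution on $B'$ contains $K$, because $f_3(H(B',\tau'))$ is divisible by $f_1 = n_K$. Hence $K\subseteq \Oct(A)$.

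\emph{(ii)$\Rightarrow$(i).} This is the main obstacle. The plan is to write $\Oct(A) = \mathrm{Cay}(Q,\ldots)$ with $K\subseteq Q$, so that $n_K \mid f_3(A)$. Choose the second Tits presentation $A = J(B,\tau,u,\mu)$ with $L\subseteq H(B,\tau)$, which is possible via \cite[Thm.~4.4]{MR4009439}-type extension arguments. We will then adjust the involution: by \ref{ss.ISDINV}, twisting $\tau$ by elements of $L^\times$ changes $f_3$ within the class of forms divisible by $n_K$, and since $n_K$ divides $f_3(A)$ we can aim for a twist $\tau^p$, $p\in L^\times$, that is distinguished. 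One would try to realize the ratio $f_3(A)$ versus $f_3(\tau)$ as a $2$-fold Pfister form $\dla a,b\dra\otimes n_K$ with $a$ a norm from $L$. Theorem~\ref{t.CHADIS} then gives $H(B,\tau^p)\cong J(L',\lambda)$. We still need $L'=L$, which we would get by using that distinguished involutions admit first Tits presentations at any $\tau$-stable cubic subfield (Petersson--Racine). Finally, use the isotope-embedding argument as in the proof of Thm.~\ref{embed}.

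\emph{(i)$\Leftrightarrow$(iii).} Given $J(L,\lambda)\subseteq A$, the maps $l_0+l_1j_1+l_2j_2\mapsto l_0+(l_1 t)j_1+(l_2t^{-1})j_2$, for $t\in \mathbf L^{(1)}$, are automorphisms of $J(L,\lambda)$ fixing $L$. These extend to $A$ by the Skolem--Noether property for Albert algebras, giving an embedding $\mathbf L^{(1)}\to\bfAut(A)$. Conversely, given an isogenic image $\mathbf T$ of $\mathbf L^{(1)}$, its fixed subalgebra is a nine-dimensional $H(B,\tau)$ on which $\mathbf T$ is a maximal torus of $\mathbf{PSU}(B,\tau)$, of the form $\mathbf{SU}(E,\tau)/\mathbf Z$. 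Lemma~\ref{rational} lets us lift points, after cubing, to $\mathbf{SU}(E,\tau)$. Comparing character lattices identifies $E$ with $L\otimes K$ up to isogeny, which forces $\tau$ to be distinguished, hence (via Thm.~\ref{t.CHADIS}) $H(B,\tau)\cong J(L,\lambda)$. This lattice comparison is the second delicate point.
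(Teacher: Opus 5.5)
\paragraph{(i)$\Rightarrow$(ii) is fine; (ii)$\Rightarrow$(i) has a genuine gap.} Your (i)$\Rightarrow$(ii) is sound and takes a different route from the paper. You use $J(L,\lambda)\cong H(B',\tau')$ with $\Cent(B')\cong\Delta(L)$, then Thm.~\ref{t.ETSETI} and \cite[Thm.~1.8]{MR1337184}, to see that $f_3(A)$ is $f_3$ of a twist of $\tau'$, hence divisible by $n_{\Delta(L)}$.

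The gap is (ii)$\Rightarrow$(i), which is only a plan, and the plan cannot work as set up. Your opening claim is false in general: you assert that $L$ lies in some $H(B,\tau)\subseteq A$ with $\Cent(B)\cong\Delta(L)$. By your own (i)$\Rightarrow$(ii) argument this would give (ii), hence (i), for every separable cubic subfield $L$. That contradicts Example~\ref{e.ALGEMA}.

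Without that claim the remaining steps have nothing to work with:
\begin{itemize}
\item Twisting $\tau$ by $p\in L^\times$ only moves $f_3$ among multiples of $n_{\Cent(B)}$.
\item Nothing produces a distinguished $\tau^p$.
\item The assertion that a distinguished $H(B,\tau)$ is a first Tits construction at every cubic subfield it contains fails as soon as $\Delta(L)\not\cong\Cent(B)$, because $J(L,\lambda)\cong H(B',\tau')$ forces $\Cent(B')\cong\Delta(L)$.
\end{itemize}

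The missing idea is \cite[Thm.~4.4]{MR734841}: $A$ is a first Tits construction at $L$ if and only if the Galois closure of $L$ splits $A$. Write $\Delta:=\Delta(L)$. If $L$ is cyclic, then $L$ splits $A$ iff $\Oct(A)_L$ splits, iff $\Oct(A)$ splits (Springer, since $[L:k]=3$), iff $\Delta\subseteq\Oct(A)$, because $\Delta$ is split. Otherwise the Galois closure is $L_\Delta$, and the cyclic case over $\Delta$ shows that it splits $A$ iff $\Oct(A)_\Delta$ splits, i.e., iff $\Delta\subseteq\Oct(A)$.

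\paragraph{The equivalence with (iii) also has gaps.} In (i)$\Rightarrow$(iii), you extend each automorphism of $J(L,\lambda)$ to $A$ separately, appealing to a Skolem--Noether statement. Even granting that, this does not give a $k$-morphism of group schemes $\mathbf{L}^{(1)}\to\bfAut(A)$. Extensions are unique only modulo the pointwise stabilizer of $J(L,\lambda)$, and they would have to be chosen functorially and multiplicatively. Note also that the theorem only claims an \emph{isogenic} embedding.

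The paper proceeds as follows:
\begin{itemize}
\item It writes $J(L,\lambda)\cong H(B,\sigma)$ and $A\cong J(B,\sigma,u,\mu)$.
\item It identifies $\mathbf{L}^{(1)}$ with a maximal torus $\mathbf{SU}(E,\sigma)/\mathbf{Z}$ of $\mathbf{PGU}(B,\sigma)$.
\item It uses Lemma~\ref{rational} to get $a_p^3\in\mathbf{SU}(E,\sigma)(k)$, and lets this act on $A$ by $(b,x)\mapsto(a_p^3ba_p^{-3},a_p^3x)$.
\end{itemize}
The cubing is exactly where the isogeny comes from.

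In (iii)$\Rightarrow$(i), your premise that the torus fixes a nine-dimensional subalgebra pointwise is false. The torus obtained above acts by conjugation on $H(B,\sigma)$ and by left multiplication on $Bj$, so it fixes only the cubic \'etale algebra $E\cap H(B,\sigma)$. The claim that ``the lattice comparison forces $\tau$ to be distinguished'' is also unsupported. Even a distinguished $\tau$ would only yield some $J(L',\lambda)$ with $\Delta(L')\cong\Cent(B)$, not necessarily $L'=L$. The paper does not prove this direction from scratch: it quotes \cite[Thm.~4.12]{MR3795634} for (iii)$\Rightarrow$(ii) and then applies (ii)$\Rightarrow$(i).
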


\begin{proof} (i)$\Leftrightarrow$(ii). Write $\Delta$ for the discriminant of $L$. By \cite[Thm.~4.4]{MR734841}, (i) holds if and only if the Galois closure of $L$ is a splitting field of $A$. It therefore suffices to show that this latter condition is equivalent to $\Delta$ being a subalgebra of $\Oct(A)$. Assume first that $L$ is cyclic. Then $\Delta$ is split, and $L$ agrees with its own Galois closure. Now, $L$ is a splitting field of $A$ iff $A_L$ is split iff so is $\Oct(A_L) \cong \Oct(A)_L$ \cite[Thm.~4.10]{MR2063796} iff $\Oct(A)$ is split (by Springer's theorem \cite[Cor.~18.5]{MR2427530} since $[L:k]$ is odd) iff $\Oct(A)$ contains $\Delta$ (since $\Delta$ is split). On the other hand, if $L$ is not cyclic, then $\Delta$ is a field, and $L_\Delta$ is the Galois closure of $L$. Now, $L_\Delta$ splits $A$ iff it splits $A_\Delta$ over $\Delta$ iff $\Oct(A_\Delta) \cong \Oct(A)_\Delta$ is split (by the previous case) iff $\Delta$ is a subfield of $\Oct(A)$.

(i)$\Rightarrow$(iii). Assume $J(L,a)\subseteq A$ for some $a\in k^{\times}$. 
    From \eqref{FRENIN} we conclude $J(L,a)\cong H(B,\sigma)$ for some central simple associative $k$-algebra $(B,\sigma)$ of degree $3$ with unitary involution, while Thm.~\ref{t.ETSETI} implies $A\cong J(B,\sigma, u, \mu)$ for some admissible scalar $(u,\mu)$ relative to $(B,\sigma)$. We have $J(L,a)= L \oplus Lj_1\oplus Lj_2$ and, by (\cite{MR1632779}, 29.16, 37.B), 
    $$\text{\bf Aut}(J(L,a))^{\circ}=\text{\bf PGU}(B,\sigma)=
    \text{\bf SU}(B,\sigma)/\text{\bf Z},$$ where $\text{\bf Z}$ is the center of $\text{\bf SU}(B,\sigma)$ (see \cite[23.4]{MR1632779}). 
For $R \in \kalg$ and $p \in \textbf{L}^1(R)$, the map
\[
\phi(R)_p\:J(L,a)_R \longrightarrow J(L,a)_R,\; x_0 + x_1j_{1R} + x_2j_{2R} \longmapsto x_0 + (x_1p^{-1})j_{1R} + (px_2)j_{2R}
\]
for $x_0,x_1,x_2 \in L_R$ fixes the unit element and by \eqref{FITNOR} preserves norms, hence is an automorphism of $J(L,a)_R$ \cite[Exc.~34.18]{MR4806163}. Thus we have an embedding of $\text{\bf L}^{(1)}$ in $\text{\bf Aut}(J(L,a))$ over $k$.  Maximal $k$-tori in $\text{\bf Aut}(J(L,a))$ are of the form $\text{\bf SU}(E,\sigma)/\text{\bf Z}$ for $E\subseteq B$, a $\sigma$-stable cubic \'{e}tale subalgebra of $B$ containing $K$ and $\text{\bf Z}$ the center of $\text{\bf SU}(B,\sigma)$. It follows that $\text{\bf L}^{(1)}\cong\text{\bf SU}(E,\sigma)/\text{\bf Z}$ for a suitable $\sigma$-stable $E\subseteq B$ as above. Let $\phi:\text{\bf L}^{(1)}\cong\text{\bf SU}(E,\sigma)/\text{\bf Z}$ be an isomorphism defined over $k$. Recall that the center $\text{\bf Z}$ of $\text{\bf SU}(B,\sigma)$ is $3$-torsion.
	
	Let $\phi(p)=a_p\text{\bf Z}$, where $a_p\in\text{\bf SU}(E,\sigma)$. By Lemma \ref{rational}, $a_p^3\in\text{\bf SU}(E,\sigma)(k)$. By Lemma \ref{rational}, the map $\eta_p:A=J(B,\sigma, u, \mu)\longrightarrow A$ given by $\eta_p(b,x)=(a_p^3ba_p^{-3}, a_p^3x)$ is an automorphism of $A$ defined over $k$. It follows that the map $\text{\bf L}^{(1)}\longrightarrow\text{\bf Aut}(A)$ given by $p\mapsto \eta_p$ is a well defined isogenic embedding over $k$. 
    
  (iii)$\Rightarrow$(i). This is proved in \cite[Thm.~4.12]{MR3795634}. If the rank-$2$-torus $\text{\bf L}^{(1)}$ embeds in $\mathrm{\bf Aut}(A)$ isogenically over $k$, then the discriminant of $L$ is a subalgebra of $\mathrm{Oct}(A)$. Now use (ii)$\Rightarrow$(i). 
	\end{proof}

\begin{cor} \label{c.NECOCT}
For an octonion division $k$-algebra $C$ to be isomorphic to the octonion algebra of some Albert division algebra over $k$ it is necessary that there exist a separable quadratic field extension of $k$ that is not isomorphic to a subalgebra of $C$.     
\end{cor}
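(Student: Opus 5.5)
Suppose $A$ is an Albert division algebra with $\Oct(A) \cong C$, and assume for contradiction that every separable quadratic field extension of $k$ embeds in $C$. We will show that $A$ is then a first Tits construction. This forces $\Oct(A)$ to be split by \ref{ss.INVALB}, which is impossible for the division algebra $C$.

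The plan runs through Thm.~\ref{embed} and Thm.~\ref{normic}. By Thm.~\ref{embed}, it suffices to show that $A$ is a first Tits construction at every separable cubic subfield $L \subseteq A$. By Thm.~\ref{normic}, (ii)$\Rightarrow$(i), this reduces to checking that the discriminant $\Delta(L)$ is isomorphic to a subalgebra of $\Oct(A) = C$. There are two cases. If $L$ is not cyclic, $\Delta(L)$ is a separable quadratic field extension of $k$, and it embeds in $C$ by hypothesis. If $L$ is cyclic, $\Delta(L) \cong k \times k$ is split; we must still check that this split quadratic étale algebra is a subalgebra of $C$.

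The cyclic case is the main obstacle, because a division octonion algebra $C$ contains no copy of $k \times k$. I would handle it by avoiding Thm.~\ref{embed} and arguing directly. If $A$ contains a cyclic cubic subfield $M$, then $A$ is a first Tits construction at $M$ if and only if $\Delta(M)=k\times k$ embeds in $C$. This is false, so $A$ is simply not a first Tits construction at $M$, and the hypothesis gives no leverage there.

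The fix is to apply the hypothesis only to non-cyclic subfields and then rerun the proof of Thm.~\ref{embed}. Given a non-scalar $v \in A$ with $L = k[v]$ separable and non-cyclic, the hypothesis together with Thm.~\ref{normic} shows that $A$ is a first Tits construction at $L$. The structure-group extension argument in the proof of Thm.~\ref{embed} then gives $A^{(v)} \cong A$. For inseparable $L$, the same proof gives this directly. For cyclic $L = k[v]$, I would perturb: replace $v$ by an element $v'$ that generates a non-cyclic separable subfield and satisfies $A^{(v)} \cong A^{(v')}$. One candidate is $v' = v + w$ with $w$ chosen suitably outside $L$; a Zariski-density argument shows that generic elements generate non-cyclic subfields when $k$ is infinite. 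Given such a $v'$, transitivity of homogeneity would follow.

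If this perturbation step proves too delicate, the fallback is a simpler observation. Once $A$ is known to be homogeneous, Cor.~\ref{embed-cor} gives a cyclic subfield $M \subseteq A$, but $A$ is not a first Tits construction at $M$. The cleaner route is to show directly that $A$ is then a first Tits construction at some non-cyclic $L$, using Thm.~\ref{normic}: embedding $\Delta(L)$ in $C$ yields $J(L,\lambda) \subseteq A$. Since $J(L,\lambda)_{\Delta(L)}$ is a first Tits construction over $\Delta(L)$ at the cyclic field $L_{\Delta}$, the algebra $A_{\Delta}$ is a first Tits construction over $\Delta$. Hence $\Oct(A)_\Delta$ is split, which is consistent with $\Delta \subseteq C$ and gives no contradiction. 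This route therefore does not close the argument, and the perturbation step, which ensures that every isotope $A^{(v)}$ is reached through a non-cyclic subfield, is where I expect the real work to lie.
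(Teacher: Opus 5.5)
Your diagnosis of the cyclic case is correct, and it is the crux. By the cyclic case of Thm.~\ref{normic}, $A$ is a first Tits construction at a cyclic cubic subfield $L$ if and only if $\Oct(A)$ is split, because $\Delta(L)\cong k\times k$ has zero divisors and cannot lie in a division octonion algebra. So when $C$ is division, $A$ is a first Tits construction at \emph{no} cyclic cubic subfield, and the hypothesis of Thm.~\ref{embed} fails as soon as $A$ contains one.

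The paper's own proof takes exactly the route you outline in your first two paragraphs: Thm.~\ref{normic} at every separable cubic subfield, then Thm.~\ref{embed}. It passes over this point without comment. That route does work if $A$ happens to have no cyclic cubic subfield. But nothing reduces the problem to that case: $\Oct(A)$ is an isotopy invariant, and by Thm.~\ref{t.CYCIST} every Albert division algebra has a cyclic isotope. So your objection applies verbatim to the paper's argument.

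Your proposal does not close the gap either, so as it stands it is not a proof.

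First, the perturbation step is unsupported. Whether $k[v']$ is cyclic is an arithmetic condition (in characteristic $\ne 2$: the discriminant of the minimum polynomial of $v'$ is a square in $k$). It is neither Zariski-open nor Zariski-closed, so a density argument cannot produce a non-cyclic $v'$ in the structure-group orbit of $v$. Over a field all of whose separable cubic extensions are cyclic (e.g.\ one with abelian absolute Galois group), there is nothing to perturb to.

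Second, and more important: even if the perturbation worked, it would only show that $A$ is homogeneous. Your own fallback paragraph shows this is not enough. The proof of Thm.~\ref{embed} passes from homogeneity to a first Tits construction via Cor.~\ref{embed-cor}, i.e.\ via a cyclic subfield $M$ at which $A$ must be a first Tits construction, and that is exactly what is impossible here. Whether every homogeneous Albert division algebra is a first Tits construction is the open question stated in the Introduction.

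So the missing ingredient is not the perturbation. What is missing is an argument that goes from ``$A$ is a first Tits construction at every non-cyclic separable cubic subfield'' to ``$\Oct(A)$ is split'' without passing through a cyclic subfield. Neither your proposal nor the argument given in the paper supplies it.
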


\begin{proof}
Let $A$ be an Albert division algebra over $k$ having $C \cong \Oct(A)$. Arguing indirectly, let us assume that every separable quadratic field extension of $k$ is isomorphic to a subalgebra of $C$. By Thm.~\ref{normic}, $A$ is a first Tits construction at every separable cubic subfield. By Thm.~\ref{embed}, therefore, $A$ is a first Tits construction, forcing $C \cong \Oct(A)$ to be split, a contradiction.    
\end{proof}

\begin{rem} \label{r.OCNECO}
\emph{The necessary condition spelled out in the preceding corollary is, of course, not sufficient. Let $C$ be the unique octonion division algebra over $\IQ$, the rationals \cite[Cor.~23.23]{MR4806163}. Albert division algebras over $\IQ$ having $C$ as associated octonion algebra do not exist, for the simple reason that all rational Albert algebras are reduced \cite[Cor.~46.17]{MR4806163}, and yet, (separable) quadratic field extensions of $\IQ$ not isomorphic to any subalgebra of $C$ exist in abundance: all real quadratic number fields.} 
\end{rem}

In Thm.~\ref{normic}, we can do without the assumption that $L$ be a subalgebra of $A$. More precisely, we have the following:
\begin{cor}\label{f5} Let $A$ be an Albert division algebra over $k$. Then $f_5(A)=0$ if and only if there is an isogenic $k$-embedding $\text{\bf L}^{(1)}\rightarrow\text{\bf Aut}(A)$, where $L$ is a cubic separable field extension of $k$.
\end{cor}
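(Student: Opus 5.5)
The plan is to reduce Cor.~\ref{f5} to Thm.~\ref{normic}. The main task is to pass between a cubic field $L$ that is only given abstractly and a separable cubic subfield of $A$ itself, or of a suitable isotope of $A$. The bridge is the reduced model $A_{\red}=\Her_3(C,\Gamma)$, with $C \cong \Oct(A)$ and $\Gamma = \diag(\gamma_1,\gamma_2,1)$. By \eqref{TOPPFI} and \eqref{JAYRET} we have $f_5(A)=\dla -\gamma_1,-\gamma_2\dra\otimes n_C$. So $f_5(A)=0$ amounts to a statement that some binary form $\dla -\gamma_1,-\gamma_2 \dra$ (up to the ambiguity coming from isotopy) becomes "absorbed" by $n_C$. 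This happens in particular when $n_C$ contains the norm of a suitable quadratic étale algebra $\Delta$ together with a compatible scalar.

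\emph{($\Leftarrow$).} Suppose $\phi\colon \mathbf{L}^{(1)}\to\bfAut(A)$ is an isogenic $k$-embedding, and let $\mathbf T$ be its image, a $k$-torus of rank $2$.
\begin{enumerate}
\item First I consider the fixed subalgebra $S := A^{\mathbf T}$. Over $k_s$ a rank-$2$ torus of this type (of "type $\mathsf A_2$", as in the proof of Thm.~\ref{normic}) fixes a $3$-dimensional cubic étale subalgebra. I will use this to show that $S$ is a separable cubic subfield $L'\subseteq A$ with $\Delta(L')\cong\Delta(L)$. The key input is that the Galois action on the character lattice of $\mathbf T\cong \mathbf L^{(1)}$ is governed by $\Gal$ of the Galois closure of $L$.
\item Then $\mathbf{L'}^{(1)}$ embeds isogenically into $\bfAut(A)$. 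By Thm.~\ref{normic}, (iii)$\Rightarrow$(i), $A$ is a first Tits construction at $L'$, so $J(L',\lambda)\subseteq A$.
\item Writing $J(L',\lambda)\cong H(B,\sigma)$, the involution $\sigma$ is distinguished by Thm.~\ref{t.CHADIS}. By Thm.~\ref{t.ETSETI}, $A\cong J(B,\sigma,u,\mu)$.
\item Finally I compute $f_5$ of this second Tits construction in terms of $f_3(H(B,\sigma^{u^{-1}}))$ and the admissible scalar, as in \cite[Thm.~1.8]{MR1337184}. Since $f_3(H(B,\sigma))=0$ and $H(B,\sigma)$ is a first Tits construction $J(L',\lambda)$, the reduced model of $A$ can be taken as $J(k\times K',\ldots)$-like. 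The resulting Pfister $5$-form should contain the hyperbolic factor coming from $f_3(H(B,\sigma))$, so $f_5(A)=0$.
\end{enumerate}

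\emph{($\Rightarrow$).} Assume $f_5(A)=0$. I will first show that $C=\Oct(A)$ contains a quadratic étale subalgebra $\Delta$ such that $\Delta$ is realized as $\Delta(L)$ for a separable cubic subfield $L$ of some isotope $A^{(v)}$.
\begin{enumerate}
\item Isotopy preserves $f_3$ and $\Oct(A)$. Moreover, $\bfAut(A^{(v)})$ and $\bfAut(A)$ are inner forms of one another over $k$ whose anisotropic kernels share tori in the relevant sense. Rather than rely on this, I would instead use Thm.~\ref{t.CYCIST} to pass to a cyclic isotope and track the invariants, or argue directly with $f_5$, as follows.
\item Hyperbolicity of $\dla -\gamma_1,-\gamma_2\dra\otimes n_C$ lets one choose $\Gamma$ so that $\gamma_1=-n_C(c)$ for some $c$ with $k[c]=\Delta$ quadratic étale.
\item This gives a $9$-dimensional subalgebra $\Her_3(\Delta,\Gamma)\subseteq A_{\red}$ with distinguished involution. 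Lifting to $A$ via \cite[Thm.~2.8]{MR1417849}, I obtain a separable cubic $L\subseteq A$ with $\Delta(L)\cong\Delta\subseteq\Oct(A)$.
\item Then Thm.~\ref{normic}, (ii)$\Rightarrow$(iii), produces the isogenic embedding of $\mathbf L^{(1)}$ into $\bfAut(A)$.
\end{enumerate}

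The main obstacle in ($\Rightarrow$) is the lifting step, from data in the reduced model to an actual cubic subfield of the division algebra $A$ with the prescribed discriminant. I would first try to realize it by showing that $A$ contains a $9$-dimensional subalgebra $H(B,\sigma)$ with $\Cent(B)\cong\Delta$ and distinguished $\sigma$. Such a subalgebra is a first Tits construction $J(L,\lambda)$ by Thm.~\ref{t.CHADIS}, and it supplies the desired $L$.

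In ($\Leftarrow$), the delicate point is the $f_5$ computation in the last step. If it resists, I would replace it by base change to $\Delta(L)$: there $L$ becomes cyclic, so $A_{\Delta(L)}$ is a first Tits construction. I would then combine this with a Springer-type argument for the even-degree extension $\Delta(L)/k$, using that $f_5(A)$ is divisible by the norm form of $\Delta(L)$.
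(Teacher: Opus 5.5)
Your proposal has genuine gaps in both directions; in each case the decisive step is either missing or false.

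\emph{Forward direction.} Your argument stops at the step you flag yourself, and the tool you propose there cannot do the job. \cite[Thm.~2.8]{MR1417849} only gives existence and uniqueness of $A_{\red}$, characterized by $A_K\cong(A_{\red})_K$ over fields $K$ that reduce $A$. It gives no way to transport $\Her_3(\Delta,\Gamma)$ (which contains idempotents), or any cubic subalgebra of it, into the division algebra $A$. Isotopes are no help either, since $f_5$ is not an isotopy invariant. The missing ingredient is the structure theorem \cite[Thm.~4.4]{MR2063796}: $f_5(A)=0$ forces $A\cong J(B,\sigma,u,\mu)$ with $\sigma$ distinguished. That is how the paper argues. \cite[Thm.~3.1]{MR2063796} then provides a separable cubic subfield $L\subseteq H(B,\sigma)\subseteq A$ with $\Delta(L)\cong\Cent(B)$, and Thm.~\ref{normic} yields the isogenic embedding. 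Your ``first try'' names this target (there is no need to prescribe $\Cent(B)\cong\Delta$), but nothing in the proposal actually reaches it.

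\emph{Converse.} Step~1 is false. Over $k_s$ the image $\mathbf{T}$ is just a split rank-$2$ torus. The Galois action on $X(\mathbf{T})$ does not determine how $\mathbf{T}$ sits in $\bfAut(A)$, and $A^{\mathbf{T}}$ can be $9$-dimensional. For instance, let $A=J(D,\nu)$ be a first Tits construction and $M\subseteq D$ a separable cubic subfield. The formula from the proof of Thm.~\ref{normic}, $x_0+x_1j_1+x_2j_2\mapsto x_0+(x_1q^{-1})j_1+(qx_2)j_2$ for $q\in\mathbf{M}^{(1)}$, now read on $D\oplus Dj_1\oplus Dj_2$, is a $k$-embedding $\mathbf{M}^{(1)}\hookrightarrow\bfAut(A)$ whose fixed algebra is all of $D^{(+)}$. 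Here $f_5(A)=0$, so the corollary survives, but Step~1 does not.

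Even when $A^{\mathbf{T}}$ is a cubic field $L'$, you give no argument for $\Delta(L')\cong\Delta(L)$. Step~2 is then a non sequitur: $\mathbf{L'}^{(1)}$ and $\mathbf{L}^{(1)}$ are isogenous only if $L'\cong L$, so at best you could pass through condition (ii) of Thm.~\ref{normic}. Step~4 is true but only asserted. Your fallback for it cannot work: over $\Delta(L)$ you learn only that $f_5(A)$ is divisible by $n_{\Delta(L)}$. That is automatic once $\Delta(L)\subseteq\Oct(A)$, and Springer's theorem concerns odd-degree extensions, not the quadratic $\Delta(L)/k$. The paper itself does not prove this direction internally; it quotes \cite{MR3795634}.
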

\begin{proof} If $f_5(A)=0$, then we can write $A=J(B,\sigma, u, \mu)$, as a second Tits construction, with $\sigma$ a distinguished unitary involution of $B$ and $(u,\mu)$ an admissible scalar \cite[Thm.~4.4]{MR2063796}. By (\cite[Them. 3.1]{MR2063796}), there exists a cubic separable field extension $L/k$, $L\subseteq H(B,\sigma)$ with discriminant $K = \Cent(B)$. By Thm. ~\ref{normic}, we have a $k$-isogenic $k$-embedding $\text{\bf L}^{(1)}\rightarrow\text{\bf Aut}(A)$. The converse is proved in 
\cite{MR3795634}.
\end{proof}
\begin{rem}
\emph{In \cite[Thm.~4.3]{MR3795634}, it was shown that if $A$ is an Albert algebra over $k$ such that $\text{\bf Aut}(A)$ allows an isogenic $k$-embedding ${\bf L}^{(1)}\rightarrow{\bf Aut}(A)$ for a cubic cyclic
field extension $L/k$, then $f_3(A)=0$, i.e., $A$ is a first construction. The preceding corollary can be interpreted as a generalization of this. To be precise, if we have an isogenic embedding ${\bf L}^{(1)}\hookrightarrow\text{\bf Aut}(A)$ over $k$, where $L$ is a cyclic cubic field extension of $k$, then $A\otimes_k L$ is split. To see this note that since $L$ is cyclic, the torus ${\bf L}^{(1)}$ becomes split when extending scalars from $k$ to $L$, so $\bfAut(A_L)$ contains a split torus of rank $2$. But a group of type $F_4$ containing a split torus of rank $2$ must be necessarily split (see \cite{MR224710}, Table II of Tits indices). Thus $\bfAut(A_L)$ is split, and so is $A_L$. Hence, by \cite[Thm.~2]{MR73558}, $L$ embeds in an isotope $A'$ of $A$ and, by Thm.~\ref{normic}, the discriminant of $L$ is a subalgebra of $\Oct(A')$. Since $\text{Oct}(A)$ is unchanged by passing to an isotope of $A$, it follows that $\text{Oct}(A)$ too contains the discriminant of $L$ as a subalgebra. But since $L$ is cyclic over $k$, its discriminant is split, hence $\text{Oct}(A)$ splits, so $A$ is a first construction.}
\end{rem}

\section{Criteria for homogeneity} \label{redal} 
We begin our list of criteria by recalling again, but this time on a more formal level, the following result of \cite[Cor.~4.9]{MR734841}.

\begin{thm} \label{t.FITIHO}
Every Albert division $k$-algebra arising from the first Tits construction is homogeneous. \hfill $\square$    
\end{thm}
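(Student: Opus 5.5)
The plan is to reduce homogeneity to moving an arbitrary invertible element to $1_A$ by an element of $\Str(A)$, following the pattern of the proof of Thm.~\ref{embed}.

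Let $A$ be an Albert division algebra that is a first Tits construction, and fix $v \in A^\times = A\setminus\{0\}$. It suffices to produce $\widetilde\chi \in \Str(A)$ with $\widetilde\chi(v) = 1_A$. Then $A^{(v)} \cong A$ by \cite[Thm.~31.22~(d)]{MR4806163}, and homogeneity follows. If $v \in k^\times$, we take $\widetilde\chi = v^{-1}\Eins_A$. Otherwise $L := k[v]$ is a cubic subfield of $A$ by \cite[Prop.~46.6]{MR4806163}.

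\emph{$L$ separable.} By the ``only if'' direction of Thm.~\ref{embed}, i.e.\ \cite[Cor.~4.5]{MR734841}, $A$ is a first Tits construction at $L$. Hence $L \subseteq S := J(L,\lambda) \subseteq A$ for some $\lambda \in k^\times$. Put $w := v^{-1} \in L$. The map
\[
\chi(l_0 + l_1j_1 + l_2j_2) = wl_0 + (l_1w)j_1 + (w^{-1}l_2w^\sharp)j_2
\]
lies in $\Str(S)$ by \cite[Exc.~42.25]{MR4806163}; one checks this by verifying that $\chi$ multiplies the norm \eqref{FITNOR} by $N_L(w)$. It satisfies $\chi(v) = wv = 1$. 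We then extend $\chi$ to some $\widetilde\chi \in \Str(A)$ using \cite[Thm.~4.4]{MR4009439}, which extends autotopies of nine-dimensional subalgebras.

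\emph{$L$ inseparable.} Then $\ch(k) = 3$ and $T_A(v) = S_A(v) = 0$, so $v^3 = N_A(v)1_A =: \alpha 1_A$ with $\alpha \ne 0$. Thus $U_v v = \alpha 1_A$, and $\widetilde\chi := \alpha^{-1}U_v \in \Str(A)$ sends $v$ to $1_A$.

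The main obstacle is the separable case, which has two delicate inputs. First, the fact that a first Tits construction is a first Tits construction at \emph{every} separable cubic subfield. This rests on the criterion that the Galois closure of $L$ splits $A$ \cite[Thm.~4.4]{MR734841}. The alternative route is via Thm.~\ref{normic}: $\Oct(A)$ is split, so it contains every quadratic étale algebra, in particular $\Delta(L)$. Second, the extension of autotopies from $S$ to $A$. If I had to avoid \cite{MR4009439}, I would write $A = J(D,\mu)$ with $D$ containing $L$, using Thm.~\ref{t.KUMFIT}, and build the autotopy directly in the form $x_0 + x_1j_1 + x_2j_2 \mapsto wx_0 + (x_1 w)j_1 + (w^{-1}x_2 w^\sharp)j_2$ on $J(D,\mu)$. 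One would then check norm similarity using \eqref{FITNOR} and $N_D(w^{-1}x_2w^\sharp) = N_D(w)\,N_D(x_2)$.
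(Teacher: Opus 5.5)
Your argument is correct. At this point the paper gives no proof of Thm.~\ref{t.FITIHO}; it simply quotes \cite[Cor.~4.9]{MR734841}.

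What you have written is almost verbatim the homogeneity step in the paper's proof of the ``if'' direction of Thm.~\ref{embed}. It has the same three ingredients:
\begin{itemize}
\item the autotopy $\chi$ of $J(L,\lambda)$ from \cite[Exc.~42.25]{MR4806163};
\item its extension to $\Str(A)$ via \cite[Thm.~4.4]{MR4009439};
\item the $U_v$-trick when $k[v]$ is purely inseparable.
\end{itemize}
The hypothesis of that theorem is supplied either by its ``only if'' direction \cite[Cor.~4.5]{MR734841}, or by Thm.~\ref{normic}, since a split $\Oct(A)$ contains every quadratic \'etale algebra. Neither input is derived from Thm.~\ref{t.FITIHO} in the paper, so there is no circularity.

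Your route makes explicit the elementwise statement that $A^{(v)}\cong A$ as soon as $A$ is a first Tits construction at $k[v]$. It also shows that Thm.~\ref{embed} and Thm.~\ref{t.FITIHO} rest on the same mechanism. The cost is dependence on the much later extension theorem for structure groups of nine-dimensional subalgebras. The quoted result, by contrast, stays entirely within the theory of \cite{MR734841}.

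One caveat concerns your proposed way of avoiding \cite{MR4009439}. It needs $v$ to lie in a subalgebra $D^{(+)}\subseteq A$ with $D$ central simple associative of degree $3$.
\begin{itemize}
\item When $L=k[v]$ is cyclic, this is automatic. Then $J(L,\lambda)\cong D^{(+)}$ for the cyclic algebra $D=(L/k,\sigma,\lambda)$, and Thm.~\ref{t.KUMFIT} gives $A\cong J(D,\mu)$. Your map is indeed a norm similarity; the trace term transforms correctly because $T_D(wx_0x_1x_2w^\sharp)=T_D(x_0x_1x_2w^\sharp w)=N_D(w)T_D(x_0x_1x_2)$.
\item When $L$ is not cyclic, $J(L,\lambda)\cong H(B,\tau)$ with $\Cent(B)\cong\Delta(L)$ a field. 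Nothing in your sketch ensures that $L$ lies in any subalgebra of the form $D^{(+)}$, so in that case the fallback does not yet replace the extension theorem.
\end{itemize}
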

\noindent We will see in a moment that the analogue of this theorem for Freudenthal division algebras of dimension $9$ does not hold. In order to prove this, we combine \eqref{FRENIN} with the existence of distinguished involutions in \ref{ss.ISDINV} and with Thm.~\ref{t.CHADIS} to obtain

\begin{prop} \label{p.FREDIV}
Every Freudenthal division $k$-algebra of dimension $9$ has an isotope that is a first Tits construction. \hfill $\square$   
\end{prop}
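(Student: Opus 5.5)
The plan is to realize $J$ via \eqref{FRENIN} and then replace the given involution by a distinguished one using \ref{ss.ISDINV}; Thm.~\ref{t.CHADIS} then finishes the job. Let $J$ be a Freudenthal division $k$-algebra of dimension $9$. By \eqref{FRENIN} there is a central simple associative $k$-algebra $(B,\tau)$ of degree $3$ with unitary involution such that $J \cong H(B,\tau)$; put $K := \Cent(B)$. Since $B$ admits the $K/k$-involution $\tau$, the existence result recalled in \ref{ss.ISDINV} provides a distinguished $K/k$-involution $\tau'$ of $B$.

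Next we realize $\tau'$ as a twist of $\tau$. By the surjectivity statement recalled in \ref{ss.ISDINV}, the assignment $p \mapsto \tau^p$ maps $H(B,\tau)^\times$ onto the set of all $K/k$-involutions of $B$. Hence we can choose $p \in H(B,\tau)^\times$ with $\tau' = \tau^p$. Then \ref{ss.ISDINV} (via \cite[31.15]{MR4806163}) gives
\[
H(B,\tau)^{(p)} \cong H(B,\tau^p) = H(B,\tau').
\]
Transporting $p$ along the isomorphism $J \cong H(B,\tau)$ yields an invertible element $q \in J$ with $J^{(q)} \cong H(B,\tau')$. Here we use that isotopes are compatible with isomorphisms: if $\varphi\: J \to J'$ is an isomorphism, then $\varphi$ is also an isomorphism $J^{(q)} \to J'^{(\varphi(q))}$.

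Since $\tau'$ is distinguished, Thm.~\ref{t.CHADIS} gives a cubic \'etale $k$-algebra $E$ and $\mu \in k^\times$ with $H(B,\tau') \cong J(E,\mu)$. Therefore $J^{(q)} \cong J(E,\mu)$ is a first Tits construction. Because $J^{(q)}$ is again a division algebra, Prop.~\ref{p.DIFITI} shows that $E$ is in fact a field, although the statement does not require this.

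The argument is formal once the cited facts are in place, so I expect no real obstacle. The only points needing care are that the surjectivity $p \mapsto \tau^p$ is stated for symmetric invertible $p$, which matches $J^\times$, and the bookkeeping of whether the isotope is taken at $p$ or at $p^{-1}$. The latter is harmless, since $p$ ranges over all of $J^\times$.
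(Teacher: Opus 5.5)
Your proposal is correct and follows essentially the paper's own argument: realize $J$ as $H(B,\tau)$ via \eqref{FRENIN}, obtain a distinguished involution as a twist $\tau^p$ with $J^{(p)}\cong H(B,\tau^p)$ using \ref{ss.ISDINV}, and conclude with Thm.~\ref{t.CHADIS}. Your handling of transport along $J\cong H(B,\tau)$ and your remark that $E$ is a field via Prop.~\ref{p.DIFITI} are harmless extras.
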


\begin{eg} \label{NINEFI}
(a) Let $J$ be a Freudenthal division $k$-algebra which is not a first Tits construction. By Prop.~\ref{p.FREDIV}, neither $J$ nor any of its isotopes is homogeneous. In particular, there are Freudenthal division algebras of dimension $9$ that are first Tits constructions but not homogeneous. $\vssp$ \\
(b) How do we find Freudenthal division algebras of dimension $9$ that do not arise from the first Tits construction? In order to clarify this, we return to the Albert division algebra $A$ over the field $F$ in Example~\ref{e.ALGEMA} and pick a Freudenthal subalgebra $J \subseteq A$ of dimension $9$ \cite[Thm.~45.11]{MR4806163}. Given any separable cubic subfield $L \subseteq J$, $A$ is not a first Tits construction at $L$ (Example~\ref{e.ALGEMA}), so in particular, there is no $a \in F^\times$ having $J \cong J(L,a)$. Thus $J$ is not a first Tits construction. 
\end{eg}

\noindent We now proceed to further criteria for Freudenthal algebras to be homogeneous. In order to phrase the main result that we are going to obtain along the way in an adequate manner, a conceptual preparation will be needed.

\subsection{Quasi-homogeneous Freudenthal algebras} \label{ss.QUAFRE} A proper Freudenthal $k$-algebra $J$ is said to be \emph{quasi-homogeneous} if its invariants do not change when passing to an isotope. Writing $2^r$ for the co-ordinate dimension of $J$ and noting that $f_r(J)$ as well as $g_2(J)$ for $r = 1$, $g_3(J)$ for $r = 3$ are isotopy invariants, $J$ is quasi-homogeneous if and only if $f_{r+2}(J^{(p)}) = f_{r+2}(J)$ for all $p \in J^\times$.

Homogeneous proper Freudenthal algebras are clearly quasi-homogeneous. Conversely, since proper Freudenthal algebras that are reduced or have dimension $< 27$ are classified by their invariants, they are quasi-homogeneous if and only if they are homogeneous. On the other hand, since we do not know whether Albert algebras are classified by their invariants, we do not know, either, whether quasi-homogeneous Albert algebras that are not homogeneous really exist; if they do, they must be division algebras.  \lz

\noindent With this terminology, we can now state the main result of this section.

\begin{thm} \label{t.HOMEN}
\emph{(a)} If all Freudenthal $k$-algebras of dimension $15$ are homogeneous, then all Albert algebras over $k$ are quasi-homogeneous. $\vssp$ \\
\emph{(b)} If all reduced Freudenthal $k$-algebras of dimension $9$ are homogeneous, then so are all Freudenthal algebras of dimension $\ge 9$ over $k$. $\vssp$ \\
\emph{(c)} If $\Her_3(k)$ is homogeneous, then so are all Freudenthal algebras over $k$.
\end{thm}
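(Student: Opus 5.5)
The plan is to reduce everything to one observation about the top invariant $\bmod\,2$ of reduced algebras: homogeneity of $\Her_3(C')$ propagates to $\Her_3(C)$ for every composition algebra $C \supseteq C'$, at least at the level of invariants. Suppose $\Her_3(C')$ is homogeneous, with $C'$ regular of dimension $2^s$. By Cor.~\ref{c.JOMACL} every $\Her_3(C',\Gamma)$ is an isotope of $\Her_3(C')$, hence isomorphic to it. Since $f_{s+2}$ is an invariant, \eqref{TOPPFI} gives
\[
\dla -\gamma_1,-\gamma_2\dra \otimes n_{C'} \cong \dla -1,-1\dra \otimes n_{C'} \quad \text{for all } \gamma_1,\gamma_2 \in k^\times.
\]
Now let $C \supseteq C'$ be a composition algebra. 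By the Cayley--Dickson doubling, $n_C \cong n_{C'} \otimes \pi$ for some Pfister form $\pi$. Tensoring the displayed isometry with $\pi$ yields $f_{r+2}(\Her_3(C,\Gamma)) \cong \dla -1,-1\dra \otimes n_C$ for every $\Gamma$. In words: all regular reduced Freudenthal algebras with coordinate algebra $C$ share the same $f_{r+2}$.

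From this we get quasi-homogeneity for arbitrary proper $J$ with coordinate algebra $C$, reduced or not. Take $p \in J^\times$. Since $f_r$ is an isotopy invariant, $(J^{(p)})_{\red}$ again has coordinate algebra $C$, so by Cor.~\ref{c.JOMACL} and \ref{ss.SIREFR} it is of the form $\Her_3(C,\Gamma')$. Then \eqref{JAYRET} together with the previous paragraph gives $f_{r+2}(J^{(p)}) = f_{r+2}(J)$. For reduced algebras and for algebras of dimension $<27$, the invariants classify, as recalled in \ref{ss.QUAFRE}. Hence in those cases quasi-homogeneity already means homogeneity.

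\emph{(a)} The dimension-$15$ algebras are the $\Her_3(Q)$, $Q$ quaternion. Every regular octonion algebra $C$ contains a quaternion subalgebra $Q$, so the above applies with $C' = Q$ and all Albert algebras are quasi-homogeneous.

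\emph{(b)} Every regular composition algebra of dimension $\ge 2$ contains a quadratic \'etale subalgebra $K$, and $\Her_3(K)$ is reduced of dimension $9$. So all Freudenthal algebras of dimension $\ge 9$ are quasi-homogeneous. They are therefore homogeneous except possibly for Albert division algebras $A$. For such $A$, pick a nine-dimensional Freudenthal subalgebra $J_0 \subseteq A$ \cite[Thm.~45.11]{MR4806163}. Since $J_0$ is quasi-homogeneous of dimension $9$, it is homogeneous. Prop.~\ref{p.HOMDIS}~(i)$\Rightarrow$(v) then shows that $A$ is a first Tits construction, and Thm.~\ref{t.FITIHO} shows that $A$ is homogeneous.

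\emph{(c)} Apply the basic observation with $C' = k$: homogeneity of $\Her_3(k)$ forces $\dla -\gamma_1,-\gamma_2\dra$ to be independent of $\Gamma$. It follows that all reduced nine-dimensional algebras are homogeneous, and (b) takes care of dimensions $\ge 9$. Dimension $6$ is the case $\Her_3(k)$ itself. In dimensions $1$ and $3$, $J = E^{(+)}$ is commutative associative, and multiplication by $p$ is an isomorphism $J^{(p)} \to J$.

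The step I expect to be the main obstacle is characteristic $2$. There $\Her_3(k)$ is not regular, so $f_2$ in \eqref{TOPINV} is unavailable, and the argument for (c) must instead be run with the forms $\dla -\gamma_1,-\gamma_2\dra$ read off directly from $\Gamma$. I would try first to extract the needed independence from Cor.~\ref{c.JOMAIS}, i.e.\ from the existence of $g$ with $g p \bar g^\trans \in k\Eins_3$. A second point to check is that in (b) the nine-dimensional subalgebra $J_0$ of a division algebra is itself a division algebra; it is covered because dimension-$9$ algebras are classified by $f_1,f_3,g_2$ and $g_2$ is an isotopy invariant.
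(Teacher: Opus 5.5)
Parts (a) and (b) are sound. Your ``basic observation'' amounts to Props.~\ref{p.HOREOR} and \ref{p.QUAHOM} combined. For Albert division algebras in (b) you take a slightly different route from the paper. The paper uses Prop.~\ref{p.ETAHOM} to conclude directly that $\Oct(A)\supseteq K$ is split. You instead go through a nine-dimensional subalgebra $J_0$ and Prop.~\ref{p.HOMDIS}~(i)$\Rightarrow$(v). Both work. Your closing worry about $J_0$ being a division algebra is beside the point: you only need $J_0$ homogeneous, and you already have that. Your handling of dimensions $1,3,6$ in (c) is fine.

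The genuine gap is (c) in characteristic $2$, which you flag but do not close. There $k$ is not a regular composition algebra and $\Her_3(k)$ is not regular, so $f_2$ is undefined and your basic observation with $C'=k$ cannot be run. Consequently nothing in your write-up passes from homogeneity of $\Her_3(k)$ to homogeneity of $\Her_3(K)$ for quadratic \'etale $K$, which is exactly the input (b) needs.

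The fix is to drop the invariants in (c), as the paper does. Given a diagonal $\Gamma$ and $p:=\sum\gamma_ie_{ii}$, Cor.~\ref{c.JOMACL} and the hypothesis give $\Her_3(k,\Gamma)\cong\Her_3(k)$. Cor.~\ref{c.JOMAIS}, which only needs $\dim C\le 2$ and not regularity, then yields $g\in\GL_3(k)$ with $gpg^\trans\in k\Eins_3$. Viewed in $\GL_3(K)$, $g$ satisfies $\bar g=g$ because its entries lie in $k$, so $gp\bar g^\trans\in k\Eins_3$ there. Cor.~\ref{c.JOMAIS} for $C=K$ then gives $\Her_3(K,\Gamma)\cong\Her_3(K)$. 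By Cor.~\ref{c.JOMACL} this makes $\Her_3(K)$ homogeneous, and (b) finishes. The argument is characteristic-free, so it also replaces your form-theoretic argument in characteristic $\ne 2$. Alternatively, in characteristic $2$, Prop.~\ref{p.BATWOM} forces $k$ to be perfect, and then $\Her_3(K,\Gamma)\cong\Her_3(K)$ because the $\gamma_i$ can be rescaled by squares.
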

\lz 

\noindent The proof of this theorem rests on a number of criteria spelling out necessary and/or sufficient conditions for individual Freudenthal algebras to be (quasi-)homogeneous. 

\begin{prop} \label{p.HOINTW}
A proper Freudenthal $k$-algebra of coordinate dimension $2^r$ is quasi-homogeneous if and only if $f_{r+2}(J^{(p)})$ is hyperbolic for all $p \in J^\times$.    
\end{prop}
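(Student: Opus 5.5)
The plan is to reduce the statement to one existence claim: \emph{every proper Freudenthal $k$-algebra $J$ of coordinate dimension $2^r$ has some isotope $J^{(q)}$ with $f_{r+2}(J^{(q)})$ hyperbolic.}

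Granting this claim, both directions are formal. As recalled in \ref{ss.QUAFRE}, quasi-homogeneity of $J$ means $f_{r+2}(J^{(p)}) = f_{r+2}(J)$ for all $p \in J^\times$. The other invariants ($f_r$, and $g_2$ resp.\ $g_3$) are isotopy invariants, so they cause no trouble.
\begin{itemize}
\item[($\Leftarrow$)] Suppose $f_{r+2}(J^{(p)})$ is hyperbolic for every $p$. Taking $p = 1_J$ shows $f_{r+2}(J)$ is hyperbolic as well. All these forms are hyperbolic Pfister forms of the same dimension, hence equal.
\item[($\Rightarrow$)] Suppose $J$ is quasi-homogeneous. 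Then $f_{r+2}(J^{(p)}) = f_{r+2}(J) = f_{r+2}(J^{(q)})$ for all $p$, with $q$ as in the claim, and the last form is hyperbolic.
\end{itemize}
Note that $J^{(q)}$ again has coordinate dimension $2^r$, since $f_r$ is an isotopy invariant.

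For the claim I would split into cases.

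\emph{$J$ reduced.} By Cor.~\ref{c.JOMACL}, all $\Her_3(C,\Gamma)$, with $\Gamma$ diagonal, occur as isotopes of $J \cong \Her_3(C,\Gamma_0)$. Take $\Gamma = \diag(-1,\gamma_2,1)$. Formula \eqref{TOPPFI} gives
\[
f_{r+2} = \dla 1,-\gamma_2\dra \otimes n_C,
\]
and this contains the hyperbolic binary factor $\la 1,-1\ra$, so it is hyperbolic. If $J$ is not regular, then $C = k$ in characteristic $2$, and $f_{r+2}$ is only defined for regular $J$, so this case has to be treated separately or excluded. I expect this to be a minor technicality.

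\emph{$J$ a division algebra of dimension $9$.} Prop.~\ref{p.FREDIV}, or directly the existence of distinguished involutions in \ref{ss.ISDINV} together with $J^{(p)} \cong H(B,\tau^p)$ and the surjectivity of $p \mapsto \tau^p$, yields an isotope $H(B,\tau^q)$ with $\tau^q$ distinguished. By definition this means $f_3$ is hyperbolic.

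\emph{$J = A$ an Albert division algebra.} This is the main obstacle. First pick a nine-dimensional subalgebra $H(B,\tau)\subseteq A$ \cite[Thm.~45.11]{MR4806163}. Then Thm.~\ref{t.ETSETI} gives $A \cong J(B,\tau,u,\mu)$. Next choose $q \in H(B,\tau)^\times$ with $\tau^q$ distinguished. The isotope $A^{(q)}$ should again be a second Tits construction based on $(B,\tau^q)$ with a suitably modified admissible scalar; I would verify this via the standard isotopy formulas for the second Tits construction \cite[Chap.~VII]{MR4806163}. Finally, invoke \cite[Thm.~4.4]{MR2063796}, as in the proof of Cor.~\ref{f5}: an Albert algebra which is a second Tits construction on a distinguished involution has $f_5 = 0$.

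An alternative route for this case would use $f_{r+2}(A^{(p)}) = f_{r+2}((A^{(p)})_{\red})$ and check that $(A^{(p)})_{\red}$ is an isotope of $A_{\red}$, reducing to the reduced case. That, however, needs compatibility of reduced models with isotopy, which I would only fall back on if the Tits-construction computation becomes unwieldy.
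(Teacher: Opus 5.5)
Your argument is correct and follows the paper's skeleton. You make the same reduction to exhibiting one isotope with hyperbolic $f_{r+2}$, and you treat nine-dimensional division algebras the same way, via distinguished involutions. The differences lie in the two cases the paper settles by citation.

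For reduced $J$ the paper quotes \cite[Thm.~41.26]{MR4806163}. Your explicit choice $\Gamma=\diag(-1,\gamma_2,1)$ in \eqref{TOPPFI} produces a Pfister form with the metabolic factor $\dla 1\dra$, which makes this case self-contained. You only need to add that isotopy is transitive, so the isotopes of $\Her_3(C,\Gamma_0)$ are those of $\Her_3(C)$ described in Cor.~\ref{c.JOMACL}.

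For Albert division algebras the paper quotes \cite[Thm.~4.7]{MR2063796}, which is exactly the existence claim you need. You instead reconstruct it from a nine-dimensional subalgebra and \cite[Thm.~4.4]{MR2063796}. Three comments on this.

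First, no isotopy formulas for the second Tits construction are needed. Since $q\in H(B,\tau)$, the isotope $H(B,\tau)^{(q)}\cong H(B,\tau^q)$ is a subalgebra of $A^{(q)}$, so Thm.~\ref{t.ETSETI} applies directly.

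Second, and more importantly, you need the implication ``second Tits construction over a distinguished involution $\Rightarrow f_5=0$''. This is the converse of the direction of \cite[Thm.~4.4]{MR2063796} used in Cor.~\ref{f5}. So ``as in the proof of Cor.~\ref{f5}'' does not justify it, and you must make sure the cited theorem is an equivalence. The implication does hold:
\begin{itemize}
\item extend scalars to a cubic subfield $L$ of the division algebra $H(B,\tau^q)$;
\item coordinatize the reduced algebra $(A^{(q)})_L$ using a frame and connecting elements taken from $H(B,\tau^q)_L$, so that $f_5$ becomes a multiple of the hyperbolic form $f_3(H(B,\tau^q)_L)$;
\item descend with Springer's theorem, since $[L:k]$ is odd.
\end{itemize}

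Third, your fallback route via reduced models does not work as stated. Knowing that each $(A^{(p)})_{\red}$ is an isotope of $A_{\red}$ (as in the proof of Prop.~\ref{p.HOREOR}) does not show that the particular isotope of $A_{\red}$ with hyperbolic $f_5$ arises as some $(A^{(p)})_{\red}$.
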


\begin{proof}
Let $J$ be a proper Freudenthal algebra over $k$ that is quasi-homogeneous. It suffices to show that $f_{r+2}(J^{(p)})$ is hyperbolic for some $p \in J^\times$. If $J$ is reduced, this follows from \cite[Thm.~41.26]{MR4806163}. If $J$ is a division algebra of dimension $9$ given in the form \eqref{FRENIN}, this follows from the existence of distinguished involutions (see \ref{ss.ISDINV}). And finally, if $J$ is an Albert division algebra, this follows from \cite[Thm.~4.7]{MR2063796}.    
\end{proof}

\begin{prop} \label{p.HOREOR}
\emph{(a)} Let $J$ be a Freudenthal $k$-algebra of dimension $9$. If $J_{\red}$ is homogeneous, then $J$ is homogeneous. $\vssp$ \\
\emph{(b)} Let $A$ be an Albert $k$-algebra. If $A_{\red}$ is homogeneous, then $A$ is quasi-homogeneous.
\end{prop}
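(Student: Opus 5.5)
The plan is to reduce both parts to Prop.~\ref{p.HOINTW} via the invariants $\bmod\,2$, which are read off from reduced models. Let $J$ be a proper Freudenthal algebra of coordinate dimension $2^r$ (so $r=1$ in (a), $r=3$ in (b)). By \eqref{JAYRET}, $f_{r+2}(J^{(p)}) = f_{r+2}((J^{(p)})_{\red})$ for every $p \in J^\times$. The key step will therefore be to show that, for every $p \in J^\times$, there is some $q \in (J_{\red})^\times$ with
\[
(J^{(p)})_{\red} \cong (J_{\red})^{(q)},
\]
that is, that the reduced model of an isotope of $J$ is an isotope of the reduced model of $J$.

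Granting this, the argument runs as follows. By hypothesis $J_{\red}$ is homogeneous, so $(J_{\red})^{(q)} \cong J_{\red}$, and hence $f_{r+2}(J^{(p)}) = f_{r+2}(J_{\red}) = f_{r+2}(J)$ for all $p$. In other words, $J$ is quasi-homogeneous, which is exactly (b). For (a), recall from \ref{ss.QUAFRE} that nine-dimensional Freudenthal algebras are classified by $f_1,f_3,g_2$, and that $f_1$ and $g_2$ are isotopy invariants. It follows that quasi-homogeneity and homogeneity coincide in dimension $9$, and (a) follows. (Alternatively, in (a) one may argue directly: all $K/k$-involutions of $B$ then share the same $f_3$, so by \cite[Thm.~(19.6)]{MR1632779} they are all conjugate, and Prop.~\ref{p.HOMDIS} applies.)

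The main obstacle is the key step. My plan is to use the characterization of the reduced model together with an explicit splitting field. Choose a field extension $K/k$ that reduces $J$ and is such that $f_{r+2}$ is detected over it. Since isotopes of reduced algebras are reduced, $(J^{(p)})_K = (J_K)^{(p)}$ is reduced, and $J_K \cong (J_{\red})_K$. The cleanest route, however, is probably via invariants: by \cite[Thm.~41.8]{MR4806163}, proper reduced algebras with the same $f_r$ are isotopic. Now $f_r$ is an isotopy invariant of $J$, so $f_r((J^{(p)})_{\red}) = f_r(J^{(p)}) = f_r(J) = f_r(J_{\red})$. Hence $(J^{(p)})_{\red}$ is a reduced algebra with the same $f_r$ as $J_{\red}$, and is therefore an isotope of $J_{\red}$. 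This settles the key step without any descent argument; the only remaining point to check is that \eqref{JAYRET} is compatible with this, i.e.\ that the definition of $f_r$ on $J$ really is via $J_{\red}$ and is isotopy invariant, as stated in \ref{ss.QUAFRE}.
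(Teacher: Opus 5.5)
Your proposal is correct. Once you settle on the invariant route for the key step, it is exactly the paper's argument: $f_r$ is an isotopy invariant, so $(J^{(p)})_{\red}$ and $J_{\red}$ share $f_r$, hence are isotopic by the classification of reduced algebras up to isotopy, hence isomorphic since $J_{\red}$ is homogeneous, which yields $f_{r+2}(J^{(p)}) = f_{r+2}(J)$. Part (a) then follows, as in the paper, because quasi-homogeneity and homogeneity coincide in dimension $9$; the splitting-field idea you float first is not needed.
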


\begin{proof}
We treat both cases simultaneously by considering a Freudenthal $k$-algebra $J$ of coordinate dimension $2^r$, $r = 1,3$. We must show that $f_{r+2}(J^{(p)}) = f_{r+2}(J)$ for all $p \in J^\times$. Since $f_r$ is an isotopy invariant, we may apply \eqref{JAYRET} and obtain
\[
f_r\big((J^{(p)})_{\red}\big) = f_r(J^{(p)}) =f_r(J) = f_r(J_{\red}).
\]
But $f_r$ is a \emph{classifying} isotopy invariant of reduced proper Freudenthal algebras. It follows that $(J^{(p)})_{\red}$ and $J_{\red}$ are isotopic, hence isomorphic since $J_{\red}$ is homogeneous by hypothesis. This implies 
\[
f_{r+2}(J^{(p)}) = f_{r+2}\big((J^{(p)})_{\red}\big) = f_{r+2}(J_{\red}) = f_{r+2}(J),
\]
as desired.
\end{proof}
\lz

\noindent In a way, Prop.~\ref{p.HOREOR} reduces the study of arbitrary proper homogeneous Freudenthal algebras to that of reduced ones. For the rest of this section, we feel therefore justified to restrict our attention to this special case. \lz

\noindent We denote by $I$ the fundamental ideal in the Witt ring $W(k)$, i.e., the ideal generated by the Pfister bilinear forms $\langle 1, -a \rangle = \dla a\dra$, for $a\in k^\times$. For a regular quadratic form $q$ over $k$, 
\begin{align*}
\mathrm{Ann}_{W(k)}(q):=\{c\in W(k)|c.q = 0\;\text{in the Witt group of $k$}\},    
\end{align*}
is the \emph{annihilator of $q$} in $W(k)$. We can now prove:  
 

\begin{prop} \label{p.redfr}
If $C$ is a regular composition algebra over $k$ and $J := \Her_3(C)$, then the following conditions are equivalent.
\begin{itemize}
\item [(i)] $J$ is homogeneous.

\item [(ii)] For all $\gamma,\delta \in k^\times$, the Pfister quadratic form $\langle\langle\gamma,\delta\rangle\rangle \otimes n_C$ is hyperbolic.

\item [(iii)] For all $\gamma\in k^\times$, the Pfister quadratic form $\dla \gamma\dra \otimes n_C = n_C \oplus (-\gamma)n_C$ is universal.

\item[(iv)] $I^2 \subseteq \mathrm{Ann}_{W(k)}(n_C)$.
\end{itemize}
\end{prop}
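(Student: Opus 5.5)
The plan is to reduce homogeneity to a statement about the invariants mod $2$ and then translate it into Pfister form language. By Cor.~\ref{c.JOMACL}, the isotopes of $J=\Her_3(C)$ are, up to isomorphism, exactly the algebras $\Her_3(C,\Gamma)$ with $\Gamma=\diag(\gamma_1,\gamma_2,\gamma_3)\in\GL_3(k)$. Moreover, by \cite[Exc.~37.22~(b)]{MR4806163} we may take $\gamma_3=1$. Since $C$ is regular, every such algebra is regular. Hence, by the classification statement in \ref{ss.INMOTW}, it is determined by the pair $f_r,f_{r+2}$. The invariant $f_r=n_C$ is an isotopy invariant, so $J$ is homogeneous if and only if $f_{r+2}(\Her_3(C,\Gamma))=f_{r+2}(J)$ for all $\Gamma$.

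For (i)$\Leftrightarrow$(ii), we use \eqref{TOPPFI}. With $\Gamma=\Eins_3$ it gives $f_{r+2}(J)=\dla -1,-1\dra\otimes n_C$. For general $\Gamma$ it gives $f_{r+2}(\Her_3(C,\Gamma))=\dla -\gamma_1,-\gamma_2\dra\otimes n_C$. Thus (i) says that $\dla -\gamma_1,-\gamma_2\dra\otimes n_C\cong\dla -1,-1\dra\otimes n_C$ for all $\gamma_1,\gamma_2\in k^\times$. Granting this, we may put $\gamma_1=-1$, so that $\dla 1,\ast\dra$ is hyperbolic. It follows that $\dla -1,-1\dra\otimes n_C$ is hyperbolic, and therefore all $\dla\gamma,\delta\dra\otimes n_C$ are hyperbolic (reparametrize $\gamma=-\gamma_1$, $\delta=-\gamma_2$); this is (ii). Conversely, (ii) makes all these forms hyperbolic, hence isomorphic, which gives (i). Alternatively, one could invoke Prop.~\ref{p.HOINTW} directly.

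For (ii)$\Leftrightarrow$(iii), we use the standard fact that a Pfister form $\varphi$ is isotropic iff hyperbolic, and that $\dla\delta\dra\otimes\varphi=\varphi\perp(-\delta)\varphi$ is hyperbolic iff $\delta$ is represented by $\varphi$, i.e.\ $\delta\in D(\varphi)$ (\cite[Cor.~9.9]{MR2427530}). Apply this with $\varphi=\dla\gamma\dra\otimes n_C$. Then (ii) holds for all $\delta$ iff $D(\varphi)\supseteq k^\times$, that is, iff $\varphi$ is universal. The case where $\varphi$ itself is hyperbolic is consistent, since hyperbolic forms are universal. For (ii)$\Leftrightarrow$(iv), recall that $I^2$ is additively generated by the $2$-fold bilinear Pfister forms $\dla\gamma,\delta\dra$. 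Since the map $c\mapsto c\cdot n_C$ is additive, $I^2\cdot n_C=0$ in $W(k)$ iff every $\dla\gamma,\delta\dra\otimes n_C$ is Witt-trivial. A regular quadratic form that is Witt-trivial is hyperbolic, which gives the equivalence.

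The main obstacle is the small cases and characteristic~$2$. When $C=k$ and $\ch(k)=2$, $n_C=\la 1\ra$ is not regular, but such $C$ is excluded by hypothesis. When $r=0$ in characteristic not~$2$, the form $f_2$ is just the norm of a quaternion algebra, and the classification in \ref{ss.INMOTW} still applies. In characteristic~$2$, I will need to make sure that the sign conventions for $\dla\cdot\dra$ and the statement ``$\dla\delta\dra\otimes\varphi$ is hyperbolic iff $\delta\in D(\varphi)$'' are taken in the form valid for quadratic Pfister forms tensored with bilinear Pfister forms, as in \cite{MR2427530}. I will also check that the identification $\dla -\gamma_1,-\gamma_2\dra=\la 1,\gamma_1,\gamma_2,\gamma_1\gamma_2\ra$ in \eqref{TOPPFI} matches the convention $\dla a\dra=\la 1,-a\ra$ used in (iv).
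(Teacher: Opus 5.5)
Your proposal is correct and follows the paper's proof. It uses Cor.~\ref{c.JOMACL} together with \eqref{TOPPFI} and the classification by the invariants $\bmod\,2$ for (i)$\Leftrightarrow$(ii), the criterion that $\dla\delta\dra\otimes\varphi$ is hyperbolic iff $\delta\in D(\varphi)$ for a Pfister form $\varphi$ for (ii)$\Leftrightarrow$(iii), and the additive generation of $I^2$ by the forms $\dla\gamma,\delta\dra$ for (ii)$\Leftrightarrow$(iv). The only small difference is that, instead of quoting Prop.~\ref{p.HOINTW}, you exhibit directly the isotope with $\gamma_1=-1$, whose $(r+2)$-invariant is visibly hyperbolic; this makes that step self-contained.
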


\begin{proof}
Let $\dim_k(C) = 2^r$ and note by Cor.~\ref{c.JOMACL} that the isotopes of $J$ up to isomorphism are precisely of the form $\Her_3(C,\Gamma)$ for some diagonal matrix $\Gamma \in \GL_3(k)$. Hence (i)$\Leftrightarrow$(ii) follows from Prop.~\ref{p.HOINTW} and \eqref{TOPPFI}. For $\gamma,\delta \in k^\times$ and $q := \dla\gamma\dra \otimes n_C$ we have $\dla\gamma,\delta\dra \otimes n_C = \dla\delta\dra \otimes n_C$. Hence (ii)$\Leftrightarrow$(iii) follows from \cite[Prop.~9.8]{MR2427530}. And, finally, (ii)$\Leftrightarrow$(iv) follows from the fact that $I^2$ is generated by $\dla\gamma,\delta\dra$, $\gamma,\delta \in k^\times$.
\end{proof}

\begin{cor} \label{c.FREHOM}
Split Freudenthal algebras of dimension other than $6$ are homogeneous.    
\end{cor}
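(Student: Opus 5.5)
Split Freudenthal algebras have dimension $3$ (namely $(k\times k\times k)^{(+)}$) or are proper of dimension $9$, $15$ or $27$ (namely $\Her_3(C)$ with $C$ split of dimension $2,4,8$). Dimension $1$ is the base field $k$ itself. The plan is to dispose of the non-proper cases directly and to handle the proper cases by Prop.~\ref{p.redfr}.

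\textbf{Proper cases.} Let $J = \Her_3(C)$ with $C$ a split composition algebra of dimension $2^r$, $r \ge 1$; then $C$ is regular. By Prop.~\ref{p.redfr} (ii), it suffices to show that $\dla\gamma,\delta\dra\otimes n_C$ is hyperbolic for all $\gamma,\delta\in k^\times$. Since $C$ is split, $n_C$ is a hyperbolic (isotropic) Pfister form. Hence $n_C \cong \dla 1\dra\otimes(\cdots)$ is zero in the Witt group, and any multiple $b\otimes n_C$ by a bilinear form $b$ is again hyperbolic. Alternatively, use (iii): $n_C$ is already universal, so $n_C\perp(-\gamma)n_C$ is universal too. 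Either way homogeneity follows. This is exactly why dimension $6$ is excluded: there $C=k$, and $n_C=\la1\ra$ is not hyperbolic, so the criterion genuinely depends on $k$.

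\textbf{Non-proper cases.} Dimension $1$: $J=k$, and its isotopes $k^{(p)}$ are isomorphic to $k$ via $x\mapsto px$. Indeed, in $k^{(p)}$ we have $U^{(p)}_x y = x^2 p y$, and the map $x\mapsto px$ sends $U_x y = x^2 y$ to $p x^2 y = (px)^2 p^{-1}\cdots$; the cleanest route is to note that $\Str(k)=k^\times$ acts transitively on $k^\times$. Dimension $3$: $J=E^{(+)}$ with $E=k\times k\times k$ associative commutative. For any $p\in E^\times$, multiplication by $p^{-1}$ is an autotopy, since $U_x = L_x^2$ in $E^{(+)}$, so $L_{p^{-1}}U_x = U_{p^{-1}x}L_p$. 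Thus $\Str(J)$ acts transitively on $J^\times$, and $J$ is homogeneous.

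\textbf{Main obstacle.} The only step needing care is confirming that split $\Her_3(C)$ for $r\ge1$ is exactly the split Freudenthal algebra of that dimension, and that hyperbolicity of $n_C$ propagates under multiplication by Pfister bilinear forms. Both are standard (a hyperbolic plane times any form is hyperbolic), so the proof should be short.
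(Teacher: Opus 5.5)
Your proof is correct and follows the paper's own argument. Dimensions $1$ and $3$ are handled directly, and for $\Her_3(C)$ with $C$ split of dimension $\ge 2$ (hence regular) you apply Prop.~\ref{p.redfr}~(ii), using that $n_C$ is hyperbolic. One small slip in your dimension-$1$ aside: with $U^{(p)}_x = U_xU_p$ one has $U^{(p)}_xy = x^2p^2y$, not $x^2py$. This does not affect the proof, since the fallback you use (transitivity of $\Str(k)=k^\times$, equivalently that $x\mapsto p^{-1}x$ is an isomorphism $k\to k^{(p)}$) is correct.
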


\begin{proof}
Freudenthal algebras  of dimension $1$ or $3$ are homogeneous even if they are not split. We are thus reduced to considering split Freudenthal algebras of dimension at least $9$, so let $J = \Her_3(C)$ with $C$ a regular split composition algebra of dimension at least $2$. Since $n_C$ is hyperbolic, condition (ii) of Prop.~\ref{p.redfr} holds trivially, forcing $J$ to be homogeneous. 
\end{proof}

\begin{prop} \label{p.QUAHOM}
For a quaternion $k$-algebra $B$, the following conditions are equivalent. 
\begin{itemize}
\item [(i)] $\Her_3(B)$ is homogeneous.

\item [(ii)] The norm of any octonion algebra over $k$ containing $B$ as a subalgebra is universal.
\end{itemize}
In this case, any Albert algebra over $k$ whose octonion algebra contains $B$ as a subalgebra is quasi-homogeneous.
\end{prop}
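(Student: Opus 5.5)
Apply Prop.~\ref{p.redfr} with $C := B$ and translate its condition (iii) into a statement about octonion algebras containing $B$.

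For the equivalence, recall that quaternion algebras are regular composition algebras, so Prop.~\ref{p.redfr} applies to $J = \Her_3(B)$. By its condition (iii), $J$ is homogeneous if and only if the Pfister $3$-form $\dla\gamma\dra \otimes n_B = n_B \perp (-\gamma)n_B$ is universal for every $\gamma \in k^\times$. By the Cayley--Dickson construction, the octonion algebras over $k$ containing $B$ as a subalgebra are, up to isomorphism, exactly the algebras $\Cent$-free doublings $\mathrm{Cay}(B,\gamma)$, $\gamma \in k^\times$. Their norm is $n_B \perp (-\gamma)n_B$, and this holds in all characteristics, since every composition subalgebra of dimension $4$ in an octonion algebra admits a Cayley--Dickson decomposition \cite[Chap.~IV]{MR4806163}. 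Hence the set of norms of octonion algebras containing $B$ coincides with the set of forms $\dla\gamma\dra\otimes n_B$, $\gamma \in k^\times$, and (i)$\Leftrightarrow$(ii) follows at once. The only point requiring care is that every octonion algebra containing $B$ really is a doubling of $B$ with a nonzero scalar. This amounts to choosing an element orthogonal to $B$ with nonzero norm, which exists by regularity of the norm restricted to $B^\perp$.

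For the final assertion, let $A$ be an Albert $k$-algebra with $B \subseteq \Oct(A) =: C$, and assume (i). The coordinate algebra of the reduced model $A_{\red}$ is $C$. By Prop.~\ref{p.HOREOR}(b), it suffices to show that $A_{\red}$ is homogeneous. Since $A_{\red}$ is reduced, it is isotopic to $\Her_3(C)$ (Cor.~\ref{c.JOMACL}). Homogeneity is an isotopy-invariant property, because isotopes of isotopes are isotopes. So it is enough to show that $\Her_3(C)$ is homogeneous, and by Prop.~\ref{p.redfr}(iii) this means showing that $\dla\gamma\dra\otimes n_C$ is universal for every $\gamma$.

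This step is the main obstacle. We have $n_C = n_B \perp (-\delta) n_B$ for some $\delta$, so
\[
\dla\gamma\dra \otimes n_C = \dla\gamma\dra\otimes n_B \perp (-\delta)\,\dla\gamma\dra\otimes n_B.
\]
The form $\dla\gamma\dra\otimes n_B$ is a universal orthogonal summand, so the larger form is universal as well. Thus $\Her_3(C)$ is homogeneous, hence so is $A_{\red}$, and $A$ is quasi-homogeneous by Prop.~\ref{p.HOREOR}(b). The remaining check is that the isotopy-invariance argument goes through in characteristic $2$, where Cor.~\ref{c.JOMACL} covers the non-regular case separately; but $C$ is regular here, so no issue arises.
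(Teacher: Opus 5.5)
Your proof is correct and follows the paper's argument: you prove (i)$\Leftrightarrow$(ii) via Prop.~\ref{p.redfr} with $C = B$, using that octonion algebras containing $B$ have norms $\dla\gamma\dra\otimes n_B$, and you get the final assertion by reducing to the reduced case with Prop.~\ref{p.HOREOR}~(b) and applying Prop.~\ref{p.redfr} to $C = \Oct(A)$. The only cosmetic difference is that you obtain universality of $\dla\gamma\dra\otimes n_C$ from its universal subform $\dla\gamma\dra\otimes n_B$, whereas the paper uses the universal subform $n_C$ supplied directly by (ii).
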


\begin{proof}
The norm of any octonion algebra as in (ii) has the form $\dla\delta\dra \otimes n_B$ for some $\delta \in k^\times$. Hence (i)$\Leftrightarrow$(ii) follows from Prop.~\ref{p.redfr} for $C = B$. In the final statement, let $A$ be an Albert $k$-algebra such that $B \subseteq C := \Oct(A)$. Since $C \cong \Oct(A_{\red})$ , Prop.~\ref{p.HOREOR}~(b) allows us to assume that $A$ is reduced. Since $n_C$ is universal by (ii), so is $\dla\delta\dra \otimes n_C$ for all $\delta \in k^\times$, and the assertion follows from Prop.~\ref{p.redfr}.   
\end{proof}

\begin{prop} \label{p.ETAHOM}
For a quadratic \'etale $k$-algebra $K$, the following conditions are equivalent.
\begin{itemize}
\item [(i)] $\Her_3(K)$ is homogeneous.

\item [(ii)] There are no octonion division $k$-algebras containing $K$ as a subalgebra.

\item [(iii)] The norm of any quaternion $k$-algebra containing $K$ as a subalgebra is universal.
\end{itemize}
In this case, $\Her_3(C)$ is homogeneous for any composition algebra $C$ over $k$ containing $K$ as a subalgebra.
\end{prop}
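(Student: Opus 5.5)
Apply Prop.~\ref{p.redfr} with $C = K$ and then translate its conditions into statements about quaternion and octonion algebras, in the same spirit as Prop.~\ref{p.QUAHOM}. First observe that $K$ is regular: it is quadratic \'etale, so $n_K$ is a regular $1$-Pfister form. By Prop.~\ref{p.redfr}, (i) holds if and only if $\dla\gamma,\delta\dra\otimes n_K$ is hyperbolic for all $\gamma,\delta\in k^\times$. It also holds if and only if $\dla\gamma\dra\otimes n_K$ is universal for all $\gamma \in k^\times$.

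(i)$\Leftrightarrow$(iii). Every quaternion $k$-algebra $Q$ containing $K$ as a subalgebra arises from $K$ by the Cayley--Dickson process, $Q = \mathrm{Cay}(K,\mu)$. Its norm is therefore $n_Q = \dla\gamma\dra\otimes n_K$ for a suitable $\gamma\in k^\times$ (up to the sign conventions in the symbol). Conversely, every such form is the norm of a quaternion algebra containing $K$. Hence (iii) is literally condition (iii) of Prop.~\ref{p.redfr} for $C=K$.

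(i)$\Leftrightarrow$(ii). Every octonion algebra $C$ containing $K$ contains a quaternion subalgebra containing $K$. Doubling twice, its norm is $n_C = \dla\gamma,\delta\dra\otimes n_K$ for some $\gamma,\delta\in k^\times$, and again every such $3$-Pfister form occurs this way. An octonion algebra is a division algebra if and only if its norm is anisotropic. A Pfister form is anisotropic if and only if it is not hyperbolic. So (ii) says that every $\dla\gamma,\delta\dra\otimes n_K$ is hyperbolic, which is condition (ii) of Prop.~\ref{p.redfr}. The only point needing a little care is that the doubling descriptions work in all characteristics, including characteristic $2$ with $K$ separable. For this I rely on \cite{MR4806163}: composition algebras are determined by their norms, and a subalgebra $D$ of a composition algebra $C$ with $\dim C = 2\dim D$ yields $C\cong\mathrm{Cay}(D,\mu)$.

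For the final statement, let $C \supseteq K$ be a composition algebra. If $\dim C \le 2$, then $C = K$ and there is nothing to show. If $\dim C = 4$, then $n_C = \dla\gamma\dra\otimes n_K$ is universal by (iii). Hence every $\dla\delta\dra\otimes n_C$ is universal, being a form containing the universal $n_C$ as a subform, and Prop.~\ref{p.redfr}~(iii) yields that $\Her_3(C)$ is homogeneous. If $\dim C = 8$, choose a quaternion subalgebra $B$ with $K\subseteq B\subseteq C$. Then $n_C = \dla\delta\dra\otimes n_B$ contains the universal form $n_B$, so $n_C$ is universal, and we conclude as before. Regularity of $C$ is automatic here, since $C$ contains the regular $K$ and has dimension $\ge 2$. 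I expect the main (mild) obstacle to be the bookkeeping that every quaternion (resp.\ octonion) algebra containing $K$ really has norm of the stated form. The remaining steps are direct applications of Prop.~\ref{p.redfr}.
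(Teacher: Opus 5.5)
Your proposal is correct and takes essentially the same route as the paper. Both arguments reduce (i)$\Leftrightarrow$(ii) and (i)$\Leftrightarrow$(iii) to Prop.~\ref{p.redfr} for $C = K$ by writing the norms of quaternion (resp.\ octonion) algebras containing $K$ as $\dla\gamma\dra\otimes n_K$ (resp.\ $\dla\gamma,\delta\dra\otimes n_K$); you also make explicit that every such Pfister form actually occurs, which the paper leaves implicit. The only deviation is the octonion case of the final statement: the paper uses (ii) to see that $C$ is split and invokes Cor.~\ref{c.FREHOM}, while you obtain universality of $n_C$ from an intermediate quaternion subalgebra $K\subseteq B\subseteq C$ and apply Prop.~\ref{p.redfr}~(iii), which works equally well.
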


\begin{proof}
(i)$\Leftrightarrow$(ii). The norm of any octonion algebra $C$ over $k$ containing $K$ as a subalgebra has the form $n_C \cong \dla\gamma,\delta\dra \otimes n_K$ for some $\gamma,\delta \in k^\times$. By Prop.~\ref{p.redfr}, therefore, any such $C$ is split if and only if (i) holds.

(i)$\Leftrightarrow$(iii). The norm of any quaternion algebra $C$ over $k$ containing $K$ as a subalgebra has the form $n_C \cong \dla\gamma\dra \otimes n_K$ for some $\gamma \in k^\times$. By Prop.~\ref{p.redfr}, $n_C$ for any such $C$ is universal if and only if (i) holds.

In the final statement, assume first that $C$ is an octonion algebra. By (ii), $C$ is split, hence so is $\Her_3(C)$ and thus, in particular, $\Her_3(C)$ is homogeneous (Cor.~\ref{c.FREHOM}). On the other hand, if $C$ is a quaternion algebra, then $n_C$ is universal by (iii) and hence so is $\dla\gamma\dra \otimes n_C$, for any $\gamma \in k^\times$. Thus, again, $\Her_3(C)$ is homogeneous by Prop.~\ref{p.redfr}.
\end{proof}

\begin{prop} \label{p.BASHOM}
If $k$ has characteristic not two, then $\Her_3(k)$ is homogeneous if and only if there are no quaternion division algebras over $k$. 
\end{prop}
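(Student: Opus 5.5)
The plan is to apply Prop.~\ref{p.redfr} with $C = k$. Since $\ch(k) \neq 2$, $C = k$ is a regular composition algebra: its norm is $n_k = \la 1\ra$, whose polar form $2xy$ is regular. Condition (iii) of Prop.~\ref{p.redfr} then says that $\Her_3(k)$ is homogeneous if and only if, for every $\gamma \in k^\times$, the binary form $\dla\gamma\dra \otimes \la 1\ra = \la 1,-\gamma\ra$ is universal.

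It remains to show that universality of all forms $\la 1,-\gamma\ra$, $\gamma \in k^\times$, is equivalent to the absence of quaternion division algebras. Here I would use the standard description of quaternion algebras in characteristic not two. Every quaternion $k$-algebra is of the form $(\gamma,\delta)_k$ with norm $\dla\gamma,\delta\dra = \la 1,-\gamma,-\delta,\gamma\delta\ra$. Such an algebra is split if and only if its norm is isotropic, which happens if and only if $\delta$ is a norm from $k(\sqrt{\gamma})$, that is, represented by $\la 1,-\gamma\ra$ (\cite[Chap.~III]{MR2427530} or \cite{MR4806163}). Equivalently, $\dla\gamma,\delta\dra$ is hyperbolic if and only if $\delta \in D(\la 1,-\gamma\ra)$.

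With this in hand, both directions are short. Suppose every $\la 1,-\gamma\ra$ is universal. Then for all $\gamma,\delta$ the form $\dla\gamma,\delta\dra$ is hyperbolic, so every quaternion algebra $(\gamma,\delta)_k$ is split. Conversely, suppose there are no quaternion division algebras. Then every $\dla\gamma,\delta\dra$ is hyperbolic, so every $\delta$ is represented by $\la 1,-\gamma\ra$, and each such form is universal. Alternatively, one can go directly through condition (ii) of Prop.~\ref{p.redfr}, which asks that $\dla\gamma,\delta\dra \otimes n_k = \dla\gamma,\delta\dra$ be hyperbolic for all $\gamma,\delta$, that is, that all quaternion algebras $(\gamma,\delta)_k$ be split. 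This is the cleanest route and the one I would write down.

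There is no real obstacle. The only care needed is confirming the hypothesis ``$C$ regular'' in characteristic not two, and quoting correctly that a quaternion algebra is division if and only if its norm (a 2-Pfister form) is anisotropic, equivalently not hyperbolic.
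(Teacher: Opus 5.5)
Your proposal is correct, and the route you say you would write down is exactly the paper's proof. It applies Prop.~\ref{p.redfr} with $C = k$, which is regular because $\ch(k) \ne 2$. It then uses that $\dla\gamma,\delta\dra \otimes n_k$ is the norm of the quaternion algebra $(\gamma,\delta)_k$, which is hyperbolic if and only if that algebra is split.
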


\begin{proof}
Since $\ch(k) \ne 2$, we identify  quadratic and symmetric bilinear forms over $k$ in the usual way. For any $\gamma,\delta \in k^\times$, the quadratic form $\dla\gamma,\delta\dra \otimes n_k$ is the norm of the quaternion algebra $(\gamma,\delta)$, hence hyperbolic if and only if $(\gamma,\delta)$ is split. Hence the assertion follows from Prop.~\ref{p.redfr} for $C := k$, which is a \emph{regular} composition $k$-algebra by our hypothesis on $k$. 
\end{proof}

Not only the preceding proof breaks down in characteristic two but also the result itself. This will follow from the next proposition and the subsequent remark.

\begin{prop} \label{p.BATWOM}
If $k$ has characteristic two, then $\Her_3(k)$ is homogeneous if and only if $k$ is perfect. In this case, there are no quaternion division algebras over $k$.    
\end{prop}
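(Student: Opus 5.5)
In characteristic two, $C = k$ is not regular, so Prop.~\ref{p.redfr} does not apply. I would work directly with Cor.~\ref{c.JOMAIS} and Cor.~\ref{c.JOMACL}. By Cor.~\ref{c.JOMACL}, every isotope of $J := \Her_3(k)$ is isomorphic to $\Her_3(k,\Gamma)$ for some diagonal $\Gamma = \diag(\gamma_1,\gamma_2,\gamma_3)$. By Cor.~\ref{c.JOMAIS} (applicable since $\dim C = 1$), $\Her_3(k,\Gamma) \cong J$ if and only if there exist $g \in \GL_3(k)$ and $\lambda \in k^\times$ with $g\,\diag(\gamma_1,\gamma_2,\gamma_3)\,g^\trans = \lambda\Eins_3$. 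In other words, the diagonal symmetric bilinear form $\la\gamma_1,\gamma_2,\gamma_3\ra$ must be isometric to a scalar multiple of $\la 1,1,1\ra$. So homogeneity of $J$ amounts to: every regular diagonal ternary symmetric bilinear form is similar to $\la 1,1,1\ra$.

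\emph{If $k$ is perfect.} Every element of $k$ is a square, so each $\gamma_i = \alpha_i^2$ with $\alpha_i \in k^\times$. Taking $g = \diag(\alpha_1^{-1},\alpha_2^{-1},\alpha_3^{-1})$ gives $g\Gamma g^\trans = \Eins_3$, hence $J$ is homogeneous.

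\emph{If $k$ is not perfect.} Pick $\gamma \in k \setminus k^2$ and consider $\Gamma = \diag(\gamma,1,1)$. I will show that no $g,\lambda$ as above exist, using the diagonal-value invariant of a symmetric bilinear form in characteristic two. For a form $b$, the map $v \mapsto b(v,v)$ is additive and Frobenius-semilinear. For $\la a_1,a_2,a_3\ra$ its image is $k^2a_1 + k^2a_2 + k^2a_3$, a $k^2$-subspace of $k$ that is an isometry invariant; under scaling by $\lambda$ it is multiplied by $\lambda$.
\begin{itemize}
\item For $\lambda\la 1,1,1\ra$ this subspace is $\lambda k^2$, of $k^2$-dimension $1$.
\item For $\la\gamma,1,1\ra$ it is $k^2 + k^2\gamma$, of $k^2$-dimension $2$, because $\gamma \notin k^2$.
\end{itemize}
The dimensions differ, so the two forms are not similar and $J^{(p)} \not\cong J$ for suitable $p$.

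\emph{Final claim.} If $k$ is perfect of characteristic two, quaternion algebras are $[a,b)$ with norm $\dla b\dra\otimes[1,a]$. Since every element is a square, $b = c^2$, and then $\dla b\dra$ is isotropic on the vector $(c,1)$. Hence the norm is isotropic and the algebra splits; alternatively, this is the classical fact that $\mathrm{Br}_2(k) = 0$ for perfect $k$ of characteristic two.

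The main obstacle is making sure Cor.~\ref{c.JOMAIS} is legitimately available for $C = k$ in characteristic two. Here $J$ is the full algebra of symmetric matrices and $\Str(J)$ is described by \cite{MR407103}, so this should hold. One also needs the diagonal-value subspace to be a genuine similarity invariant; this is clear, since $b(gv,gv)$ for the transformed form ranges over the same set.
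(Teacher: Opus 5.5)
Your proof is correct. It matches the paper in the perfect case and argues directly where the paper only cites.

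\textbf{Perfect case.} The paper argues essentially as you do: the diagonal entries of $\Gamma$ may be rescaled by non-zero squares, so every isotope $\Her_3(k,\Gamma)$ is isomorphic to $\Her_3(k)$.

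\textbf{Non-perfect case and final statement.} Here the paper simply invokes \cite[Example~39.35]{MR4806163} to get $\Her_3(k,\diag(1,1,\gamma))\not\cong\Her_3(k)$ for $\gamma\notin k^2$. For the absence of quaternion division algebras it cites \cite[Exc.~19.27]{MR4806163}. You instead use Cor.~\ref{c.JOMAIS} to reduce the isomorphism question to similarity of the diagonal bilinear forms $\la\gamma,1,1\ra$ and $\la1,1,1\ra$. You then separate them by the $k^2$-subspace $\{b(v,v)\}$ of $k$, which has $k^2$-dimension $2$ in the first case and $1$ in the second. The final claim you obtain from the isotropy of $\dla c^2\dra\otimes[1,a]$.

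\textbf{Trade-off.} Your route gives a transparent, self-contained reason why $\Her_3(k)$ fails to be homogeneous. The cost is that it depends on the ``only if'' half of Cor.~\ref{c.JOMAIS} in characteristic $2$, that is, on the description of $\Str(\Her_3(k))$ via \cite{MR407103} for a non-regular algebra. The paper's citation avoids that dependence for this direction. Still, the paper states Cor.~\ref{c.JOMAIS} with no restriction on the characteristic and uses it in exactly this way for $C=k$ in the proof of Thm.~\ref{t.HOMEN}~(c). So you are entitled to rely on it, as you anticipated.
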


\begin{proof}
If $k$ is perfect, then $k^2 = k$. Since the diagonal entries of any diagonal matrix $\Gamma \in \GL_3(k)$ can be changed by arbitrary non-zero square factors without changing the isomorphism class of $\Her_3(k,\Gamma)$, we conclude $\Her_3(k,\Gamma) \cong J := \Her_3(k)$, so $J$ is homogeneous. If $k$ is not perfect, then there exists $\gamma \in k^\times \setminus k^{\times 2}$, and \cite[Example~39.35]{MR4806163}    with $\Gamma := \diag(1,1,\gamma)$ shows that $\Her_3(k,\Gamma)$ is not isomorphic to $J$. Thus $J$ is not homogeneous. The final statement follows from \cite[Exc.~19.27]{MR4806163}.
\end{proof}

\begin{rem}
\emph{The converse of the final statement in Prop.~\ref{p.BATWOM} does not hold: let $F$ be an algebraically closed field of characteristic two and $k := F((\bft))$ the field of formal Laurent series in the variable $\bft$ over $F$. Though $k$ is not perfect, hence $\Her_3(k)$ is not homogeneous by Prop.~\ref{p.BATWOM}, there are no quaternion division algebras over $k$ \cite[Props.~1, 2]{MR364380}.}   
\end{rem}

\noindent \emph{Proof of Thm.~}\ref{t.HOMEN}. (a) Let $A$ be an Albert $k$-algebra, pick any quaternion subalgebra $B \subseteq \Oct(A)$ and apply Prop.~\ref{p.QUAHOM}. 

(b) By Prop.~\ref{p.HOREOR}~(a), all Freudenthal $k$-algebras of dimension $9$, reduced or division, are homogeneous. Next let $B$ be a quaternion algebra and $A$ an Albert algebra over $k$. Pick any quadratic \'etale subalgebra $K \subseteq B$ in the first case, $K \subseteq C := \Oct(A)$ in the second. By hypothesis, $\Her_3(K)$ is homogeneous, so Prop.~\ref{p.ETAHOM} implies that $\Her_3(B)$ is homogeneous as well while $C$ is split. But this implies that $A$ is a first Tits construction, hence homogeneous by Thm.~\ref{t.FITIHO}.

(c) By (b) it suffices to show that $\Her_3(K)$ is homogeneous for all quadratic \'etale $k$-algebras $K$. We have to show $\Her_3(K,\Gamma) \cong J := \Her_3(K)$ for any diagonal matrix $\Gamma = \diag(\gamma_1,\gamma_2,\gamma_3) \in \GL_3(k)$. From the hypothesis we conclude $\Her_3(k,\Gamma) \cong \Her_3(k)$, so Cor.~\ref{c.JOMAIS} implies $gp\bar g^\trans = gpg^\trans \in k\Eins_3$ for some $g \in \GL_3(k)$, $p := \sum \gamma_ie_{ii} \in \Her_3(k) \subseteq J$. Reading this in $K$ rather than $k$ and applying Cor.~\ref{c.JOMAIS} again, the assertion follows. \hfill $\square$

\begin{eg} \label{e.redal} It has already been observed in \cite{MR734841} and now follows again from Prop.~\ref{p.redfr} that a regular proper reduced Freudenthal algebra whose co-ordinate norm is universal is homogeneous. \emph{The converse, however, does not hold, not even in the special case of Abert algebras.}	
	
In order to see this, let $k_n := \bfC((\bft_1,\dots,\bft_n))$ for any positive integer $n$ be the field of iterated formal Laurent-series in the variable $\bft_1,\dots,\bft_n$ with complex coefficients.  By the main results of \cite{MR364380}, there exist a ramified quadratic extension over $k_1$, a ramified quaternion algebra over $k_2$, and a ramified octonion algebra over $k_3$. Hence we find an unramified octonion algebra $C$ over $k_4$ whose norm, therefore, is not universal. On the other hand, Springer's theory \cite{MR70664} of residue forms over local fields implies that anisotropic quadratic forms over $k_n$ for any $n$ have dimension at most $2^n$. In particular, the $5$-Pfister form $f_5(A)$ with $A := \Her_3(C)$ is isotropic, hence hyperbolic, hence zero in $W(k_4)$, forcing $A$ to be homogeneous.
\end{eg}

\section{Strictly homogeneous proper Freudenthal algebras} \label{stricthom}
We define a Jordan algebra $J$ over $k$ to be \emph{strictly homogeneous} if $J\otimes_kL$ is homogeneous for all field extensions $L$ of $k$. In this section we take up the study of such algebras. By Thm.~\ref{t.FITIHO}, first Tits construction Albert algebras are examples of such. In order to deal with the question of which proper Freudenthal algebras are strictly homogeneous, we require an elementary technical result.

\begin{prop}\label{key}
	Let $q:V \to k$ be an anisotropic quadratic form over $k$ and suppose ${\bf s},{\bf t}$	are independent indeterminates. Then the quadratic form $q_K \oplus {\bf s} q_K$ over the field $K := k({\bf s},{\bf t})$ does not represent the element ${\bf t} \in K$.
\end{prop}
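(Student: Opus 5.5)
The plan is a degree argument in ${\bf t}$, followed by a leading-coefficient argument in ${\bf s}$. It works in arbitrary characteristic, because it only uses that $q$ is anisotropic as a quadratic form and never invokes a bilinear form or diagonalization. First I note that an anisotropic quadratic form stays anisotropic under any purely transcendental extension. If $q_{k({\bf x})}(f)=0$ for a nonzero vector $f$ of rational functions, clear denominators so that $f\in V\otimes k[{\bf x}]$ is a nonzero polynomial vector. Take the top-degree coefficient $f_d\ne 0$; the top-degree coefficient of $q(f)$ is $q(f_d)\neq 0$, which is a contradiction. Applying this twice shows that $q$ is anisotropic over $k({\bf s})$ and over $K$.

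Now suppose, for a contradiction, that $q_K(x)+{\bf s}\,q_K(y)={\bf t}$ with $x,y\in V_K$. Put $F:=k({\bf s})$, so $K=F({\bf t})$, and let $\varphi:=q_F\perp {\bf s}q_F$, a form on $V_F\oplus V_F$. Multiply through by a common denominator $h({\bf t})^2$, with $h\in F[{\bf t}]$ nonzero, so that $x,y$ become polynomial vectors $X,Y\in V\otimes F[{\bf t}]$ satisfying $\varphi(X,Y)={\bf t}\,h({\bf t})^2$. The right-hand side has odd degree $2\deg h+1$ in ${\bf t}$. On the left, let $d:=\max(\deg X,\deg Y)$, and let $(X_d,Y_d)\ne(0,0)$ be the vector of ${\bf t}^d$-coefficients. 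The ${\bf t}^{2d}$-coefficient of $\varphi(X,Y)$ is $\varphi(X_d,Y_d)$, and all other terms have lower degree. So if $\varphi$ is anisotropic over $F$, the left-hand side has even degree $2d$, and this contradicts the odd degree on the right.

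It therefore remains to show that $\varphi=q_F\perp{\bf s}q_F$ is anisotropic over $F=k({\bf s})$. Suppose $q(u)+{\bf s}q(v)=0$ with $(u,v)\neq(0,0)$; after scaling we may take $u,v\in V\otimes k[{\bf s}]$. Since $q$ is anisotropic over $F$, both $u$ and $v$ are nonzero. Then $\deg q(u)=2\deg u$ is even, by the leading-coefficient argument above, while $\deg({\bf s}q(v))=2\deg v+1$ is odd. Two polynomials of different degrees cannot cancel, so this is a contradiction.

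The only point that needs care is the leading-coefficient step. We must check that the top coefficient of $q(f)$ is exactly $q(f_d)$. This holds because $q(f)=\sum_{i,j}$ of terms $b_q(f_i,f_j){\bf x}^{i+j}$ for $i<j$, plus $\sum_i q(f_i){\bf x}^{2i}$, and the only contribution in degree $2d$ is $q(f_d)$. With this verified, the argument never divides by $2$, so no characteristic restriction is needed.
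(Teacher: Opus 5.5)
Your proof is correct and is essentially the paper's argument: clear denominators, compare the parity of the ${\bf t}$-degrees on the two sides, and use the parity of ${\bf s}$-degrees to show that the leading ${\bf t}$-coefficient cannot vanish. Packaging that last step as the anisotropy of $q_F\perp{\bf s}q_F$ over $F=k({\bf s})$ absorbs the paper's case split between $m\neq n$ and $m=n$, and you also prove two facts the paper uses without justification: that $q$ stays anisotropic over $k({\bf s})$, and that $\deg q(w)=2\deg w$.
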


\begin{proof} We always write $q$ instead of $q_K$ for simplicity. Arguing indirectly, there are elements  $x,y \in V_K$ such that $q(x) + {\bf s} q(y) = {\bf t}$. Then $x \ne 0$ or $y \ne 0$.	Writing
	\begin{align*}
		x = \frac{u({\bf s},{\bf t})}{g({\bf s},{\bf t})}, \quad y = \frac{v({\bf s},{\bf t})}{g({\bf s},{\bf t})}
	\end{align*}
	with $u({\bf s},{\bf t}),v({\bf s},{\bf t}) \in V[{\bf s},{\bf t}] := V \otimes_k k[{\bf s},{\bf t}]$ not both zero and $0 \ne g({\bf s},{\bf t}) \in k[{\bf s},{\bf t}]$, we may clear denominators to conclude
	\begin{align}
		\label{ENCE} q\big(u({\bf s},{\bf t})\big) + {\bf s} q\big(v({\bf s},{\bf t})\big) = {\bf t} g({\bf s},{\bf t})^2.
	\end{align}
	For some $m,n,p \in \mathbb{N}$, we have the expansions
	\begin{align*}
		u({\bf s},{\bf t}) =\,\,&u_m({\bf s}){\bf t}^m + \cdots, \\
		v({\bf s},{\bf t}) =\,\,&v_n(s){\bf t}^n + \cdots, \\
		g({\bf s},{\bf t}) =\,\,&g_p({\bf s}){\bf t}^p + \cdots,  
	\end{align*}
	where $\cdots$ refers to lower terms in ${\bf t}$, $u_m({\bf s}), v_n({\bf s}) \in V[{\bf s}]$, $0 \ne g_p({\bf s}) \in k[{\bf s}]$ and $u_m({\bf s}) \ne 0$ (resp. $v_n({\bf s}) \ne 0$) if $u({\bf s},{\bf t}) \ne 0$ (resp. $v({\bf s},{\bf t}) \ne0$). We conclude
	\begin{align*}
		q\big(u({\bf s},{\bf t})\big) =\,\,&q\big(u_m({\bf s})\big){\bf t}^{2m} + \cdots, \\
		q\big(v({\bf s},{\bf t})\big) =\,\,& q\big(v_n({\bf s})\big){\bf t}^{2n} + \cdots, \\
		g({\bf s},{\bf t})^2 =\,\,&g_p({\bf s})^2{\bf t}^{2p} + \cdots,
	\end{align*}
	where $q(u_m({\bf s}))$ (resp. $q(v_n({\bf s}))$) is non-zero whenever $u({\bf s},{\bf t})$ (resp. $v({\bf s},{\bf t})$) is since $q_{k({\bf s})}$ continues to be anisotropic. Plugging this into \eqref{ENCE}, we deduce
	\begin{align}
		\label{HIGHTE} q\big(u_m({\bf s})\big){\bf t}^{2m} + \cdots + {\bf s} q\big(v_n({\bf s})\big){\bf t}^{2n} + \cdots = g_p({\bf s})^2{\bf t}^{2p+1} + \cdots.
	\end{align}
	Here the degree in ${\bf t}$ of the right-hand side is odd, so we will arrive at a contradiction once we have shown that the degree in ${\bf t}$ of the left-hand side is even. This is clear if $m \ne n$. On the other hand, if $m = n$, then the coefficient of the highest term in ${\bf t}$ of the left-hand side is 
	\[
	q\big(u_m({\bf s})\big) + {\bf s} q\big(v_n({\bf s})\big),
	\]
	which cannot be zero unless both summands are non-zero. But then it is the sum of two non-zero polynomials in ${\bf s}$, the first summand being of even, the second summand being of odd degree. Hence the sum itself is not zero, and we have shown in all cases that the left-hand side of \eqref{HIGHTE} has even degree in ${\bf t}$. 
\end{proof}

We have, as an immediate consequence of the above proposition, 

\begin{thm}\label{red-str-homog} Let $J$ be a proper reduced Freudenthal algebra  over $k$. Then $J$ is strictly homogeneous if and only if $J$ is split of dimension at least $9$.
\end{thm}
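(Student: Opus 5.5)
For the ``if'' direction we use Cor.~\ref{c.FREHOM}. If $J \cong \Her_3(C)$ with $C$ split of dimension at least $2$, then $J_L \cong \Her_3(C_L)$ is again split of dimension at least $9$ for every field extension $L/k$. Hence $J_L$ is homogeneous for all $L$, so $J$ is strictly homogeneous.

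For the ``only if'' direction we assume $J$ is strictly homogeneous and write $J \cong \Her_3(C,\Gamma)$ with $C$ of dimension $2^r$. We want to show that $C$ is split with $r \ge 1$. Homogeneity is an isotopy-invariant property: if $J$ is homogeneous then all isotopes of $J$ are isomorphic to $J$, hence homogeneous themselves. This passes to base changes. By Cor.~\ref{c.JOMACL}, $\Her_3(C)$ is an isotope of $J$, so we may replace $J$ by $\Her_3(C)$.

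We first dispose of $r = 0$, that is $C = k$. Put $K := k(\bft)$. If $\ch(k) = 2$, then $K$ is imperfect, and Prop.~\ref{p.BATWOM} shows that $\Her_3(K)$ is not homogeneous. If $\ch(k) \ne 2$, we take $K := k(\mathbf{s},\bft)$. Over $K$ the quaternion algebra $(\mathbf{s},\bft)$ is a division algebra: its norm form $\la 1,-\mathbf{s},-\bft,\mathbf{s}\bft\ra$ is anisotropic by the standard valuation argument, or by Prop.~\ref{key} applied twice. Prop.~\ref{p.BASHOM} then shows that $\Her_3(K)$ is not homogeneous. Either way we contradict strict homogeneity.

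Now let $r \ge 1$, so $C$ is regular, and suppose, aiming for a contradiction, that $C$ is not split, so that $q := n_C$ is anisotropic. Let $K := k(\mathbf{s},\bft)$. By Prop.~\ref{p.redfr}~(iii) applied over $K$, homogeneity of $\Her_3(C_K)$ forces $\dla \gamma\dra \otimes n_{C_K} = q_K \perp (-\gamma)q_K$ to be universal for every $\gamma \in K^\times$. We choose $\gamma := -\mathbf{s}$; the form $q_K \perp \mathbf{s} q_K$ would then have to represent $\bft$, and Prop.~\ref{key} says exactly that it does not. This contradiction shows $C$ is split. The main points to check are that Prop.~\ref{key} applies verbatim to the norm form in characteristic $2$ (it is stated for arbitrary anisotropic quadratic forms, so this should be fine), and that in characteristic $2$ the form $\dla\gamma\dra\otimes n_C$ is indeed $n_C \perp \gamma n_C$ up to sign, which holds since $-1 = 1$.
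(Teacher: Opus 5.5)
Your proof is correct and follows the paper's argument. The paper uses Cor.~\ref{c.FREHOM} for the ``if'' direction, Prop.~\ref{p.redfr} together with Prop.~\ref{key} over $k(\mathbf{s},\bft)$ for regular $C$, and Prop.~\ref{p.BATWOM} over an imperfect extension for $C=k$ in characteristic $2$. Your separate treatment of $C=k$ in characteristic $\neq 2$ via Prop.~\ref{p.BASHOM} is unnecessary: there $C$ is regular with anisotropic norm $\la 1\ra$, and the paper simply includes this case in the same Prop.~\ref{key} argument.
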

\begin{proof} The property of $J$ being split of dimension at least $9$ is stable under base field extensions, hence Cor.~\ref{c.FREHOM}
forces $J$ to be strictly homogeneous. Conversely, let this be so. We first assume that $J$ is regular. Let $C$ denote the coordinate algebra of $J$ and $n_C$ its norm. For all field extensions $K/k$ and all $\gamma \in K^\times$, the Pfister quadratic form $n_C \otimes K \perp \gamma(n_C \otimes K)$ is universal over all field extensions $L/K$ (Prop.~\ref{p.redfr}). By Prop.~\ref{key}, therefore, $n_C$ must be isotropic over $k$, hence $C$ must be split of dimension at least $2$. It remains to discuss the case that $J$ is not regular. Then $k$ has characteristic $2$ and, up to isotopy, $J = \Her_3(k)$. We must show that $J$ is not strictly homogeneous. Since there are field extensions of $k$ that are not perfect, this follows from Prop.~\ref{p.BATWOM}.
\end{proof}

\begin{cor} \label{c.STRIHO}
\emph{(a)} A Freudenthal algebra of dimension $9$ over $k$ is strictly homogeneous if and only if it is isomorphic to $A^{(+)}$, for some central simple associative $k$-algebra $A$ of degree $3$. $\vssp$ \\
\emph{(b)} An Albert algebra over $k$ is strictly homogeneous if and only if it is a first Tits construction. 
\end{cor}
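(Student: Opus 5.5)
Both parts rest on the same mechanism. An algebra that remains homogeneous after base change to its reduced model's splitting data is forced, via Thm.~\ref{red-str-homog}, to have split reduced model. We apply this over a generic splitting field and then descend.

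(a) Let $J \cong H(B,\tau)$ be nine-dimensional with $K := \Cent(B)$.

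\emph{If direction.} Suppose $J \cong A^{(+)}$. Then $K \cong k\times k$ and $B \cong A\times A^{\mathrm{op}}$ with the exchange involution. The structure group of $A^{(+)}$ contains $x\mapsto axb$ for $a,b\in A^\times$. Hence it acts transitively on $A^\times$, since $x = x\cdot 1$, so $A^{(+)}$ is homogeneous. This description is stable under every base change $L/k$, because $(A^{(+)})_L = (A_L)^{(+)}$. Therefore $J$ is strictly homogeneous.

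\emph{Only-if direction.} Suppose $J$ is strictly homogeneous. It suffices to show $f_1(J)$ is split, i.e.\ $K \cong k\times k$: then $B \cong A\times A^{\mathrm{op}}$ and $J\cong A^{(+)}$. Choose a field extension $L/k$ making $J_L$ reduced, say the function field of the Severi--Brauer variety of $B$ over $K$ (or any splitting field of $B$ that keeps $K_L$ a field whenever $K$ is a field). Then $J_L \cong (J_{\red})_L = \Her_3(K_L,\Gamma)$ is a proper reduced Freudenthal algebra. It is strictly homogeneous, since strict homogeneity passes to extensions. Thm.~\ref{red-str-homog} then forces $J_L$ to be split, so $K_L$ is split.

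\textbf{Main obstacle.} We need an $L$ that splits $B$ but not $K$. Take $L$ to be the function field of the Weil restriction $R_{K/k}(\mathrm{SB}(B))$ of the Severi--Brauer variety; this restriction is a $k$-variety because $B$ carries a $K/k$-involution. Since $[L:k]$ is a regular function field extension, $k$ is algebraically closed in $L$, so $K_L$ stays a field when $K$ is. Alternatively, one can compare degrees: a splitting field of $B$ of degree $3$ over $K$, descended along the involution, gives a cubic extension of $k$, and $K$ remains a field over odd-degree extensions. Either route gives $K$ split over $k$.

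(b) \emph{If direction.} A first Tits construction remains one after any base change, and it is homogeneous, either by Thm.~\ref{t.FITIHO} for division algebras or because it is split when reduced (\cite[Cor.~4.2]{MR734841} with Cor.~\ref{c.FREHOM}).

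\emph{Only-if direction.} Suppose $A$ is strictly homogeneous. Pass to a field $L/k$ over which $A$ becomes reduced while $\Oct(A)$ keeps its isomorphism type. A suitable odd-degree (cubic) extension works, since $A_{\red}$ splits $g_3$ after a cubic extension and $\Oct(A)_L$ stays division by Springer's theorem for odd-degree extensions. Then $A_L$ is a strictly homogeneous reduced Albert algebra, so it is split by Thm.~\ref{red-str-homog}. Hence $\Oct(A)_L$ is split, so $\Oct(A)$ is split by Springer, and $A$ is a first Tits construction by \ref{ss.INVALB}. The key input is that Albert division algebras become reduced over a cubic extension, namely any cubic subfield; I expect this step to be routine.
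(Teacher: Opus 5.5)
Your proof is correct and follows the paper's argument. The paper also makes the algebra reduced over a cubic extension $L$, taking a separable cubic subfield $L \subseteq J$; this is the ``alternative'' route in your (a) and your route in (b). It then applies Thm.~\ref{red-str-homog} to get $J_L$ split, and descends the splitting of $\Cent(B)$, resp.\ $\Oct(A)$, through the odd-degree extension.

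Your Weil-restriction variant in (a) also works, with one correction. The restriction is a $k$-variety regardless of the involution; what matters is that it is geometrically integral, so $k$ is algebraically closed in its function field. Your treatment of the reduced case in the ``if'' direction of (b), via splitness of reduced first Tits constructions and Cor.~\ref{c.FREHOM}, is more careful than the paper's bare appeal to Thm.~\ref{t.FITIHO}, which covers only division algebras.
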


\begin{proof}
The ``if''-direction is clear in (a), and follows from Thm.~\ref{t.FITIHO} in (b). Conversely, let $J$ be any strictly homogeneous Freudenthal $k$-algebra of dimension $9$ (resp., $27$). If $J$ is reduced, then it is split by Thm.~\ref{red-str-homog}, and (a) (resp., (b)) holds. On the other hand, if $J$ is a division algebra, pick any separable cubic subfield $L \subseteq J$. Then the extended algebra $J_L$ over $L$ is still strictly homogeneous but also reduced, hence split by what we have just seen. If $J$ has dimension $9$, we may write $J = H(B,\tau)$ for some central simple associatve $k$-algebra $(B,\tau)$ with unitary involution. Since $H(B,\tau)$ becomes split after extending scalars from $k$ to $L$, so does the centre of $B$, which therefore, being quadratic \'etale over $k$, must have been split to begin with. Thus $(B,\tau) \cong (A \times A^{\mathrm{op}},\eps)$ for some central simple associative algebra $A$ of degree $3$ over $k$, $\eps$ being the exchange involution. Thus $J \cong A^{(+)}$, as claimed in (a). If $J$ has dimension $27$, put $C := \Oct(A)$. Since $J_L$ is split, so is $C_L$, hence $C$ since the degree of $L$  over $k$ is odd. But then $A$ must be a first Tits construction, as claimed in (b).
\end{proof}

\begin{eg}\label{homog-not-str-homog}
Let $F=\mathbb{Q}(\sqrt{2})$ and $(B_0,\sigma)$ be a degree $3$ central division algebra over $F$ with an involution $\sigma$ of second kind over $F/\mathbb{Q}$, these always exist \cite[Chap.~5,~Prop.~1]{MR340440}. Let $J_0:=H(B,\sigma)$. Then $J_0$ is a Freudenthal division algebra over $\mathbb{Q}$ of dimension $9$. Let $B=B_0\otimes_{\mathbb{Q}}\mathbb{Q}(i)$ and $\tau=\sigma\otimes 1$. Then $(B,\tau)$ is a central division algebra over $K:=\mathbb{Q}(\sqrt{2})(i)$ with an involution $\tau$ of the second kind over $K/k$, where $k=\mathbb{Q}(i)$. Consider $J:={J_0}\otimes_{\mathbb{Q}}k$. Then $J=H(B,\tau)$ is a $9$-dimensional Freudenthal division algebra over $k$. Since $k$ has no real embeddings, any octonion algebra over $k$ must be split \cite[Cor.~23.21]{MR4806163}. Hence the involution $\tau$ on $B$ must be distinguished, as is any involution of second kind on $B$. Hence $J$ is homogeneous Freudenthal division algebra over $k$ which, however, thanks to Cor.~\ref{c.STRIHO}, is not strictly homogeneous. 
\end{eg}

\section{Freudenthal division algebras and valuations} \label{s.FREDIVA} Throughout the next two sections, we fix a field $F$ that is complete under a discrete valuation. We begin by  collecting a number of technicalities that will be necessary for establishing the main results in the penultimate section of the paper. 

\subsection{Complete fields} \label{ss.COMFIE} Let us begin by fixing some notation. The discrete valuation belonging to $F$ will be denoted ba $\lambda$, so $\lambda\:F\to \IZ \cup \{\infty\}$ is a surjective map satisfying the usual properties; for basic facts about discrete valuations, see \cite{MR2215492}. We denote by $\mfo := \{\alpha \in F \mid \lambda(\alpha) \ge 0\}$ the valuation ring of $F$ ($\lambda$ always being understood), which is a local PID, by $\mfp := \{\alpha \in F \mid \lambda(\alpha) > 0\}$ the valuation ideal, i.e., the unique maximal ideal, of $\mfo$, and by $\bar F := \mfo/\mfp$ the residue field of $F$. The natural map from $\mfo$ to $\bar F$ will be indicated by $\alpha \mapsto \bar\alpha$.

\subsection{Cubic Jordan division algebras over complete fields} \label{ss.CUJOCO} Let $J$ be a cubic Jordan division algebra over $F$, always assumed to be finite-dimensional. Note that $J$ has either degree $3$ or dimension $1$. Free use will be made of the valuation theory for Jordan division rings developed in \cite{MR0347922}. In particular, by \cite[Satz~5.1]{MR0347922}, $\lambda$ has a unique extension to a discrete valuation $\lambda_J\:J \to \IQ \cup \{\infty\}$ given by 
\begin{align}
\label{LAMJAY} \lambda_J(x) := \frac{1}{3}\lambda\big(N_J(x)\big) &&(x \in J)
\end{align}
and satisfying the following conditions, for all $x,y \in J$.
\begin{align*}
\lambda_J(x) = \infty \Longleftrightarrow\,\,& x = 0, \\
\lambda_J(x + y)\geq\,\,&\min \{\lambda_J(x),\lambda_J(y)\}, \\
\lambda_J(U_xy) =\,\,&2\lambda_J(x) + \lambda_J(y).
\end{align*}
In particular, 
\[
\mfo_J := \{x \in J \mid \lambda_J(x) \geq 0\}
\]
is an $\mfo$-subalgebra of $J$, called the \emph{valuation algebra} of $\lambda_J$ (or of $J$), containing
\[
\mfp_J := \{x \in J \mid \lambda_J(x) > 0\}
\]
as its unique maximal ideal, called the \emph{valuation ideal} of $\lambda_J$. The invertible elements of $\mfo_J$ may be described as
\[
\mfo_J^\times = \{x \in J\mid \lambda_J(x) = 0\} = \mfo_J \setminus \mfp_J
\]
and, therefore, $\bar J := \mfo_J/\mfp_J$ is a Jordan division algebra over $\bar F$. We call $\bar J$ the \emph{residue algebra} of $J$. The natural map from $\mfo_J$ to $\bar J$ will again be denoted by $x \mapsto \bar x$. We also note that $\lambda_J$ preserves arbitrary powers and 
\begin{align}
\label{LAMSHA} \lambda_J(x^\sharp) = 2\lambda_J(x)
\end{align}
for all $x \in J$ since we may assume $x \ne 0$, so $\lambda_J(x^\sharp) = \lambda_J(N_J(x)x^{-1}) = \lambda(N_J(x)) - \lambda_J(x) = 3\lambda_J(x) - \lambda_J(x)=2\lambda_J(x)$.

\subsection{Examples} \label{ss.CUASDI} Let $A$ be a finite-dimensional cubic associative division algebra over $F$. The preceding considerations apply to $J := A^{(+)}$ and show, in obvious notation, $\lambda_J = \lambda_A$, $\mfo_J= \mfo_A^{(+)}$, $\mfp_J= \mfp_A$, and $\bar J = \bar A^{(+)}$.

\subsection{Ramification} \label{ss.RAMIF} Returning to the cubic Jordan division $F$-algebra $J$ of \ref{ss.CUJOCO}, we deduce from \cite[Korollar of Lemma~3.1]{MR0347922} that $\lambda_J(J^\times)$ is an additive subgroup of $\IQ$ which in turn contains $\IZ$ as a subgroup of finite index \cite[5.3]{MR0347922}. This index is called the \emph{ramification index} of $J$ over $F$, denoted by $e_{J/F}$. By \cite[Kor.~1 of Prop.~6.4]{MR0347922}, the ramification index divides the degree of $J$ and hence is either $1$ or $3$; in particular, it is odd, so we may conclude from \cite[Satz~6.3(c)]{MR0347922} that the residue degree of $J$ over $F$, denoted by $f_{J/F}$ and defined as in \cite[6.3]{MR0347922}, agrees with the dimension of $\bar J$ over $\bar F$. Moreover, by \cite[Satz~6.3(d)]{MR0347922} it satisfies the \emph{fundamental equality}
\begin{align}
\label{FUNDEQ} e_{J/F}f_{J/F} = \dim_F(J).
\end{align}
We clearly have $\mfp\mfo_J \subseteq \mfp_J$, and since $\mfo_J$ is a free module of rank $\dim_F(J)$ over the PID $\mfo$ \cite[Satz~6.3(a)]{MR0347922}, one checks  that
\begin{align}
\label{RAMONE} e_{J/F} = 1 \Longleftrightarrow \lambda_J(J^\times) = \IZ \Longleftrightarrow \mfp_J= \mfp\mfo_J.
\end{align}
$J$ is said to be \emph{unramified} (resp., \emph{ramified}) if $e_{J/F} = 1$ (resp., $e_{J/F} = 3$) and $\bar J$ is separable over $\bar F$.

\subsection{Extending the cubic structure} \label{ss.EXTCUB} Our first aim in this section will be to close a gap in the proof of \cite[Prop.~3]{MR0379620}\footnote{The gap reveals itself in the sloppy definition of a cubic form.} by deriving a canonical cubic structure on $\bar J$ over $\bar F$ from the given one on $J$ over $F$. We proceed in two steps, the first one passing from $J$ over $F$ to $\mfo_J$ over $\mfo$, the second from $\mfo_J$ over $\mfo$ to $\bar J$ over $\bar F$. The following lemma paves the way for the first step.

\begin{lem} \label{l.VANPOL}
Let $R$ be a commutative ring, $M,N,N^\prime$ be $R$-modules, $f\:M \to N$ a polynomial law over $R$ and $i\:N \to N^\prime$ an injective $R$-linear map. If $i \circ f = 0$ as a polynomial law over $R$, the $f = 0$ as a polynomial law over $R$.
\end{lem}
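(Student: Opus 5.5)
Since a polynomial law is determined by its components, it suffices to show that $f_S = 0$ for every commutative associative $R$-algebra $S$, i.e.\ $f_S(x) = 0$ in $N \otimes_R S$ for all $x \in M \otimes_R S$. The hypothesis only gives $(i \otimes \Eins_S)(f_S(x)) = 0$ in $N^\prime \otimes_R S$, and $i \otimes \Eins_S$ need not be injective when $S$ is not flat. So the plan is to reduce to computations over a polynomial ring, which is free and hence flat over $R$.

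Fix $S$ and $x \in M_S$, and write $x = \sum_{l=1}^n m_l \otimes s_l$ with $m_l \in M$, $s_l \in S$. Take the polynomial ring $P := R[\bft_1,\dots,\bft_n]$ and the generic element $x_P := \sum_l m_l \otimes \bft_l \in M_P$. Let $\psi\: P \to S$ be the $R$-algebra homomorphism with $\bft_l \mapsto s_l$. Then $(\Eins_M \otimes \psi)(x_P) = x$, and naturality of polynomial laws gives
\[
f_S(x) = (\Eins_N \otimes \psi)\big(f_P(x_P)\big).
\]
It therefore suffices to show $f_P(x_P) = 0$ in $N_P$.

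Now $P$ is a free $R$-module with basis the monomials, so $N_P \cong \bigoplus N\bft^\alpha$ and $N^\prime_P \cong \bigoplus N^\prime\bft^\alpha$. Under these identifications $i \otimes \Eins_P$ acts coefficientwise by $i$, and so it is injective. Since $(i\otimes\Eins_P)(f_P(x_P)) = (i\circ f)_P(x_P) = 0$, we get $f_P(x_P) = 0$, and the claim follows.

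No step here is a real obstacle. The only point requiring care is the naturality identity, together with the observation that flatness is needed only for the polynomial algebra $P$, not for $S$. This is exactly why we pass through the generic element rather than arguing directly over $S$.
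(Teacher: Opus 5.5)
Your proof is correct and is essentially the paper's argument: pass to a polynomial ring, which is free and hence flat, so that $i \otimes \Eins$ stays injective and $f$ vanishes there. The only difference is that the paper uses countably many indeterminates $R[\bft_i \mid i \in \IN]$ and cites a general criterion (Cor.~12.11 of the cited book) to deduce $f = 0$ from this, whereas you prove that reduction directly via the generic element $x_P$ and naturality along $\psi\colon P \to S$.
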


\begin{proof}
Let $\bfT = (\bft_i)_{i \in \IN}$ be a family of indeterminates. The corresponding polynomial ring $S := R[\bfT]$ is free, hence flat, as an $R$-module, so the $S$-linear extension $i_S\:N_S \to N^\prime_S$ continues to be injective. Hence $i_S \circ f_S = 0$ implies $f_S = 0$ as a set map $M_S \to N_S$, and the lemma follows from \cite[Cor.~12.11]{MR4806163}.	
\end{proof}

\begin{prop} \label{p.JAYOJA}
Let $J$ be a cubic Jordan division algebra over $F$. Writing $i\:\mfo \hookrightarrow F$ for the inclusion, there is a unique structure  of a cubic Jordan $\mfo$-algebra on $\mfo_J$ such that the inclusion $i_J\:\mfo_J \hookrightarrow J$ is an $i$-semi-linear homomorphism of cubic Jordan algebras.
\end{prop}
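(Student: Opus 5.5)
Uniqueness is immediate from Lemma~\ref{l.VANPOL}. Suppose two cubic Jordan $\mfo$-structures on $\mfo_J$ both make $i_J$ a semi-linear homomorphism. Then their $U$-operators, units and norms, composed with the injective maps $\mfo_J \hookrightarrow J$, $\End_\mfo(\mfo_J)\hookrightarrow \End_F(J)$ and $\mfo\hookrightarrow F$, agree with those of $J$. Hence the difference polynomial laws vanish after composition with an injective $\mfo$-linear map, so they vanish identically. (For $\End$ we view $U$ as the polynomial law $\mfo_J \to \mathrm{Hom}_\mfo(\mfo_J\otimes\mfo_J,\mfo_J)$, or equivalently as the law $(x,y)\mapsto U_xy$ into $\mfo_J$.)

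For existence, we use that $\mfo_J$ is a free $\mfo$-module of rank $n=\dim_F(J)$ \cite[Satz~6.3(a)]{MR0347922}. Pick an $\mfo$-basis $e_1,\dots,e_n$ of $\mfo_J$; it is also an $F$-basis of $J$, so $\mfo_J\otimes_\mfo F = J$. Polynomial laws on free modules of finite rank are given by polynomials in the coordinates. It therefore suffices to show that the coefficients of $N_J$, of the quadratic map $x\mapsto U_x$ (or equivalently of the adjoint $\sharp$ together with $T_J$), and the unit $1_J$ are integral with respect to this basis. Then they define polynomial laws over $\mfo$ whose base change to $F$ is the given structure. The Jordan and cubic identities, holding strictly over $F$, then hold strictly over $\mfo$: each identity is a polynomial law over $\mfo$ whose composition with the injection into its $F$-version vanishes, so Lemma~\ref{l.VANPOL} applies again.

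Integrality proceeds in stages. First, $1_J\in\mfo_J$ since $N_J(1)=1$. Second, $\mfo_J$ is closed under $U$ by $\lambda_J(U_xy)=2\lambda_J(x)+\lambda_J(y)$, and under $\sharp$ by \eqref{LAMSHA}. For $x\in\mfo_J$ we have $N_J(x)\in\mfo$ by \eqref{LAMJAY}. The traces require an argument: $T_J(x)$ and $S_J(x)$ are coefficients of the minimal/generic polynomial of $x$. For $x\in\mfo_J$, the subalgebra $F[x]$ is a field (or $F$) with the restricted valuation, and $x$ is integral over $\mfo$. Hence the coefficients of its characteristic polynomial $m_{J,x}$, which is a power of its minimal polynomial, lie in $\mfo$, using that $\mfo$ is integrally closed and that the roots in a splitting field have nonnegative valuation. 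So $T_J,S_J$ map $\mfo_J$ into $\mfo$.

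The main obstacle is passing from ``integral-valued on $\mfo_J$'' to ``integral coefficients'' as polynomial laws. A polynomial over $F$ taking $\mfo$-values on $\mfo^n$ need not have $\mfo$-coefficients when $\bar F$ is finite. I would avoid this by using strict integrality: the valuation argument works over any unramified extension $F'$ of $F$, for example the completion of $F(\bft_1,\dots,\bft_n)$ with respect to the Gauss valuation extending $\lambda$. Here $J_{F'}$ remains a division algebra (the residue algebra becomes $\bar J(\bft)$, still a division algebra), and $(\mfo_J)_{\mfo'}\subseteq\mfo_{J_{F'}}$. Evaluating $N_J$, $T_J$, $S_J$, $\sharp$ and $U$ at the generic element $\sum\bft_ie_i$ then shows that the polynomials lie in $\mfo'$, whose intersection with $F[\bft]$ is $\mfo[\bft]$. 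If justifying that $J_{F'}$ stays division proves delicate, the fallback is to use the bilinear structure: integrality of $T_J(x,y)$ on $\mfo_J\times\mfo_J$ gives integrality of the bilinear coefficients directly, and the trilinearization of $N_J$ together with $N_J(x)$ handles the cubic terms up to factors of $2,3$ only on diagonal terms, where the values $N_J(e_i)$ are already integral.
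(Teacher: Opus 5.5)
You get the uniqueness argument and the overall frame of the existence proof right, and they agree with the paper: a free $\mfo$-basis, integral coefficients, and identities transported by Lemma~\ref{l.VANPOL}. You also correctly isolate the one real difficulty. For the unit, the adjoint, $U$ and the traces, every coefficient is a value ($e_i^\sharp$, $e_i\times e_j$, $U_{e_i}e_l$, $U_{e_i,e_j}e_l$, $T_J(e_i)$, $S_J(e_i,e_j)$, \dots) and is therefore integral. The coefficient $c_{ij}$ of $r_i^2r_j$ in $N_J(\sum r_ie_i)$ is not a value, and integer-valuedness alone does not force integrality when $\bar F=\mathbb{F}_2$ (think of $\frac12 r_1r_2(r_1-r_2)$ over $\mathbb{Q}_2$). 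Neither of your two resolutions closes this gap.

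Your main route needs $J_{F'}$ to remain a division algebra over the completed Gauss extension $F'$. This is an infinite extension, so no degree argument is available. Invoking the residue algebra $\bar J(\bft)$ is circular, since the residue algebra of $J_{F'}$ only exists once $J_{F'}$ is known to be a division algebra. What you would actually need is a Gauss-lemma statement $\lambda'\bigl(N_J(\sum_\alpha x_\alpha\bft^\alpha)\bigr)=3\min_\alpha\lambda_J(x_\alpha)$, which is genuine valuation theory you have not supplied.

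Your fallback is wrong as stated. The values of $N_J$ and of its trilinearization give you $N_J(e_i)$ and $N_J(e_i,e_j,e_l)$ for distinct indices, but only $N_J(e_i,e_i,e_j)=2c_{ij}$; values at $e_i\pm e_j$ only give $c_{ji}\pm c_{ij}$. So the factor $2$ sits on the mixed terms $r_i^2r_j$, not on the $r_i^3$ terms. Residue characteristic $2$ is exactly where the obstacle bites.

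The missing idea is the cubic norm identity
\[
N_J(x+y)=N_J(x)+T_J(x^\sharp,y)+T_J(x,y^\sharp)+N_J(y).
\]
It shows that the coefficients of $N_J$ in the basis $(e_i)$ are $N_J(e_i)$, $c_{ij}=T_J(e_i^\sharp,e_j)$ and $T_J(e_i\times e_j,e_l)$. All of these are integral because $\mfo_J^\sharp\subseteq\mfo_J$ by \eqref{LAMSHA} and $T_J(\mfo_J,\mfo_J)\subseteq\mfo$. The latter follows from your integrality of $T_J$ and $S_J$ on $\mfo_J$, by linearizing $S_J$ and using $T_J(x,y)=T_J(x)T_J(y)-S_J(x,y)$. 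This is exactly how the paper defines $N_{\mfo_J}$. It then restricts the adjoint to $\mfo_J$ and inherits the cubic norm structure identities from $J$, with no field extension needed.

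If you prefer to keep your extension idea, an unramified \emph{quadratic} extension $K/F$ suffices. Then $J_K$ remains a division algebra and $\lambda_{J_K}$ restricts to $\lambda_J$. Moreover $|\bar K|\ge 3$ rules out nonzero cubic forms over $\bar K$ vanishing identically. Hence an $\mfo_K$-valued cubic form on $\mfo_K^n$ has coefficients in $\mfo_K\cap F=\mfo$.
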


\begin{proof}
By \cite[34.10(b)]{MR4806163}, the assertion amounts to the following: there exists a unique cubic form $N_{\mfo_J}\:\mfo_J \to \mfo$ making $\mfo_J$ a cubic Jordan $\mfo$-algebra and rendering
\begin{align}
\vcenter{\label{ISEMILI}\xymatrix{\mfo_J \ar@{^{(}->}[r] _{i_J} \ar[d]_{N_{\mfo_J}} & J \ar[d]^{N_J} \\
\mfo \ar@{^{(}->}[r]_i & F}} 
\end{align}
a commutative $i$-semi-linear polynomial square in the sense that $_{\mfo}i \circ N_{\mfo_J} = \,_{\mfo}i_J \circ\,_{\mfo}N_J$ as polynomial laws over $\mfo$; here the left-hand index ``$\mfo$''  refers to the restriction of scalars  for polynomial laws \cite[12.27]{MR4806163}. Hence uniqueness follows from Lemma~\ref{l.VANPOL}.

In order to prove existence, we begin by showing
\begin{align}
\label{SHATESO} \mfo_J^\sharp \subseteq \mfo_J, \quad N_J(\mfo_J) + T_J(\mfo_J) + T_J(\mfo_J,\mfo_J) + S_J(\mfo_J) \subseteq \mfo.
\end{align}
While the first relation follows from \eqref{LAMSHA}, we have $N_J(\mfo_J) \subseteq \mfo$ by \eqref{LAMJAY}, and the proof of \cite[Satz~5.1]{MR0347922} yields $T_J(\mfo_J) + S_J(\mfo_J) \subseteq \mfo$, hence $S_J(\mfo_J,\mfo_J) \subseteq \mfo$ after linearization. Now \cite[(33a.13)]{MR4806163} also implies $T_J(\mfo_J,\mfo_J) \subseteq \mfo$ and completes the proof of \eqref{SHATESO}.

We have noted before that $\mfo_J$ is a free $\mfo$-module of rank $n := \dim_F(J)$. Let $(e_1,\dots,e_n)$ be an $\mfo$-basis of $\mfo_J$, hence an $F$-basis of $J$. For $R \in \oalg$, define a set map 
\[
(N_{\mfo_J})_R\:(\mfo_J)_R \longrightarrow R 
\]
by
\begin{align*}
(N_{\mfo_J})_R(x) := \sum_{i=1}^n N_J(e_i)r_i^3 + \sum_{i\ne j}T_J(e_i^\sharp,e_j)r_i^2r_j + \sum_{i<j<l}T_J(e_i \times e_j,e_l)r_ir_jr_l
\end{align*}
for all $r_1,\dots,r_n \in R$ and $x := \sum_{i=1}^n e_i \otimes_{\mfo} r_i \in (\mfo_J)_R$. By \eqref{SHATESO}, the right-hand side of the displayed equation belongs to $R$, and one checks that the set maps $(N_{\mfo_J})_R$ vary functorially with $R \in \oalg$, hence define a cubic form $N_{\mfo_J}\:\mfo_J \to \mfo$. There is a natural identification of $\mfo_J \otimes_{\mfo} F$ with $J$ as Jordan $F$-algebras matching $x \otimes_{\mfo} \alpha$ for $x \in \mfo_J$ and $\alpha \in F$ with $\alpha x \in J$. One checks that, under this identification, $N_{\mfo_J} \otimes_{\mfo} F = N_J$, and \eqref{ISEMILI} follows from \cite[(12.29.2)]{MR4806163}. By \eqref{SHATESO}, restricting the adjoint of $J$ to $\mfo_J$ gives a quadratic map $\mfo_J \to \mfo_J$, $x \mapsto x^\sharp$, called the adjoint of $\mfo_J$, and it suffices to show that $\mfo_J$ together with the base point $1_{\mfo_J}$, its adjoint, and the norm $N_{\mfo_J}$ is a cubic norm structure over $\mfo$. It is certainly a cubic array by \cite[33.1]{MR4806163} since $\mfo_J$ is a free $\mfo$-module. Moreover, the identities of \cite[Exc.~33.14]{MR4806163}, being valid in all of $J$, in particular hold in $\mfo_J$, and the assertion follows.
\end{proof}

\begin{cor} \label{c.NOSTRO}
We have $(\mfo_J)_F = J$ as cubic Jordan $F$-algebras, and adjoint, (bi-)linear trace, quadratic trace of the cubic Jordan $\mfo$-algebra $\mfo_J$ are obtained by restricting the corresponding objects for $J$ to $\mfo_J$.	
\end{cor}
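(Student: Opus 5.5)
The plan is to read off both assertions from the construction in the proof of Prop.~\ref{p.JAYOJA}, together with the general fact that a cubic Jordan algebra is determined by its underlying cubic norm structure (unit, adjoint, norm), from which traces and the $U$-operator are derived.

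\emph{Step 1: the base change.} The $i$-semi-linear homomorphism $i_J\:\mfo_J \hookrightarrow J$ of Prop.~\ref{p.JAYOJA} induces, by the universal property of base change, an $F$-linear homomorphism of cubic Jordan $F$-algebras $(\mfo_J)_F = \mfo_J \otimes_{\mfo} F \to J$, $x \otimes \alpha \mapsto \alpha x$. It is bijective: $\mfo_J$ is free of rank $n = \dim_F(J)$ over $\mfo$, and an $\mfo$-basis $(e_1,\dots,e_n)$ of $\mfo_J$ is an $F$-basis of $J$ (every $x \in J$ has $\pi^m x \in \mfo_J$ for a uniformizer $\pi$ and $m \gg 0$, since $\lambda_J(\pi^mx) = m + \lambda_J(x)$). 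Because it is a homomorphism of cubic Jordan algebras, it matches units, adjoints and norms, hence is an isomorphism of cubic Jordan $F$-algebras. This is exactly the identification $N_{\mfo_J} \otimes_{\mfo} F = N_J$ already noted in the proof of the proposition.

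\emph{Step 2: the derived objects.} By construction the adjoint of $\mfo_J$ is the restriction of $x \mapsto x^\sharp$ on $J$. For the traces, I would use that in any cubic norm structure the linear trace, the quadratic trace and the bilinear trace are defined by polynomial identities in the norm, unit and adjoint: $T(x) = N(1,x)$-type derivatives, that is, coefficients of $N(\bft 1 - x)$, and $T(x,y)$ via $T(x)T(y) - S(x,y)$. Under the isomorphism $(\mfo_J)_F \cong J$ of Step 1, the norms correspond, so the characteristic polynomials $N_{\mfo_J}(\bft 1 - x)$ and $N_J(\bft 1 - x)$ coincide for $x \in \mfo_J$, with coefficients in $\mfo$ by \eqref{SHATESO}. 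Comparing coefficients gives $T_{\mfo_J} = T_J|_{\mfo_J}$ and $S_{\mfo_J} = S_J|_{\mfo_J}$; linearizing gives the bilinear quadratic trace and hence the bilinear trace. Since $\mfo \to F$ is injective, Lemma~\ref{l.VANPOL} guarantees that these equalities hold as polynomial laws over $\mfo$, not merely pointwise.

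I expect no real obstacle. The only point that needs care is that equalities of polynomial laws must be checked after arbitrary base change over $\mfo$, not just on $\mfo_J$ itself. Lemma~\ref{l.VANPOL}, applied to the injection $\mfo \hookrightarrow F$ (or $\mfo_J \hookrightarrow J$ for the adjoint), handles this.
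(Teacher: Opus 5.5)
Your proposal is correct and follows essentially the paper's own argument: the paper says the corollary follows either by consulting the proof of Prop.~\ref{p.JAYOJA} or from the general results on base change and semi-linear homomorphisms of cubic Jordan algebras, and you spell out the first route (the identification $(\mfo_J)_F = J$ with $N_{\mfo_J}\otimes_{\mfo}F = N_J$, the adjoint of $\mfo_J$ defined as the restriction of that of $J$, and the traces read off from norm and unit). One minor remark: the traces and the adjoint are linear, quadratic or bilinear, so pointwise equality on $\mfo_J$ already gives equality as polynomial laws, and your appeal to Lemma~\ref{l.VANPOL} there is harmless but not needed.
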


\begin{proof}
This follows either by consulting the preceding proof or by combining the results of \cite[12.29 and 34.10]{MR4806163}.	
\end{proof}

For our second step, we require a lemma that is surely well known but seems to lack a convenient reference.

\begin{lem} \label{l.FISETR}
Let $E/F$ be a finite separable field extension. Then $T_{E/F}(\mfp_E) \subseteq \mfp$.	
\end{lem}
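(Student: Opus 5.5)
The plan is to work in a finite Galois closure and use that the unique extension of $\lambda$ is invariant under the Galois group. Let $E/F$ be finite separable and fix a finite Galois extension $M/F$ containing $E$. Since $F$ is complete, the discrete valuation $\lambda$ extends uniquely to $M$ (and to $E$). Call this extension $\lambda_M$; it takes values in $\frac{1}{e}\IZ$ for $e$ the ramification index. Uniqueness forces $\lambda_M\circ\sigma = \lambda_M$ for every $\sigma \in \Gal(M/F)$. Hence each $F$-embedding $\tau\colon E \to M$ maps $\mfp_E$ into $\mfp_M := \{\beta \in M \mid \lambda_M(\beta) > 0\}$. Indeed, $\lambda_M\circ\tau$ is a valuation on $E$ extending $\lambda$, so by uniqueness it equals the valuation of $E$, which is the restriction of $\lambda_M$.

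Next I would write the trace as a sum over embeddings, using separability: for $\alpha \in \mfp_E$,
\[
T_{E/F}(\alpha) = \sum_{\tau} \tau(\alpha),
\]
where $\tau$ runs over the $[E:F]$ distinct $F$-embeddings $E \to M$. Each summand lies in $\mfp_M$, and $\mfp_M$ is closed under addition by the ultrametric inequality. So $T_{E/F}(\alpha) \in \mfp_M \cap F$. An element of $F$ with positive $\lambda_M$-value has positive $\lambda$-value, because $\lambda_M$ restricts to $\lambda$ on $F$. Thus $T_{E/F}(\alpha) \in \mfp$.

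The only nontrivial input is the standard fact that a complete discretely valued field has a unique extension of its valuation to any finite extension. I will cite it from \cite{MR2215492}; alternatively it is the special case of \cite[Satz~5.1]{MR0347922} with $J = E^{(+)}$, applied to the cubic case, though for general degree the classical reference is the appropriate one. I expect this citation step to be the main obstacle, since everything else is formal.

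If one prefers to avoid Galois closures, a second route is available. The element $\alpha$ is integral over $\mfo$, and so are its powers. The characteristic polynomial of multiplication by $\alpha$ is a power of its minimal polynomial. Its coefficients lie in $\mfo$, and its constant term has $N_{E/F}(\alpha) \in \mfp$. By Hensel's lemma the minimal polynomial is irreducible over the complete field $F$, and Newton polygon considerations then make all its non-leading coefficients lie in $\mfp$, since all roots have positive value. In particular the trace coefficient lies in $\mfp$. I would present the first argument and keep this one only as a remark.
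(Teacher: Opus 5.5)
Your proof is correct and is essentially the paper's: it also passes to a Galois closure $E'$, extends each $F$-embedding $\sigma_i\colon E\to E'$ to an element of $\Gal(E'/F)$, uses $\lambda_{E'}\circ\sigma_i'=\lambda_{E'}$ to put every conjugate of $u\in\mfp_E$ in $\mfp_{E'}$, and concludes $T_{E/F}(u)\in\mfp_{E'}\cap F=\mfp$. One small correction to your optional remark: the minimal polynomial is irreducible by definition, and Hensel's lemma is what forces its Newton polygon to be a single segment; this does not affect your main argument.
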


\begin{proof}
Write $\sigma_i\:E \to E^\prime$, $1\le i\le n := [E:F]$, for the $n$ distinct $F$-embeddings of $E$ into its Galois closure $E^\prime/F$. For each $i$, the $F$-embedding $\sigma_i$ extends to some element $\sigma_i^\prime \in \Gal(E^\prime/F)$, and we have $\lambda_{E^\prime} \circ \sigma_i^\prime = \lambda_{E^\prime}$. For $u \in \mfp_E$ we therefore conclude $\lambda_{E^\prime}(\sigma_i(u)) = \lambda_{E^\prime}(\sigma_i^\prime(u)) = \lambda_{E^\prime}(u) = \lambda_E(u) > 0$, hence $\sigma_i(u) \in \mfp_{E^\prime}$. Thus $T_{E/F}(u) = \sum_{i=1}^n \sigma_i(u) \in \mfp_{E^\prime} \cap F = \mfp$.
\end{proof}	

\begin{prop} \label{p.FRECUI}
Let $J$ be a cubic Jordan division algebra over $F$. Then $(\mfp,\mfp_J)$ is a cubic ideal of $\mfo_J$ in the sense of \cite[Exc.~34.21]{MR4806163}. In particular, writing $\varrho\:\mfo \to \bar F$, $\varrho_J\:\mfo_J \to \bar J$ for the canonical projections, there is a unique cubic Jordan algebra structure on $\bar J$ making $\varrho_J$ a $\varrho$-semi-linear homomorphism of cubic Jordan algebras in the sense of \cite[34.10(b)]{MR4806163}.
\end{prop}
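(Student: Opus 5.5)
The plan is to verify the defining conditions of a cubic ideal from \cite[Exc.~34.21]{MR4806163} one at a time, using only the valuation $\lambda_J$ of \eqref{LAMJAY}. I take these conditions to say, roughly, that the pair $(\mfp,\mfp_J)$ satisfies the following: $\mfp\mfo_J\subseteq\mfp_J$; $\mfp_J$ is stable under the adjoint and under $\mfo_J\times\mfp_J$; and the norm, the linear and quadratic traces, and the bilinear trace send the relevant arguments into $\mfp$. Concretely, for $x\in\mfo_J$ and $y\in\mfp_J$ we need $N_J(y)$, $T_J(y)$, $S_J(y)$, $T_J(x,y)$, $T_J(x^\sharp,y)$, $T_J(x,y^\sharp)\in\mfp$, so that $N(x+y)\equiv N(x)\bmod\mfp$ strictly. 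Once these hold, the second assertion follows formally from \cite[Exc.~34.21]{MR4806163}, applied to the cubic Jordan $\mfo$-algebra $\mfo_J$ of Prop.~\ref{p.JAYOJA} and Cor.~\ref{c.NOSTRO}, and uniqueness follows as in Prop.~\ref{p.JAYOJA}.

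The easy conditions come first. The inclusion $\mfp\mfo_J\subseteq\mfp_J$ is clear because $\lambda_J(\alpha x)=\lambda(\alpha)+\lambda_J(x)$. For $y\in\mfp_J$ we get $\lambda(N_J(y))=3\lambda_J(y)>0$ from \eqref{LAMJAY}, and $y^\sharp\in\mfp_J$ from \eqref{LAMSHA}. For the linear trace of $y\in\mfp_J\setminus F$, the subalgebra $E:=F[y]$ is a cubic subfield by \cite[Prop.~46.6]{MR4806163}, and $T_J|_E=T_{E/F}$. If $E/F$ is separable, Lemma~\ref{l.FISETR} gives $T_J(y)\in\mfp$. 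If $E/F$ is purely inseparable (so $\ch(F)=3$), then $T_J(y)=0$. If $y\in F$, then $T_J(y)=3y\in\mfp$. Applying this to $y^\sharp$ gives $S_J(y)=T_J(y^\sharp)\in\mfp$.

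The mixed terms are the remaining work. Here I would first invoke Petersson's valuation theory \cite{MR0347922}, which makes $\mfp_J$ a Jordan ideal of $\mfo_J$: $U_{\mfo_J}\mfp_J+U_{\mfp_J}\mfo_J+\{\mfo_J\mfo_J\mfp_J\}\subseteq\mfp_J$. For $x\in\mfo_J^\times$ and $y\in\mfp_J$, the identity \eqref{UOPCUB} applied with $x^{-1}$, together with $U_x$ being a bijection of $\mfo_J$ preserving $\mfp_J$, should give $T_J(x,y)\in\mfp$ by taking $T_J$ of $U_{x^{-1}}y$ and of $x\times y$. Since $\mfo_J=\mfo_J^\times\cup\mfp_J$, and for $x\in\mfp_J$ one can replace $x$ by $1+x\in\mfo_J^\times$, the general case reduces to the unit case together with $T_J(y)\in\mfp$.

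I would then obtain $x\times y\in\mfp_J$ from the formula
\[
x\times y=U_{x,y}1-T_J(x)y-T_J(y)x+\big(T_J(x)T_J(y)-T_J(x,y)\big)1 .
\]
From this, $T_J(x^\sharp,y)$ and $T_J(x,y^\sharp)=T_J(x\times y,\,\cdot\,)$-type expressions lie in $\mfp$, by applying the bilinear-trace statement to elements of $\mfp_J$. The main obstacle is the bilinear trace $T_J(x,y)$ in characteristic $2$, where $T_J(U_{x,y}1)$ only yields $2T_J(x,y)$. If the unit-case argument via $U_x$ fails, I would fall back on expressing $T_J(x,y)$ through $T_J(x+y)$, $S_J(x+y)$ and the already controlled quantities, namely $S_J(x,y)=T_J(x)T_J(y)-T_J(x,y)$ with $S_J(x+y)-S_J(x)-S_J(y)$. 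This would use $x+y\in\mfo_J$ and the valuation of $(x+y)^\sharp-x^\sharp$.
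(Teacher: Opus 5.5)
\paragraph{The gap.} Your linear-trace argument matches the paper's: $F[y]$ a cubic subfield, Lemma~\ref{l.FISETR}, and the scalar and purely inseparable cases. So do your treatment of $N_J$, $S_J$ and the adjoint on $\mfp_J$, and your reduction of $x\in\mfo_J$ to the unit $1_J+x$. The gap is the step everything else rests on: showing $T_J(x,y)\in\mfp$ for $x\in\mfo_J^\times$ and $y\in\mfp_J$.

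The Jordan-ideal property of $\mfp_J$ only gives you $T_J(z^2,y)=T_J(U_zy)\in\mfp$. Linearizing, you get $T_J(z\circ w,y)\in\mfp$, which controls $T_J(\,\cdot\,,y)$ on the $\mfo$-span of squares. With $w=1_J$ this reads $2T_J(z,y)\in\mfp$. That settles the case $2\in\mfo^\times$, but says nothing when $2\in\mfp$. Note this is residue characteristic $2$, which includes dyadic fields of characteristic $0$, not only $\ch(F)=2$.

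Taking traces in \eqref{UOPCUB} for $x^{-1}$ gives
\[
T_J(x,y)=N_J(x)T_J(x^{-2},y)-T_J(x^{-1})T_J(x^\sharp,y)+T_J(x)T_J(y)\equiv -T_J(x^{-1})T_J(x^\sharp,y)\bmod\mfp .
\]
This only trades one unknown bilinear trace against a unit for another, so your ``should give'' is not backed by an argument.

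Your fallback is circular. We have $S_J(x,y)=T_J(x\times y)$, and your own formula for $x\times y$ shows $x\times y\equiv -T_J(x,y)1_J \bmod \mfp_J$. Hence $x\times y\in\mfp_J$ holds if and only if $T_J(x,y)\in\mfp$. Since $(x+y)^\sharp-x^\sharp=x\times y+y^\sharp$, ``the valuation of $(x+y)^\sharp-x^\sharp$'' is exactly the statement to be proved.

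\paragraph{The missing idea: pass to an isotope.} This is what the paper does. For $x\in\mfo_J^\times$, the isotope $J^{(x)}$ is again a cubic Jordan division algebra with norm $N_J(x)N_J$. Since $N_J(x)\in\mfo^\times$, we get $\lambda_{J^{(x)}}=\lambda_J$, so $J^{(x)}$ has the same $\mfo_J$ and $\mfp_J$. Its linear trace is $T_{J^{(x)}}(y)=T_J(x,y)$: expand $N_J(x)N_J(\bft x^{-1}-y)$, or use \cite[(33.11.5)]{MR4806163}.

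Now apply your linear-trace argument to $J^{(x)}$. The relevant subfield is the one generated by $y$ inside $J^{(x)}$, spanned by $x^{-1},y,U_yx$, not $F[y]\subseteq J$. This gives $T_J(x,y)\in\mfp$ in every residue characteristic. The remaining trace conditions then follow as you say, from $\mfo_J^\sharp\subseteq\mfo_J$ and $\mfp_J^\sharp\subseteq\mfp_J$.

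Finally, the conditions of \cite[Exc.~34.21]{MR4806163}, as the paper uses them, also require an $\mfo$-linear map $\vartheta\colon\mfo_J\to\bar F$ with $\vartheta(1_J)=1$ and $\vartheta(\mfp_J)=0$. This is trivial, since $\bar 1_J\neq 0$ in the $\bar F$-vector space $\bar J$, but you should list it.
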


\begin{proof}
According to \cite[Exc.~34.21]{MR4806163}, it suffices to prove the following:
\begin{itemize}
\item [(i)] $\mfp\mfo_J \subseteq \mfo_J$.

\item [(ii)] $T_J(x,y), T_J(x^\sharp,y), N_J(x) \in \mfp$ for all $x \in \mfp_J$, $y \in \mfo_J$.

\item [(iii)] There exists an $\mfo$-linear map $\vartheta\:\mfo_J \to \bar F$ satisfying $\vartheta(1_J) = 1_{\bar F}$ and $\vartheta(\mfp_J) = \{0\}$.
\end{itemize}
(i) is obvious. In the first relation of (ii), applying Cor.~\ref{c.NOSTRO}, we may assume $y \in \mfo_J^\times$. Passing to the $y$-isotope of $\mfo_J$ and invoking \cite[(33.11.5)]{MR4806163}, we may even assume $y = 1_{\mfo_J}$, $x \notin F1_J$ and $T_J(x) \ne 0$. Then $E := F[x] \subseteq J$ is a separable cubic subfield \cite[Prop.~46.6]{MR4806163}, and Lemma~\ref{l.FISETR} implies $T_J(x,1_{\mfo_J})= T_{E/F}(x) \in \mfp$, yielding the first relation of (ii). The second and third now follow immediately from  the relations of \ref{ss.CUJOCO}. In (iii), since $1_{\bar J} \in \bar J$ is trivially unimodular, there exists a linear form $\vartheta^\prime\:\bar J \to \bar F$ satisfying $\vartheta^\prime(1_{\bar J}) = 1_{\bar F}$, and $\vartheta := \vartheta^\prime \circ \varrho_J$ does the job.
\end{proof}

Here is a first application of the preceding result.

\begin{prop} \label{p.NILOJA}
Let $J$ be a cubic Jordan division algebra over $F$. With the natural identification $(\mfo_J)_{\bar F} = \mfo_J/\mfp\mfo_J$, we have	
\[
\Nil\big((\mfo_J)_{\bar F}\big) =\mfp_J/\mfp\mfo_J = \{x \in (\mfo_J)_{\bar F}) \mid x^3 = 0\} = \{x \in (\mfo_J)_{\bar F} \mid \text{$x$ is nilpotent}\}. 
\]
\end{prop}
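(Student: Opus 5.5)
We need to prove $\Nil((\mfo_J)_{\bar F}) = \mfp_J/\mfp\mfo_J = \{x \mid x^3=0\} = \{x \mid x \text{ nilpotent}\}$. Write $\bar J_0 := (\mfo_J)_{\bar F}$. This is a cubic Jordan $\bar F$-algebra by Prop.~\ref{p.JAYOJA}, base changed along $\varrho$, and $\mfp_J/\mfp\mfo_J$ is an ideal in it. The plan is to prove the chain of inclusions
\[
\mfp_J/\mfp\mfo_J \subseteq \{x \mid x^3 = 0\} \subseteq \{x \mid x \text{ nilpotent}\} \subseteq \Nil(\bar J_0) \subseteq \mfp_J/\mfp\mfo_J .
\]

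\textbf{First inclusion.} Take $x \in \mfp_J$. By the cubic ideal property of Prop.~\ref{p.FRECUI} (ii), $T_J(x) = T_J(x,1_J) \in \mfp$ and $N_J(x) \in \mfp$. Moreover $S_J(x) = T_J(x^\sharp) = T_J(x^\sharp,1_J) \in \mfp$ by the same item. Hence the generic minimum polynomial gives
\[
x^3 = T_J(x)x^2 - S_J(x)x + N_J(x)1_J \in \mfp\mfo_J ,
\]
since $x,x^2 \in \mfo_J$. So the image of $x$ cubes to zero in $\bar J_0$. The middle inclusion is trivial.

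\textbf{Nilpotent elements lie in the nil radical.} The image of $\mfp_J$ is an ideal, and I would show that $\bar J_0/(\mfp_J/\mfp\mfo_J) \cong \bar J$ is a division algebra, hence has zero nil radical. That gives $\Nil(\bar J_0) \subseteq \mfp_J/\mfp\mfo_J$, the last inclusion. For the remaining inclusion it then suffices that the ideal $\mfp_J/\mfp\mfo_J$ is nil (first inclusion) and that every nilpotent element lies in it. If $x \in \mfo_J$ has nilpotent image, then $\bar x \in \bar J$ is nilpotent, hence zero since $\bar J$ is a division algebra. So $x \in \mfp_J$. Thus the set of nilpotents equals $\mfp_J/\mfp\mfo_J$, which is a nil ideal and so is contained in $\Nil$; with the last inclusion all four sets coincide.

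\textbf{Main obstacle.} The delicate point is the identification and characterisation of $\Nil$ as a nil ideal: it is the largest nil ideal, so it contains every nil ideal and maps to zero in any quotient without nil ideals. I would invoke the standard facts from the book on the nil radical of finite-dimensional Jordan algebras. One also needs to check that $\varrho_J$ induces $\bar J_0/(\mfp_J/\mfp\mfo_J) \cong \bar J$ as Jordan algebras, which is immediate from the definitions, and that powers in $\bar J_0$ are images of powers in $\mfo_J$, which holds because $\mfo_J \to \bar J_0$ is a homomorphism.
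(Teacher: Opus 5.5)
Your proof is correct, and its skeleton matches the paper's. Both use the same cycle of four inclusions. You obtain $\Nil\big((\mfo_J)_{\bar F}\big) \subseteq \mfp_J/\mfp\mfo_J$ because the quotient is the division algebra $\bar J$, whereas the paper says that $\mfp_J/\mfp\mfo_J$ is the unique maximal ideal. Both force nilpotent images into $\mfp_J$ because elements of $\mfo_J \setminus \mfp_J$ are units. The one step where you genuinely differ is $\mfp_J/\mfp\mfo_J \subseteq \{x \mid x^3 = 0\}$. The paper argues with the valuation alone: for $0 \ne u \in \mfp_J$ it writes $u^3 = N_J(u)\bigl(N_J(u)^{-1}u^3\bigr)$, where $N_J(u) \in \mfp$ and $\lambda_J\bigl(N_J(u)^{-1}u^3\bigr) = 3\lambda_J(u) - \lambda\bigl(N_J(u)\bigr) = 0$, so no traces enter at all. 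You instead use the cubic-ideal property of Prop.~\ref{p.FRECUI}~(ii) with $y = 1_J$, the identities $T_J(x) = T_J(x,1_J)$ and $S_J(x) = T_J(x^\sharp,1_J)$, and the relation $m_{J,x}(x) = 0$. That is legitimate, since Prop.~\ref{p.FRECUI} is proved beforehand. It does draw on heavier input, namely Lemma~\ref{l.FISETR} and the reduction to separable cubic subfields via isotopes, whereas the paper's argument needs only $\lambda_J(u^3) = 3\lambda_J(u)$. In return, your route shows slightly more: $T_J$, $S_J$ and $N_J$ all take values in $\mfp$ on $\mfp_J$, so the cubic $m_{J,x}(\bft)$ reduces to $\bft^3$ modulo $\mfp$ for every $x \in \mfp_J$.
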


\begin{proof}
We show cyclically that each link of the preceding chain is contained in the next. To begin with,  since $\mfp_J \subseteq \mfo_J$ is the unique maximal ideal, so is $\mfp_J/\mfp\mfo_J \subseteq (\mfo_J)_{\bar F}$. Hence $\Nil((\mfo_J)_{\bar F}) \subseteq \mfp_J/\mfp\mfo_J$. Next, let $\varrho\:\mfo_J \to (\mfo_J)_{\bar F}$ be the natural map and $0 \ne u \in \mfp_J$. Then $u^3 = N_J(u)(N_J(u)^{-1}u^3)$, where the first factor belongs to $\mfp$ and the second one to $\mfo_J^\times$. Hence $u^3 \in \mfp\mfo_J$., so $\varrho(u)^3 = 0$. And finally, suppose $u \in \mfo_J$ has $\varrho(u) \in (\mfo_J)_{\bar F}$ nilpotent. If $u \notin \mfp_J$, the $u \in \mfo_J^\times$, forcing $\varrho(u) \in (\mfo_J)_{\bar F}^\times$, a contradiction. Hence $u \in \mfp_J$, and we conclude that the set of nilpotents in $(\mfo_J)_{\bar F}$, being the same as $\mfp_J/\mfp\mfo_J$, is an ideal in $(\mfo_J)_{\bar F}$ and thus belongs to $\Nil((\mfo_J)_{\bar F})$.
\end{proof}
	
\begin{prop} \label{p.CHAUNR}
For a Freudenthal division $F$-algebra $J$ of dimension $> 1$, the following conditions are equivalent.
\begin{itemize}
\item [(i)] $J$ is unramified.

\item [(ii)] $\mfo_J$ is a regular Freudenthal algebra over $\mfo$.

\item [(iii)] $\mfo_J$ is a Freudenthal algebra over $\mfo$.

\item [(iv)] $\mfo_J$ is a regular cubic Jordan algebra over $\mfo$.

\item [(v)] There exists a regular cubic Jordan $\mfo$-algebra $\mfo_J^\prime \subseteq J$ such that the inclusion $\mfo_J^\prime \hookrightarrow J$ is an ($\mfo \hookrightarrow F$)-semi-linear embedding and canonically induces an identification $\mfo_J^\prime \otimes_{\mfo} F = J$ as cubic Jordan $F$-algebras.
\end{itemize}
In this case, $\mfo_J^\prime = \mfo_J$, for any cubic Jordan $\mfo$-algebra $\mfo_J^\prime$ satisfying the hypotheses of ($\mathrm{v}$). 
\end{prop}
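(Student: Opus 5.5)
Since Prop.~\ref{p.FRECUI} provides a canonical cubic structure on the residue algebra, I would prove the equivalences by tracking the residue algebra $\bar J$. I would go through the cycle (i)$\Rightarrow$(ii)$\Rightarrow$(iii)$\Rightarrow$(iv)$\Rightarrow$(v)$\Rightarrow$(i), handling the final uniqueness assertion inside the last step. Two facts are used throughout. First, $\mfo_J$ is free of rank $n=\dim_F J$ over $\mfo$, and $(\mfo_J)_F=J$ by Cor.~\ref{c.NOSTRO}. Second, $(\mfo_J)_{\bar F}=\mfo_J/\mfp\mfo_J$, and by Prop.~\ref{p.NILOJA} its nil radical is $\mfp_J/\mfp\mfo_J$.

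(i)$\Rightarrow$(ii). If $e_{J/F}=1$, then \eqref{RAMONE} gives $\mfp_J=\mfp\mfo_J$, so $(\mfo_J)_{\bar F}=\bar J$ has dimension $n$ by \eqref{FUNDEQ}. By assumption $\bar J$ is separable, i.e.\ a Freudenthal (division) algebra over $\bar F$, and it is regular. Regularity of the bilinear trace over the PID $\mfo$ is the statement that its discriminant with respect to an $\mfo$-basis is a unit. This holds because reduction mod $\mfp$ gives the discriminant of $T_{\bar J}$, which is nonzero. So $\mfo_J$ is regular. To see that $\mfo_J$ is Freudenthal over $\mfo$, I would use that $\mfo$ is local: being a Freudenthal algebra is tested on fibres, where \cite[39.8]{MR4806163} applies together with the flatness criteria of \cite[Chap.~VII]{MR4806163}. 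Both fibres $J$ and $\bar J$ are Freudenthal. (ii)$\Rightarrow$(iii) is trivial. For (iii)$\Rightarrow$(iv), I would argue that a Freudenthal algebra over the local ring $\mfo$ has Freudenthal reduction $(\mfo_J)_{\bar F}$. That reduction has no nonzero nilpotent ideal, so by Prop.~\ref{p.NILOJA} we get $\mfp_J=\mfp\mfo_J$ and hence $e_{J/F}=1$. Regularity then follows if the reduction is regular, which needs care when $\ch\bar F=2$, and also in dimension $3$ when $\ch\bar F=3$. Here I would rather prove (iii)$\Rightarrow$(i) directly and reuse (i)$\Rightarrow$(ii). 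Separability of $\bar J$ means that $\bar J$, now equal to the reduction, is Freudenthal and thus stays simple under base change. The case $\dim J=1$ is excluded because that case is not Freudenthal in the relevant sense.

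(iv)$\Rightarrow$(v). Take $\mfo_J^\prime=\mfo_J$ and use Prop.~\ref{p.JAYOJA}. For (v)$\Rightarrow$(i) and uniqueness, I would show $\mfo_J^\prime=\mfo_J$. Since the embedding is semi-linear, the norm maps $\mfo_J^\prime$ into $\mfo$, so $\lambda_J\ge0$ on $\mfo_J^\prime$ and $\mfo_J^\prime\subseteq\mfo_J$. Both modules are lattices of rank $n$. Because $T_J$ takes values in $\mfo$ on $\mfo_J$ by \eqref{SHATESO}, $\mfo_J$ lies inside the trace-dual of $\mfo_J^\prime$. Regularity of $\mfo_J^\prime$ says exactly that this dual equals $\mfo_J^\prime$, so $\mfo_J\subseteq\mfo_J^\prime$. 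Hence $\mfo_J$ is regular, which gives (iv) again. For unramifiedness I would argue as follows. If $e=3$, pick $x\in\mfo_J$ with $\lambda_J(x)=1/3$. Every $y\in\mfp_J$ has $\lambda_J(y)\ge 1/3$, and then $T_J(x,y)\in\mfp$ by Prop.~\ref{p.FRECUI}(ii). Consequently $\mfp_J/\mfp\mfo_J$ is a nonzero subspace of the radical of the reduced trace form on $(\mfo_J)_{\bar F}$. That contradicts regularity. So $e=1$ and $\bar J=(\mfo_J)_{\bar F}$ carries a nondegenerate trace.

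The main obstacle is deducing separability of $\bar J$ from regularity alone, i.e.\ that a regular cubic Jordan division algebra over $\bar F$ of dimension $n\in\{3,9,27\}$ is Freudenthal. First I would try the following route: a nondegenerate bilinear trace rules out a nonzero nil radical after any base change, since the radical is trace-orthogonal to everything. Hence $\bar J_K$ is semisimple for all $K$. Combined with the dimension count and the classification in \cite[Exc.~42.21]{MR4806163} (or \cite[39.8]{MR4806163}), this should force absolute simplicity, or the form $E^{(+)}$ with $E$ étale when $n=3$.
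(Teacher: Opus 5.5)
Your proposal is correct and follows the paper's proof essentially step for step: the fibrewise test of the Freudenthal property and the discriminant reduction for (i)$\Rightarrow$(ii); the lattice comparison $\mfo_J^\prime\subseteq\mfo_J\subseteq(\mfo_J^\prime)^{\mathrm{dual}}=\mfo_J^\prime$ (the paper phrases this as $\mfo_J=\mfo_J^\prime\oplus\mfo_J^{\prime\perp}$ plus a rank count); and the observation that $\mfp_J/\mfp\mfo_J$ lies in the radical of the reduced bilinear trace, giving $e_{J/F}=1$. The only real deviation is that you get (iii)$\Rightarrow$(i) from the vanishing of the nil radical of the Freudenthal reduction $(\mfo_J)_{\bar F}$ via Prop.~\ref{p.NILOJA}, whereas the paper proves (iii)$\Rightarrow$(ii) by citing regularity of Freudenthal algebras of rank $\neq 6$. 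Also, your announced main obstacle is not one: separability of $\bar J$ (absolute semisimplicity) already follows because regularity survives every base change and the nil radical lies in the radical of the bilinear trace, so no appeal to the classification is needed, and the paper stops at exactly this point.
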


\begin{proof}
We will establish the following chain of implications:
\[
(\mathrm{i}) \Rightarrow (\mathrm{ii}) \Rightarrow (\mathrm{iii}) \Rightarrow (\mathrm{ii}) \Rightarrow (\mathrm{iv}) \Rightarrow (\mathrm{v}) \Rightarrow (\mathrm{iv}) \Rightarrow (\mathrm{i}) .
\] 

(i)$\Rightarrow$(ii). If $J$ is unramified, then \eqref{RAMONE} shows $\mfp_J= \mfp\mfo_J$, and $(\mfo_J)_{\bar F} = \bar J$ is a regular Freudenthal division algebra \cite[Thm.~46.8]{MR4806163}. Letting $(e_i)$ be a basis of $\mfo_J$ as an $\mfo$-module, the determinant of $(T_{\mfo_J}(e_i,e_j))$ becomes non-zero (in $\bar F$) after reduction $\bmod~\mfp$, hence is a unit in $\mfo$, forcing $\mfo_J$ to be a regular cubic Jordan $\mfo$-algebra. Given any field $K \in \oalg$, the unit homomorphism $\mfo \to K$ factors through $F$ (resp., $\bar F$), so Cor.~\ref{c.NOSTRO} (resp., Prop.~\ref{p.FRECUI}) yields $(\mfo_J)_K = J_K$ (resp., $(\mfo_J)_K =\bar J_K$), and this is either cubic \'etale or simple as a cubic Jordan $K$-algebra since $J$ (resp., $\bar J$) is Freudenthal. By definition \cite[39.8]{MR4806163}, therefore, $\mfo_J$ is a regular Freudenthal algebra over $\mfo$. 

(ii) $\Rightarrow$ (iii). Obvious.

(iii) $\Rightarrow$ (ii). Since $J$ is not of rank $6$ over $F$ \cite[Thm.~46.8]{MR4806163}, neither is $\mfo_J$ over $\mfo$. Hence the assertion follows from \cite[Cor.~39.15]{MR4806163}.

(ii) $\Rightarrow$ (iv) $\Rightarrow$ (v). Obvious.

(v) $\Rightarrow$ (iv). The set map $N_J\:J \to F$ restricts to the set map $N_{\mfo_J^\prime}\:\mfo_J^\prime \to \mfo$. Hence $\mfo_J^\prime \subseteq \mfo$ is a cubic Jordan $\mfo$-subalgebra. By regularity, therefore, $\mfo_J= \mfo_J^\prime \oplus \mfo_J^{\prime\perp}$ \cite[Lemma~11.10]{MR4806163}. But both $\mfo_J$ and $\mfo_J^\prime$ are free $\mfo$-modules of rank $\dim_F(J)$, and (iv) holds.	

(iv) $\Rightarrow$ (i). If $\mfo_J$ is a regular cubic Jordan $\mfo$-algebra, then $(\mfo_J)_{\bar F}$ is a regular cubic Jordan $\bar F$-algebra, so its nil radical, which agrees with $\mfp_J/\mfp\mfo_J$ by Prop.~\ref{p.NILOJA}, must be zero \cite[Exc.~34.23]{MR4806163}. Hence $e_{J/F} = 1$ by \eqref{RAMONE}, and $\bar J = (\mfo_J)_{\bar F}$ is a regular cubic Jordan division algebra. Thus $J$ is unramified. Moreover, the final statement of the proposition also holds, thanks to the proof of the implication (v)$\Rightarrow$(iv).
\end{proof}

\begin{rem} \label{r.UNRET}
\emph{The argument in the proof of the implication (i)$\Rightarrow$(ii) in Prop.~\ref{p.CHAUNR} to show that $\mfo_J$ is a regular cubic $\mfo$-algebra can be repeated verbatim to show that} a quadratic field extension $K/F$ is unramified if and only if $\mfo_K$ is a quadratic \'etale $\mfo$-algebra.    
\end{rem}

\begin{eg} \label{e.UNRAAS}
Let $A$ be an unramified cubic associative division $F$-algebra. $\vssp$ \\ 
(a) We deduce from Pop.~\ref{p.CHAUNR} (applied to $A^{(+)}$) that $\mfo_A$ is a regular cubic associative $\mfo$-algebra. For $\mu \in \mfo^\times \setminus N_A(A^\times)$, therefore, the first Tits construction $J := J(A,\mu)$ is a regular Freudenthal division algebra over $F$ \cite[Cors.~42.14 and 46.12]{MR4806163} which, since first Tits constructions are stable under base change, contains $\mfo_J^\prime := J(\mfo_A,\mu)$ in such a way that the hypotheses of Prop.~\ref{p.CHAUNR}~(v) are fulfilled. Thus $J$ is unramified, $\mfo_J^\prime = \mfo_J$, and \eqref{RAMONE} implies $\bar J = \mfo_J/\mfp\mfo_J = \mfo_J \otimes_\mfo \bar F = J(\bar A,\bar \mu)$ over $\bar F$. $\vssp$ \\
(b) On the other hand, if $\pi$ is a prime element of $\mfo$, then we claim that \emph{the first Tits construction $J := J(A,\pi) = A \oplus Aj_1 \oplus Aj_2$ is a ramified Freudenthal division $F$-algebra}. Indeed, since $A$ is unramified, we have $\lambda(N_A(x)) \equiv 0 \bmod 3$ for all $x \in A^\times$, while $\lambda(\pi) = 1$. Thus $\pi \notin N_A(A^\times)$, so $J$ is a Freudenthal division $F$-algebra, which must be ramified since $\lambda_J(j_1) = \frac{1}{3}$. By the fundamental equality \eqref{FUNDEQ}, $\bar J$ and $\bar A^{(+)} \subseteq \bar J$ have the same $\bar F$-dimension, and we conclude $\bar J = \bar A^{(+)}$. We will see in Cor.~\ref{c.TRICHO} below that, conversely, every ramified Freudenthal division algebra over $F$ has the form described above.  	
\end{eg}

\begin{lem} \label{l.UNRQUA}
Let $K/F$ be a quadratic field extension and $J$ a cubic Jordan division algebra over $F$ having $e_{J/F} = 1$. Then $J_K$ is a cubic Jordan division algebra over $K$ and $e_{J_K/K} = 1$. More precisely, there is a natural identification $\overline{J_K} = \bar J_{\bar K}$ as cubic Jordan $\bar K$-algebras.	
\end{lem}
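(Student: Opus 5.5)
The plan is to work with the valuation algebra and use the degree argument built into the valuation. Since $e_{J/F}=1$, \eqref{RAMONE} gives $\mfp_J=\mfp\mfo_J$, so $\bar J=\mfo_J\otimes_{\mfo}\bar F$. The cubic Jordan $\mfo$-algebra $\mfo_J$ of Prop.~\ref{p.JAYOJA} is free of rank $n:=\dim_F(J)$, and by Cor.~\ref{c.NOSTRO} its base change is $(\mfo_J)_F=J$. Since $K$ is complete as well, $\mfo_K$ is free of rank $2$ over $\mfo$. I will consider the cubic Jordan $\mfo_K$-algebra $M:=\mfo_J\otimes_{\mfo}\mfo_K$. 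It sits inside $J_K=M\otimes_{\mfo_K}K$, and its reduction is
\[
M\otimes_{\mfo_K}\bar K=\mfo_J\otimes_{\mfo}\bar K=(\mfo_J\otimes_{\mfo}\bar F)\otimes_{\bar F}\bar K=\bar J_{\bar K},
\]
with compatible norms, since norms commute with base change of polynomial laws.

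The key step is to show that $J_K$ is a division algebra, and that for $x\in J_K$ one has $\lambda_K(N_{J_K}(x))\in 3\lambda_K(K^\times)$ with $x$ normalized into $M\setminus\mfp_KM$. Take $0\ne x\in J_K$ and scale it by a power of a prime element $\pi_K$ of $K$ so that $x\in M\setminus\pi_KM$. Then the image $\bar x\in\bar J_{\bar K}$ is nonzero.

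The main obstacle is to ensure that $\bar J_{\bar K}$ is a division algebra, which is what gives $N(\bar x)\ne0$, i.e.\ $N_{J_K}(x)\in\mfo_K^\times$. I would split into cases according to $K/F$.

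In the first case $\bar K=\bar F$, so $K/F$ is totally ramified. Then $\bar J_{\bar K}=\bar J$, which is a division algebra. Consequently $N(x)$ is a unit for every primitive $x$, and every nonzero element is invertible.

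In the second case $\bar K/\bar F$ is quadratic. I would argue that $\bar J$, being a cubic Jordan division algebra over $\bar F$, stays division over a quadratic extension. The intended argument is the standard one that a cubic form anisotropic on $\bar J$ remains anisotropic under an extension of degree $2$ (Springer-type: an element $y$ of $\bar J_{\bar K}$ with $N(y)=0$, $y\ne0$, would give a nonzero nilpotent or a proper idempotent, which can be descended). For a cleaner path, I would use that a non-division cubic Jordan algebra contains a nonzero element $y$ with $y^\sharp=0$ or a nontrivial idempotent. Then one uses that the subalgebra $\bar F[u]$ generated by any $u\in\bar J$ is a field of degree $1$ or $3$, hence linearly disjoint from $\bar K$, so $\bar F[u]_{\bar K}$ is a field. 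I would try this first: write $y=u+v\omega$ and reduce to generic elements. This is where I expect the real work.

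Granting that, every primitive $x\in M$ has $N(x)\in\mfo_K^\times$. Hence $J_K$ is a division algebra, and $\lambda_{J_K}(x)=\frac13\lambda_K(N(x))\in\lambda_K(K^\times)$ for all $x$, since scaling by $\pi_K^m$ shifts the value by $m$. This gives $e_{J_K/K}=1$. It also shows that $\mfo_{J_K}=M$ and $\mfp_{J_K}=\pi_KM$. Therefore $\overline{J_K}=M/\pi_KM=\bar J_{\bar K}$ as cubic Jordan $\bar K$-algebras, the cubic structures matching by the uniqueness in Props.~\ref{p.JAYOJA} and \ref{p.FRECUI}.
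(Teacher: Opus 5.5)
Your argument is correct, but it takes a different route from the paper's. The paper first cites \cite[Cor.~46.3]{MR4806163} (a cubic Jordan division algebra stays division under a quadratic field extension) to know that $J_K$ is a division algebra. It then uses $\lambda_{J_K}\vert_J=\lambda_J$ to get a $\bar K$-homomorphism $\bar J_{\bar K}\to\overline{J_K}$, which is injective because $\bar J_{\bar K}$ is again division. It concludes with the chain $\dim_F(J)=\dim_{\bar F}(\bar J)=\dim_{\bar K}(\bar J_{\bar K})\le\dim_{\bar K}(\overline{J_K})\le\dim_K(J_K)$, where the last step is the fundamental inequality $ef\le n$ for $J_K$ over $K$. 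Equality throughout gives both the identification and $e_{J_K/K}=1$.

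You instead work with the explicit lattice $M=\mfo_J\otimes_{\mfo}\mfo_K$ and show that primitive vectors of $M$ have unit norm. This gives in one stroke that $J_K$ is division, that $\lambda_{J_K}(J_K^\times)=\lambda_K(K^\times)$, and the sharper description $\mfo_{J_K}=\mfo_J\otimes_{\mfo}\mfo_K$, $\mfp_{J_K}=\pi_K\mfo_{J_K}$. It needs neither the fundamental inequality nor the division property of $J_K$ as input, only that $\mfo_K$ is free of rank $2$, which holds because $F$ is complete. In the totally ramified case $\bar K=\bar F$ it is entirely self-contained.

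The step you flagged as the main obstacle, for $[\bar K:\bar F]=2$, is exactly the fact the paper itself uses for $\bar J_{\bar K}$, namely \cite[Cor.~46.3]{MR4806163}. You may simply cite it.

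Drop your ``cleaner path'' via idempotents and linear disjointness of $\bar F[u]$ from $\bar K$. Linear disjointness only controls elements of $\bar F[u]_{\bar K}$, not a general $y=u+v\omega$. If you want a self-contained proof, argue as follows.
\begin{itemize}
\item If $u,v$ are linearly dependent over $\bar F$, then $y\in\bar K^\times w$ for some $0\ne w\in\bar J$ with $N(w)=0$, which is impossible.
\item If they are independent, the binary cubic $N(su+tv)\in\bar F[s,t]$ is nonzero, since $N(u)\ne0$.
\item It vanishes at $(1,\omega)$ with $\omega\notin\bar F$, so it is divisible by the homogenized minimal polynomial of $\omega$, which is irreducible over $\bar F$ even when $\bar K/\bar F$ is inseparable.
\item The cofactor is a nonzero linear form over $\bar F$. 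Its zero $(s_0,t_0)\ne(0,0)$ gives $N(s_0u+t_0v)=0$ with $s_0u+t_0v\ne0$, contradicting that $\bar J$ is division.
\end{itemize}
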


\begin{proof}
$J_K$ is a cubic Jordan division algebra over $K$ \cite[Cor.~46.3]{MR4806163} containing $J$ as a cubic Jordan division $F$-subalgebra such that $\lambda_{J_K}\vert_J = \lambda_J$. Hence there is a natural $\bar F$-homomorphism$\, \bar J \to \overline{J_K}$, which in turn determines a canonical $\bar K$-homomorphism
\[
\bar J_{\bar K} \longrightarrow \overline{J_K}
\]	
of cubic Jordan $\bar K$-algebras \cite[Exc.~34.22~(a)]{MR4806163}. But $\bar J_{\bar K}$ is a cubic Jordan division algebra, so this homomorphism is injective. On the other hand, since $e_{J/F} = 1$, and by \cite[5.3]{MR0347922}, we have
\begin{align*}
\dim_F(J) = \dim_{\bar F}(\bar J) = \dim_{\bar K}(\bar J_{\bar K}) \le \dim_{\bar K}(\overline{J_K}) \le \dim_K(J_K) = \dim_F(J),
\end{align*}
hence erquality everywhere, and the lemma follows.
\end{proof}


\section{Classification theorems} \label{s.CLATHE} Using the preparations assembled in the preceding section, we will now be able to tackle the problem of classifying Freudenthal division algebras over $F$, more precisely, of reducing the classification problem to the corresponding problem over the residue field $\bar F$. In the important special case of Albert division algebras, this reduction has been carried out already in \cite{MR0379620}.The approach adopted here will allow for a uniform treatment of all Freudenthal division algebras over $F$. Special attention will be paid to the question of which Freudenthal division $F$-algebras are homogeneous.

The main ingredients of the reduction procedure needed for our results may be described as follows.

\begin{rem} \label{r.UNRAFI}
\emph{Let $\msB :=(K,B,\tau)$ be an associative involutorial system over $F$ as in \ref{ss.SETICO} such that $K$ is a field. Unless $\bar K/\bar F$ is an inseparable quadratic field extension, $\bar\msB := (\bar K,\bar B,\bar\tau)$, where $\bar\tau$ stands for the $\bar K/\bar F$-involution canonically induced by $\tau$ on $\bar B$, is an associative involutorial system over $\bar F$. Moreover, if $(p,\mu)$ is an admissible scalar for $\msB$, then without changing $\msB$, there is no harm in assuming $N_B(p) = n_K(\mu) = 1$ (\cite[(39.2)(2)]{MR1632779}), \cite[Exc.~44.33(c)]{MR4806163}), and then ($\bar p,\bar\mu$) is an admissible scalar for $\bar\msB$.}

\emph{In our subsequent considerations, in order to avoid confusion with residue homomorphisms, we deviate from the notation of \cite{MR4806163} by indicating the conjugation of a conic algebra \emph{not} by $x \mapsto \bar x$ \emph{but} by $x \mapsto x^\ast$.}
\end{rem}
%
\begin{thm} \label{UNSETI}
Let $\msB= (K,B,\tau)$ be an associative involutorial system over $F$ and $(p,\mu)$ an admissible scalar for $\msB$. Assume that the second Tits construction $J := J(\msB,p,\mu)$ is a division algebra, that the $F$-subalgebra $J_0 := H(\msB) \subseteq J$ is unramified, and that $K$ is a field. Then
\begin{itemize}
\item [(a)] $B$ is an unramified cubic associative division algebra over $K$.

\item [(b)] $K/F$ is an unramified quadratic field extension.

\item [(c)] $\mfo_{\msB} := (\mfo_K,\mfo_B,\tau\vert_{\mfo_B})$ is an associative involutorial system over $\mfo$, and if we assume $N_B(p) = n_K(\mu) = 1$ as in Remark~\ref{r.UNRAFI}, then
\begin{itemize}
\item [(c1)] ($p,\mu)$ is an admissible scalar for $\mfo_{\msB}$ such that the corresponding second Tits construction satisfies
\begin{align}
\label{MFOJAY} \mfo_J = J(\mfo_{\msB},p,\mu) = \mfo_{J_0} \oplus \mfo_Bj 
\end{align}
as regular Freudenthal $\mfo$-algebras.

\item [(c2)] $\bar\msB:= (\bar K,\bar B,\bar\tau)$ is an associative involutorial system over $\bar F$ allowing $(\bar p,\bar\mu)$ as an admissible scalar, $J$ is unramified, and there is a natural isomorphism $\bar J\cong J(\bar\msB,\bar p,\bar\mu)$ as cubic Jordan $\bar F$-algebras. 
\end{itemize}
\end{itemize}
\end{thm}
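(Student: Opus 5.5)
The plan is to transfer unramifiedness from $J_0$ to $K$ and $B$ first, and then build the $\mfo$-form of $J$ and apply Prop.~\ref{p.CHAUNR}(v).

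\emph{Step 1: $B$ is a division algebra.} Since $J$ is a division algebra, so is $J_0$. By Prop.~\ref{p.DISETI}, $\mu\notin N_B(B^\times)$. Because $K$ is a field, $J_0=H(B,\tau)$ is nine-dimensional with $g_2(J_0)=[B]$, and $J_0$ being division forces $[B]\neq 0$. So $B$ is a cubic associative division algebra over $K$.

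\emph{Step 2: parts (a) and (b).} By Prop.~\ref{p.CHAUNR}, $\mfo_{J_0}$ is a regular Freudenthal $\mfo$-algebra and $\bar J_0$ is a nine-dimensional Freudenthal division algebra over $\bar F$. Write $\bar J_0=H(\bar B',\bar\tau')$ via \eqref{FRENIN}, and note that $\Cent(\bar B')$ is quadratic \'etale over $\bar F$.

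To descend this to $K$, I would use that $J_0$ generates $B$ as an associative algebra. Then $\mfo_B$ is the $\mfo_K$-order generated by $\mfo_{J_0}$, and $\bar B$ contains the associative envelope of $\bar J_0$, which is $\bar B'$ of $\bar K'$-dimension $9$. Counting dimensions with \eqref{FUNDEQ} for $B$ over $K$ and for $K$ over $F$ should force $e_{B/K}=e_{K/F}=1$ and $\bar K\cong\Cent(\bar B')$ separable. That gives (a) and (b).

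An alternative route to the same conclusion: the skew elements $\{x\in B:\tau(x)=-x\}$ equal $\kappa J_0$ for $\kappa\in K$ with $\kappa^\ast=-\kappa$ (resp.\ a suitable trace-zero element in characteristic $2$). Ramification of $K$ would then make $\lambda_B$ take values in $\tfrac12\mathbb{Z}+\lambda_{J_0}(J_0^\times)$, which contradicts $e\in\{1,3\}$ for $B$.

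\emph{Step 3: part (c).} By Remark~\ref{r.UNRET} and (a), (b), $\mfo_K$ is quadratic \'etale over $\mfo$ and $\mfo_B$ is a regular cubic associative $\mfo_K$-algebra, via Prop.~\ref{p.CHAUNR} applied to $B^{(+)}$. The restriction of $\tau$ preserves $\mfo_B$ because $\tau$ preserves $N_B$ up to $\ast$. Hence $\mfo_{\msB}$ is an associative involutorial system over $\mfo$.

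Normalise $N_B(p)=n_K(\mu)=1$ as in Remark~\ref{r.UNRAFI}. Then $\lambda_{J_0}(p)=0$ and $\mu\in\mfo_K^\times$, so $(p,\mu)$ is admissible for $\mfo_{\msB}$. The second Tits construction $J':=J(\mfo_{\msB},p,\mu)=\mfo_{J_0}\oplus\mfo_Bj$ is a cubic Jordan $\mfo$-algebra with $J'\otimes_{\mfo}F=J$, since the construction commutes with base change. By \eqref{SETBLT}, its bilinear trace is the orthogonal sum of the regular form on $\mfo_{J_0}$ and the form $(u,v)\mapsto T_B(up\tau(v))+T_B(vp\tau(u))$ on $\mfo_B$. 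The latter is regular because $p$ is a unit and $T_{\mfo_B}$ is regular.

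Prop.~\ref{p.CHAUNR}(v) then yields that $J$ is unramified and $\mfo_J=J'$, which is (c1). Reducing modulo $\mfp$ gives $\bar J=\mfo_J\otimes\bar F=J(\bar\msB,\bar p,\bar\mu)$ by base-change compatibility, with $\bar\msB$ an involutorial system because $\bar K$ is separable. This is (c2).

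\emph{Main obstacle.} The hardest step is Step 2: showing rigorously that $B$ and $K$ are unramified from the sole hypothesis that $J_0$ is unramified. I would first try the dimension-count argument through the associative envelope, checking carefully that $\bar J_0$ embeds in $\bar B$ as the symmetric elements of $\bar\tau$.
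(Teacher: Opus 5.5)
There is a genuine gap in Step~2: the argument given for (b) does not work.

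\textbf{Why the dimension count fails.} It only bounds $\dim_{\bar F}\bar B$ from below by the dimension of the associative subalgebra of $\bar B$ generated by $\bar J_0$. Suppose $\Cent(\bar B')\cong\bar F\times\bar F$, i.e.\ $\bar J_0\cong D^{(+)}$ with $D$ central division of degree $3$ over $\bar F$. Then that subalgebra is a quotient of $D\times D^{\mathrm{op}}$ inside a division algebra, so it is just $D$ or $D^{\mathrm{op}}$, of $\bar F$-dimension $9$. You then get only $e_{B/K}e_{K/F}\le 2$. This says nothing about $e_{K/F}$ or the separability of $\bar K$, and ``$\bar K\cong\Cent(\bar B')$'' is impossible because the right side is not a field.

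\textbf{Why this case cannot be ignored.} The split case is exactly what a non-unramified $K$ produces. If $\bar K=\bar F$ or $\bar K/\bar F$ is purely inseparable, Lemma~\ref{l.UNRQUA} applied to $J_{0K}\cong B^{(+)}$ gives $\bar B^{(+)}\cong\bar J_{0\bar K}$, which forces $\Cent(\bar B')$ to split. So the count is circular at the decisive point.

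\textbf{Why the alternative route fails.} It produces no contradiction. If $K/F$ is ramified, then $\lambda_K(K^\times)=\frac12\IZ$ already. Values of $\lambda_B$ in $\frac12\IZ$ are therefore fully compatible with $e_{B/K}\in\{1,3\}$, since that index is taken over $K$, not over $F$. The case $e_{K/F}=1$ with $\bar K/\bar F$ inseparable is not touched at all.

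\textbf{The missing hypothesis.} You never use that $J$ itself is a division algebra, and you tacitly restrict to $\dim_F J_0=9$. The claim that $K$ being a field forces this is false: $B$ may be a cubic field over $K$, or $K$ itself, and these cases are needed in Cor.~\ref{c.TRICHO}. In those cases (b) cannot follow from properties of $J_0$ alone. For example, take $L/F$ unramified cubic, $K/F$ ramified quadratic, and $B=L\otimes_F K$ with $\tau$ the conjugation of $K$ extended $L$-linearly. This is an involutorial system with $J_0=L$ unramified and $K$ ramified.

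\textbf{How the paper proves (b).} Suppose $K$ is not unramified. Then the conjugation of $K$ induces the identity on $\bar K$. The normalization $n_K(\mu)=1$ gives $\bar\mu^2=1$, i.e.\ $\mu\equiv\pm1\bmod\mfp_K$. A Hensel-type lemma for norms of unramified division algebras then gives $\mu\in N_B(B^\times)$, contradicting Prop.~\ref{p.DISETI}. When $B$ is central of degree $3$, you could instead argue as in \ref{t.CEASDI}: $\tau$ would induce an involution of the first kind on the degree-$3$ division algebra $\bar B$, which is impossible.

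\textbf{Parts (a) and (c).} Part (a) needs no envelope argument and no restriction on $B$: $J_{0K}\cong B^{(+)}$ together with Lemma~\ref{l.UNRQUA} gives $e_{B/K}=1$ and $\bar B^{(+)}\cong\bar J_{0\bar K}$ directly. Your Step~3 is essentially the paper's proof of (c), once (a) and (b) are established.
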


\begin{proof}
(a) $K/F$ is a separable quadratic field extension by \cite[Cor.~46.11]{MR4806163}. From \cite[Exc.~44.29(a)]{MR4806163} we therefore deduce that there is a natural identification $J_{0K} =B^{(+)}$ as cubic Jordan $K$-algebras, so Lemma~\ref{l.UNRQUA} implies $e_{B/K} = 1$ and $\bar B^{(+)} \cong \bar J_{0\bar K}$. Since $\bar J_0$ is separable over $\bar F$ by hypothesis, so is $\bar B$ over $\bar K$, and we have shown (a).

(b) Arguing indirectly, assume that $K$ is \emph{not} unramified over $F$. Then $\bar K = \bar F$ or $\bar K/\bar F$ is an inseparable quadratic field extension. In any event, the conjugation of $\bar K$ as a conic $\bar F$-algebra is the identity. We may assume $N_B(p) = 1 = n_K(\mu)$ as in (c) and conclude $1 = \mu\mu^\ast$, hence $\bar 1 = \bar\mu\bar\mu^\ast = \bar\mu^2$, i.e., $\bar\mu = \pm 1$. Thus $\mu \equiv\pm 1 \bmod\mfp_K$, and \cite[Cor. of Lemma~8]{MR0379620} shows $\mu = N_B(w)$ for some $w \in B^\times$, a contradiction to \cite[Thm.~46.10]{MR4806163} and $J$ being a division algebra. This completes the proof of (b).

(c1) Since $\mfo_K$ is quadratic \'etale over $\mfo$ by (b) and Remark~\ref{r.UNRET}, it follows that $\mfo_{\msB} := (\mfo_K,\mfo_B,\tau\vert_{\mfo_B})$ is an associative involutorial system over $\mfo$ allowing $(p,\mu)$ as an admissible scalar. Hence $\mfo_J^\prime := J(\mfo_{\msB},p,\mu)$ makes sense as a cubic Jordan $\mfo$-algebra, which is regular by \cite[Cor.~44.22]{MR4806163}. The associative valuation $\lambda_B$ of $B$ restricts to a Jordan valuation on $J_0$ extending $\lambda$ and thus agrees with $\lambda_{J_0}$ \cite[Satz~5.1]{MR0347922}. Thus $H(\mfo_{\msB}) = \mfo_{J_0}$, and the second equation of \eqref{MFOJAY} holds. Moreover, since the second Tits construction displayed in \eqref{MFOJAY} is stable under base change, we have $(\mfo_J^\prime)_F = J$, so the hypotheses of Prop.~\ref{p.CHAUNR}~(v) are fulfilled, and we conclude $\mfo_J^\prime = \mfo_J$. This completes the proof of (c1).

(c2) The first assertion about $\bar\msB$, $\bar p$, $\bar\mu$ being obvious, we combine (c1) with Prop.~\ref{p.CHAUNR} to conclude that $J$ is unramified.  We clearly have $\bar J_0 \subseteq H(\bar B,\bar\tau)$, and combining \cite[Exc.~44.29]{MR4806163} with Lemma~\ref{l.UNRQUA}, we obtain
\[
H(\bar B,\bar\tau)_{\bar K} \cong \bar B^{(+)} \cong \overline{(J_0)_K} \cong (\bar J_0)_{\bar K}. 
\]
Comparing $\bar F$-dimensions, this implies $\bar J_0 = H(\bar\msB)$. Now \eqref{MFOJAY} yields $\mfp\mfo_J= \mfp\mfo_{J_0} \oplus \mfp\mfo_Bj$, and we conclude $\bar J \cong \bar J_0 \oplus \bar Bj = H(\bar\msB) \oplus \bar Bj =J(\bar B,\bar p,\bar\mu)$, as claimed.
\end{proof}
	
\begin{cor} \label{c.TRICHO}
For any cubic Jordan division algebra $J$ over $F$, precisely one of the following conditions holds.
\begin{itemize}
\item [(a)] $\bar J \cong E^{\prime(+)}$, for some purely inseparable field extension $E^\prime/\bar F$ of characteristic $3$ and exponent at most $1$.

\item [(b)] $J$ is unramified.

\item[(c)] $J$ is a ramified first Tits construction.
\end{itemize}
Moreover, in case (c), there exist an unramified cubic associative division algebra $A$ over $F$ and a prime element $\pi \in \mfo$ such that $J \cong J(A,\pi)$. 
\end{cor}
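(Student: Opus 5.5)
We split according to the ramification index $e := e_{J/F}\in\{1,3\}$ and the structure of $\bar J$. By \eqref{FUNDEQ}, $e f_{J/F} = \dim_F(J)$, so $\dim_{\bar F}(\bar J) = \dim_F(J)/e$. The residue algebra $\bar J$ is a cubic Jordan division algebra over $\bar F$ (Prop.~\ref{p.FRECUI}), hence either separable (\'etale or regular Freudenthal) or of the form $E'^{(+)}$ with $E'/\bar F$ purely inseparable of characteristic $3$ and exponent $1$. The latter alternative is case (a). Exclusivity is immediate: in (b) and (c) the residue algebra is separable by definition of (un)ramified, while $E'^{(+)}$ is not. So we may assume $\bar J$ separable. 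If $e=1$ we are in case (b) by definition. It remains to treat $e=3$ with $\bar J$ separable: then $J$ is ramified, and we must show that $J\cong J(A,\pi)$ as claimed. This also shows $J$ is a first Tits construction, giving (c).

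\emph{Construction in the ramified case.} Since $e=3$, there is $y\in J$ with $\lambda_J(y)=1/3$. Then $N_J(y)$ has valuation $1$, so $\pi := N_J(y)$ is a prime element of $\mfo$.

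We need an unramified cubic associative division algebra $A\subseteq J$ with $A^{(+)}$ a subalgebra. For $\dim J=3$ we take $A = F$; then $y\notin F$, $F[y]$ is a cubic field, $y^3 = \pi$ (after adjusting: $y^3 - T(y)y^2 + S(y)y = \pi$ with the trace terms in $\mfp$), and $J = F[y]\cong J(F,\pi)$. Here we use Example~\ref{e.UNRAAS}(b) to check consistency.

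In general $\dim_{\bar F}\bar J = \dim_F J/3 \in\{1,3,9\}$, and $\bar J$ is separable. We then lift $\bar J$:
\begin{itemize}
\item when $\bar J$ is $\bar F$, cubic \'etale (a field), or $\bar D^{(+)}$, we lift it, using completeness of $F$ (Hensel/Azumaya lifting over the henselian ring $\mfo$), to an unramified cubic associative division algebra $A$ over $F$ with $\bar A\cong\bar J$;
\item we then embed $A^{(+)}$ into $J$ by a Hensel-type argument on $\mfo_J$.
\end{itemize}
One must also exclude the possibility that $\bar J$ is a non-associative Freudenthal algebra (e.g.\ $\bar J$ of dimension $9$ of second-kind type, with $\dim J = 27$). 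For this we argue that $\mfo_{J}$ contains $\mfo_A$ and an element $j_1$ of valuation $1/3$ with $U$- and $\sharp$-products behaving as in the first Tits construction.

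With $A\subseteq J$ and $y$ chosen orthogonal to $A$ (for $\dim J=27$, Thm.~\ref{t.KUMFIT} directly gives $J\cong J(A,\lambda)$; for $\dim J=9$ use the analogous statement for $J(E,\lambda)$, i.e.\ $J(A,\lambda)$ with $A$ cubic \'etale), we get $J\cong J(A,\lambda)$ for some $\lambda\in F^\times$. Since $\lambda$ may be modified by $N_A(A^\times)$ and by inversion/squaring ($J(A,\lambda)\cong J(A,\lambda^{-1}) \cong J(A,\lambda N_A(a))$), and $\lambda(N_A(A^\times)) = 3\IZ$ as $A$ is unramified, we can arrange $\lambda(\lambda)\in\{0,1\}$. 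If $\lambda(\lambda)=0$, Example~\ref{e.UNRAAS}(a) makes $J$ unramified, a contradiction. So $\lambda(\lambda)=1$ and $\lambda$ is a prime element $\pi$, as required.

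\emph{Main obstacle.} The hardest step is producing the unramified associative subalgebra $A$ with $A^{(+)}\subseteq J$, i.e.\ showing $\bar J$ is (the Jordan algebra of) an associative algebra and lifting it into $J$. I would first try to pick $y\in J$ of valuation $1/3$ and use $U_y$: since $\lambda_J(U_yx)=\lambda_J(x)+2/3$, the map $x\mapsto \pi^{-1}U_y^{\,?}$ induces a semilinear automorphism-like map on $\bar J$ of order dividing $3$. Its fixed points, or an analysis of $\mfo_J$ filtered as $\mfo_J\supset\mfp_J\supset\mfp_J^2$ with three graded pieces each isomorphic to $\bar J$, should force $\bar J$ to carry an associative structure compatible with a $\ZZ/3$-grading. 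From this a lift of $\bar J$ to an \'etale/Azumaya $\mfo$-subalgebra of $\mfo_J$ can be constructed.
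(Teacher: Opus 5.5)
\textbf{There is a genuine gap.} Several parts of your proposal are sound and match the paper:
\begin{itemize}
\item the reduction to the case $e_{J/F}=3$ with $\bar J$ separable;
\item the exclusivity argument;
\item the final normalisation of the scalar: multiply by norms from $A$, whose valuations lie in $3\IZ$, and rule out valuation $0$ by Example~\ref{e.UNRAAS}~(a).
\end{itemize}
One slip in the normalisation: $J(A,\kappa)\cong J(A,\kappa^{-1})$ is not available for noncommutative $A$. You must pass to $A^{\mathrm{op}}$ at the same time, which is harmless because $A^{\mathrm{op}}$ is unramified too.

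The gap is the step you yourself flag as the main obstacle, and it is the whole content of the corollary. You never show that a ramified $J$ satisfies $J\cong J(A,\kappa)$ with $A^{(+)}$ lifting $\bar J$.
\begin{itemize}
\item Your way of excluding a non-associative $\bar J$ (say $\bar J=H(\bar B,\bar\tau)$ with $\Cent(\bar B)$ a field, when $\dim J=27$) is circular. The algebra $\mfo_A$ only exists once $\bar J$ is known to be associative.
\item The $\ZZ/3$-grading idea is only a heuristic.
\item The ``analogous statement'' you invoke in dimension $9$ is false. Example~\ref{NINEFI}~(b) gives a nine-dimensional Freudenthal division algebra $J$ with $J\not\cong J(L,a)$ for every separable cubic subfield $L\subseteq J$. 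Even when $H(B,\tau)\cong J(E,a)$ holds, it forces $\Delta(E)\cong\Cent(B)$.
\end{itemize}
So even in dimension $9$, where $\bar J$ is a cubic field and associativity is automatic, something must force the relevant quadratic \'etale algebra to split. This is exactly where ramification has to enter, and your argument never uses it there.

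A smaller point, in dimension $3$: $y^3=\pi$ does not follow from $T_J(y),S_J(y)\in\mfp$. You need a Hensel argument replacing $y$ by $yu$ with $u\in 1+\mfp_J$. That argument is available because residue characteristic $3$ puts $\bar J=\bar F$ into case (a).

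The paper avoids both the abstract Azumaya lifting and the Hensel embedding you propose.
\begin{itemize}
\item It lifts the at most two generators of $\bar J$ into $\mfo_J$. By the fundamental equality, the subalgebra $J_0$ they generate is an unramified Freudenthal subalgebra of dimension $\dim_F(J)/3$ with $\bar J_0=\bar J$, whatever the type of $\bar J$.
\item Then $(J,J_0)$ is a Freudenthal pair, so $J\cong J(\msB,p,\mu)$ is a second Tits construction with $H(\msB)=J_0$.
\item The decisive input is Thm.~\ref{UNSETI}. If $K=\Core(\msB)$ were a field, then $K/F$ would be unramified; otherwise $\bar\mu=\pm 1$ would make $\mu$ a norm from $B$, contradicting that $J$ is a division algebra. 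Then $\mfo_J=J(\mfo_{\msB},p,\mu)$ would be a regular Freudenthal $\mfo$-algebra, and $J$ would be unramified.
\item Hence $K=F\times F$ and $B=A\times A^{\mathrm{op}}$, so $J\cong J(A,\kappa)$ with $J_0\cong A^{(+)}$.
\end{itemize}
Associativity of $\bar J$ is thus a \emph{consequence} of this argument, not something established first. Without this mechanism, or a worked-out substitute such as a genuine graded-algebra argument, your proposal does not prove (c).
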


\begin{proof}
We may assume that $J$ is a ramified Freudenthal division $F$-algebra, of dimension $3^n$, $1 \leq n \leq 3$, and must show that the final statement of the corollary holds. By definition, $\bar J$ is a Freudenthal division algebra over $\bar F$, of dimension $3^{n-1}$. Note that $\bar J$ has $n - 1$ generators as a cubic Jordan algebra, which is clear for $n = 1,2$ and follows from \cite[Cor. of Lemma~9]{MR0379620} for $n = 3$. Lifting these generators from $\bar J$ to $\mfo_J$, the resulting quantities by \cite[Exc.~33.15]{MR4806163} and the fundamental equality \eqref{FUNDEQ}, generate an unramified Freudenthal division $F$-algebra $J_0 \subseteq J$ having dimension $3^{n-1}$ and residue algebra $\bar J$. Thus $(J,J_0)$ is a Freudenthal pair over $F$ in the sense of \cite[45.1]{MR4806163}, hence, by \cite[Thm.~45.10]{MR4806163}, admits \'etale elements. Combining \cite[Cor.~44.17]{MR4806163} with \cite[Lemma~A4.3]{garibaldi2025solutionsexercisesbookalbert}, we find an associative involutorial system $\msB= (K,B,\tau)$ over $F$ as well as an admissible scalar $(p,\mu)$ for $\msB$ such that $J \cong J(\msB,p,\mu)$ under an isomorphism matching $J_0$ with $H(\msB)$. Here $K$ is quadratic \'etale over $F$ \cite[Cor.~46.11]{MR4806163} and, thanks to Thm.~\ref{UNSETI}~(c2), cannot be a field because, otherwise, $J$ would be unramified. Summing up, therefore, $K =F \times F$ is split quadratic \'etale over $F$, $B =A \times A^{\mathrm{op}}$ for some cubic associative $F$-algebra $A$, and $J = J(A,\kappa)$ for some $\kappa \in F^\times$ is a first Tits construction \cite[Cor.~44.21]{MR4806163}. Since $J_0 = H(\msB) \cong A^{(+)}$, we conclude that $A$ is unramified. We may multiply $\kappa$ by an appropriate invertible norm of $A$ \cite[Exc.~42.22~(a)]{MR4806163} to assume $0 \le \lambda(\kappa) \le 2$, where we can rule out $\lambda(\kappa) = 0$, i.e., $\kappa \in \mfo^\times$, since this would imply by Example~\ref{e.UNRAAS}~(a) that $J$ is unramified. Thus $\lambda(\kappa) \in \{1,2\}$. Replacing $A$ by $A^{\mathrm{op}}$ if necessary, we may assume $\lambda(\kappa) = 1$ \cite[Exc.~42.22~(c), (d)]{MR4806163}, i.e., $\pi := \kappa$ is a prime element of $\mfo$.
\end{proof}

\begin{cor} \label{c.RAMISP}
If $J$ is a ramified Freudenthal division $F$-algebra, then so is every isotope of $J$.	
\end{cor}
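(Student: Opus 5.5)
The plan is to use the trichotomy of Cor.~\ref{c.TRICHO}. Let $J$ be a ramified Freudenthal division $F$-algebra and $q \in J^\times$. Then $J^{(q)}$ is again a Freudenthal algebra of the same dimension, and it is again a division algebra. Indeed, $U^{(q)}_x = U_xU_q$ is bijective whenever $U_x$ is, so $J^{(q)\times} = J^\times$. We must therefore show that $J^{(q)}$ is ramified, i.e., that $e_{J^{(q)}/F} = 3$ and that $\overline{J^{(q)}}$ is separable over $\bar F$.

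For the ramification index, the plan is to compare valuations directly. The norm of the isotope is $N_{J^{(q)}} = N_J(q)N_J$. By \eqref{LAMJAY}, this gives
\[
\lambda_{J^{(q)}}(x) = \lambda_J(x) + \tfrac{1}{3}\lambda\big(N_J(q)\big) = \lambda_J(x) + \lambda_J(q).
\]
So $\lambda_{J^{(q)}}(J^\times)$ is a translate of $\lambda_J(J^\times)$ by $\lambda_J(q)$, and since it is a group containing $0$, the two value groups coincide. Hence $\lambda_{J^{(q)}}(J^\times) = \lambda_J(J^\times) \ne \IZ$, and \eqref{RAMONE} gives $e_{J^{(q)}/F} = 3$.

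For separability of the residue algebra, I would exclude case (a) of Cor.~\ref{c.TRICHO} for $J^{(q)}$. Since $e_{J^{(q)}/F} = 3$, case (b) is already excluded, so only case (a) remains to rule out. In that case $\overline{J^{(q)}}$ would be a purely inseparable field of exponent one in characteristic $3$. The cleanest route is through the explicit model $J \cong J(A,\pi)$ from Cor.~\ref{c.TRICHO}, with $A$ unramified.

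\textbf{Step 1: reduce $q$ to $A$.} Replacing $q$ by $q' = \alpha U_y q$ changes $J^{(q)}$ only up to isomorphism, using $J^{(U_yq)} \cong J^{(q)}$ via $U_y$ and scaling by $\alpha \in F^\times$. I would use the structure group of $J(A,\pi)$, as in the proof of Thm.~\ref{embed} via \cite[Exc.~42.25]{MR4806163}, together with $\lambda_J$, to move $q$ into $\mfo_J^\times$. Reducing modulo $\mfp_J$ then puts it in $\bar J = \bar A^{(+)}$, and lifting places it in $\mfo_A^\times$.

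\textbf{Step 2: recognize the isotope.} For $q \in A^\times$, the isotope $J(A,\pi)^{(q)}$ should be a first Tits construction $J(A',\pi')$, where $A'$ is an isotope of $A$ (\cite[Exc.~42.25]{MR4806163}). Then $A'$ is again unramified, being an isotope of an unramified algebra, with $\lambda(\pi') = 1$. Example~\ref{e.UNRAAS}~(b) then shows that $J^{(q)}$ is ramified.

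The main obstacle is Step 1, namely showing that every isotope class is represented by some $q$ in $A$ (or in $\mfo_J^\times$). The alternative I would try is a direct argument. Show that $\mfo_{J^{(q)}} = \mfo_J$ for $q \in \mfo_J^\times$, which follows from the valuation shift computed above. Then $\overline{J^{(q)}} = \bar J^{(\bar q)}$, and an isotope of the separable algebra $\bar J$ is again separable. The reduction to $q \in \mfo_J^\times$ uses that $\lambda_J(J^\times) = \tfrac13\IZ$ is realized by powers of $j_1$, the element with $\lambda_J(j_1) = \tfrac13$. Since $\lambda_J(U_{j_1}q) = \lambda_J(q) + \tfrac23$, applying $U_{j_1}$ suitably often and scaling by $\pi$ adjusts $\lambda_J(q)$ to $0$.
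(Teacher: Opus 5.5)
Your final ``direct'' argument is correct and is essentially the paper's proof. You keep $e=3$ via the shift $\lambda_{J^{(q)}} = \lambda_J + \lambda_J(q)$ (the paper cites the valuation-theoretic Satz~6.3(b) instead), move $q$ into $\mfo_J^\times$ without changing the isotope class, and read off $\overline{J^{(q)}} = \bar J^{(\bar q)} \cong \bar A^{(+)}$, which is separable. Your reduction via $J^{(\alpha U_y q)} \cong J^{(q)}$ is more direct than the paper's route through $J^{(j_1)} \cong J$ and $A^{\mathrm{op}}$, and you should drop the Step~1/Step~2 route: lifting $\bar q$ from $\bar J$ to an element of $\mfo_A^\times$ does not by itself preserve the isotope class, and the direct argument makes that route unnecessary.
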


\begin{proof}
For any $p \in J^\times$, we deduce from \cite[Satz~6.3~(b)]{MR0347922} that $J^{(p)}$ has ramification index $3$, so it suffices to show that its residue algebra is separable. In order to see this, we apply Cor.~\ref{c.TRICHO} and realize
\[
 J = J(A,\pi) = A \oplus Aj_1 \oplus Aj_2
\]	
as a first Tits construction, where $A$ is an unramified cubic associative division algebra and $\pi \in \mfo$ is a prime element. We may clearly assume $\lambda_J(p) \in \{0,1,2\}$, where $\lambda_J(p) = 0$ implies $p \in \mfo_J^\times$, and $\overline{J^{(p)}} \cong \bar J^{(\bar p)} \cong \bar A^{(+)(\bar p)} \cong \bar A^{(+)}$ is separable. Hence, passing to $A^{\mathrm{op}}$ if necessary, we may assume $\lambda_J(p) = 1$ \cite[Exc.~42.22~(c), (d)]{MR4806163}. From \eqref{UOPCUB}, \eqref{FITBLT} and \eqref{SETBLT} we deduce $U_{j_2}1_J = \pi j_1$. By \cite[Cor.~31.23]{MR4806163}, therefore, $J^{(j_1)} \cong J$. On the other hand, setting $q := U_{j_1^{-1}p}$, \cite[Thm.~31.10]{MR4806163} implies we have $J^{(p)} \cong (J^{(j_1)})^q$ and $q \in \mfo_{J^{(j_1)}}^\times$. Thus we are reduced to the case$\lambda_J(p) = 0$, which has been settled before.
\end{proof}

\begin{cor} \label{RAFRHO} 
Ramified Freudenthal division algebras over $F$ are homogeneous.
\end{cor}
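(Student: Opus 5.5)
Let $J$ be a ramified Freudenthal division algebra over $F$ and fix $p \in J^\times$; we must show $J^{(p)} \cong J$. By Cor.~\ref{c.TRICHO} we may write $J = J(A,\pi)$ with $A$ an unramified cubic associative division algebra over $F$ and $\pi \in \mfo$ a prime element. By Cor.~\ref{c.RAMISP}, the isotope $J^{(p)}$ is again a ramified Freudenthal division algebra. Applying Cor.~\ref{c.TRICHO} to it, we get $J^{(p)} \cong J(A',\pi')$ with $A'$ unramified and $\pi'$ prime. The plan is therefore to show that $A' \cong A$ (or $A^{\mathrm{op}}$), and that $\pi'$ may be replaced by $\pi$.

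First we identify $A'$. The residue algebra of $J(A,\pi)$ is $\bar A^{(+)}$ by Example~\ref{e.UNRAAS}~(b), and similarly the residue algebra of $J(A',\pi')$ is $\bar A'^{(+)}$. The proof of Cor.~\ref{c.RAMISP} already shows that $\overline{J^{(p)}}$ is isotopic to $\bar J$: in the case $\lambda_J(p)=0$ it is $\bar J^{(\bar p)}$, and the other cases are reduced to this one via $J^{(j_1)} \cong J$. For an associative algebra, isotopes of $\bar A^{(+)}$ are isomorphic to $\bar A^{(+)}$. Hence $\bar A'^{(+)} \cong \bar A^{(+)}$, which gives $\bar A' \cong \bar A$ or $\bar A^{\mathrm{op}}$. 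Unramified cubic associative division algebras over the complete field $F$ are determined by their residue algebras, by the standard Hensel lifting of (Azumaya, resp.\ étale) $\mfo$-orders, using Prop.~\ref{p.CHAUNR}. So $A' \cong A$ or $A^{\mathrm{op}}$. Since $J(A^{\mathrm{op}},\pi^2)\cong J(A,\pi)$ by \cite[Exc.~42.22]{MR4806163}, we may assume $A' = A$, at the cost of possibly replacing $\pi'$ by $\pi'^2$ times a unit. We then normalize the valuation back to $1$ using the same exercise.

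It remains to show $J(A,\pi') \cong J(A,\pi)$ for any two prime elements $\pi,\pi'$. Write $\pi' = u\pi$ with $u \in \mfo^\times$. Since $A$ is unramified, $\bar A$ is a central simple (or cubic étale) algebra over $\bar F$ whose reduced norm is surjective onto $\bar F^\times$; when $\bar A$ is a division algebra this is the key point. By Hensel's lemma, every unit of $\mfo$ is then a norm from $\mfo_A^\times$. So $u = N_A(a)$ for some $a \in A^\times$, and \cite[Exc.~42.22~(a)]{MR4806163} gives $J(A,u\pi) \cong J(A,\pi)$. Therefore $J^{(p)} \cong J$.

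The main obstacle is the surjectivity of the norm $N_{\bar A}\colon \bar A^\times \to \bar F^\times$ on the residue level. This is not automatic for an arbitrary residue field. If it fails, I would instead argue directly with the structure group. Using the autotopy $J^{(j_1)}\cong J$ from the proof of Cor.~\ref{c.RAMISP}, it suffices to treat $p \in \mfo_J^\times$. For such $p$, I would lift an autotopy of $\bar J = \bar A^{(+)}$ sending $\bar p$ to $1$, namely $x \mapsto \bar a x \bar b$, to the map $x_0+x_1j_1+x_2j_2 \mapsto a x_0 b + \cdots$ in $\Str(J(A,\pi))$, along the lines of the map $\chi$ in the proof of Thm.~\ref{embed}. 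This reduces $p$ to an element $\equiv 1 \bmod \mfp_J$. Finally, I would show that $1+\mfp_J$ lies in the orbit of $1$ by a successive-approximation (Hensel) argument, which converges by completeness.
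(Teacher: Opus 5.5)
Your main route does not just lack a norm-surjectivity lemma: the intermediate claims it relies on are false.

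First, in the Albert case the residue algebra $\bar A^{(+)}$ does not distinguish $A$ from $A^{\mathrm{op}}$, and your normalization does not repair this. Trading $J(A^{\mathrm{op}},\pi')$ for $J(A,\pi'^{2}\cdot\text{unit})$ produces a scalar of valuation $\equiv 2 \bmod 3$. Since $\lambda(N_A(A^\times))\subseteq 3\mathbb{Z}$ for unramified $A$, the only way back to valuation $1$ is to pass to $A^{\mathrm{op}}$ again.

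Second, and more seriously, ``$J(A,u\pi)\cong J(A,\pi)$ for all $u\in\mfo^\times$'' fails in general. Take $F=\mathbb{Q}((\bft))$ and $E/F$ unramified with $\bar E=\mathbb{Q}(\zeta_7+\zeta_7^{-1})$, which is cyclic with $2$ inert. Then $J(E,\bft)$ and $J(E,2\bft)$ are $D^{(+)}$ and $D'^{(+)}$ for the cyclic algebras $D=(E/F,\sigma,\bft)$ and $D'=(E/F,\sigma,2\bft)$. Here $D'\not\cong D$ and $D'\not\cong D^{\mathrm{op}}$, since neither $2$ nor $2\bft^2$ is a norm from $E$. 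So this route could only succeed by controlling which prime elements actually arise from isotopes, and that is the original question.

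Your fallback plausibly reduces to $p\in 1+\mfp_J$, though the lifts of $x\mapsto\bar a x\bar b$ to $\Str(J)$ still have to be written down and checked (for example, the maps $\chi$ from the proof of Thm.~\ref{embed} in dimension $9$). The decisive claim $1+\mfp_J\subseteq\Str(J)1_J$, however, is only asserted.
\begin{itemize}
\item For $\ch\bar F\neq 2$ it is immediate. Any $q\in 1+\mfp_J$ lies in the complete field $F[q]$, Hensel's lemma gives $y\in F[q]$ with $y^2=q$, and then $q=U_y1_J$.
\item For $\ch\bar F=2$, squares no longer exhaust $1+\mfp_J$, and the lifted maps act only through $\bar J$. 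You name no other elements of $\Str(J)$ to drive a successive approximation, so the proof is incomplete there.
\end{itemize}

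The idea you are missing is that you never need to compare $(A',\pi')$ with $(A,\pi)$. It suffices that $J^{(p)}$ is \emph{some} first Tits construction, which you already get from Cors.~\ref{c.RAMISP} and \ref{c.TRICHO}.
\begin{itemize}
\item In dimension $27$, $J$ itself is a first Tits construction, hence homogeneous by Thm.~\ref{t.FITIHO}.
\item In dimension $9$, write $J=H(B,\tau)$. Every $K/F$-involution is some $\tau^p$ with $H(B,\tau^p)\cong J^{(p)}$, so $\tau^p$ is distinguished by Thm.~\ref{t.CHADIS}. Prop.~\ref{p.HOMDIS} then yields homogeneity. Equivalently, $f_1$ and $g_2$ are isotopy invariants, $f_3$ vanishes for both $J$ and $J^{(p)}$, and these invariants classify.
\end{itemize}
This is the paper's argument.
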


\begin{proof}
Let $J$ be a ramified Freudenthal division algebra over $F$, of dimension $3^n$, $0 \leq n \leq3$. For $n = 0,1$, there is nothing to prove. If $n = 3$, $J$ is an Albert algebra, and the result holds under far more general circumstances (Thm.~\ref{t.FITIHO}) since $J$ by Cor.~8.4 is a first Tits construction. We are this left with the case that $J$ has dimension $9$. Writing $J = H(B,\tau)$ for some central simple associative $F$-algebra $(B,\tau)$ of degree $3$ with unitary involution, and setting $K := \Cent(B)$, we must show by Prop.~\ref{p.HOMDIS} that all $K/F$- involutions of $B$ are distinguished. Let $\tau^\prime$ be any $K/F$-involution of $B$. By \ref{ss.ISDINV}, we have $\tau^\prime = \tau^p$ for some $p \in J^\times$, hence $H(B,\tau^\prime) \cong J^{(p)}$. But $J^{(p)}$ is ramified by Cor.~\ref{c.RAMISP}, hence a first Tits construction by Cor.~\ref{c.TRICHO}. By Thm.~\ref{t.CHADIS}, therefore, $\tau^\prime$ is distinguished.
\end{proof}	

It remains to investigate \emph{unramified} Freudenthal division $F$-algebras of dimension $9$, the case of dimension $1$ (i.e., the base field) being trivial, of dimension $3$ (i.e., separable cubic field extensions) being part of ordinary valuation theory, and of dimension $27$ (i.e., Albert division algebras) having been settled in \cite{MR0379620}. As before, the question of homogeneity will be of central importance in our investigation. We begin by setting the terminology straight.

\begin{ter} \label{t.CEASDI}
\emph{Let $k$ be an arbitrary field. In analogy to \cite[\S4]{MR0379620}, we define a central associative division algebra of degree $3$ with unitary involution over $k$ as a pair $(B,\tau)$, where $B$ is an associative division $k$-algebra having degree $3$ over its center $K := \Cent(B)$, a separable quadratic extension field of $k$, and $\tau$ is a $K/k$-involution of $B$. Specializing $k$ to our complete field $F$, we say that $(B,\tau)$ is \emph{unramified} if $B$ is unramified over $F$. In this case, $\IZ= \lambda(F^\times) \subseteq \lambda_K(K^\times) \subseteq \lambda_B(B^\times)$, and $B$ being unramified over $F$ implies that the extension $K/F$ has ramification index $1$. Moreover, by \cite[Satz~5.2]{MR0347922}, $\bar B$ is a central associative division algebra of degree $3$ over $\bar K$. Assuming that $\bar\tau$ induces the identity on $\bar K$ would imply that $\bar\tau$ itself is an involution of the first kind of $\bar B$, a contradiction (\cite[Chap.~8, Thm.~8.4]{MR770063}, \cite[Prop.~4.5.13]{GilleSz}). Thus $K/F$ is unramified, and $(\bar B,\bar\tau)$ is a central associative division algebra of degree $3$ with unitary involution over $\bar F$. It follows that $J := H(B,\tau)$ is an unramified Freudnthal division $F$-algebra of dimension $9$ satisfying $\bar J = H(\bar B,\bar\tau)$ \cite[Prop.~2]{MR0379620}}.
\end{ter}

\begin{thm} \label{t.UNRALI}
If $J$ is an unramified Freudenthal division algebra of dimension $9$ over $F$, then $\bar J$ is a Freudenthal division algebra of dimension $9$ over $\bar F$. Conversely, let $J^\prime$ be a Freudenthal division algebra over $\bar F$. Then there exists an unramified Freudenthal division algebra $J$ of dimension $9$ over $F$, unique up to isomorphism, such that $\bar J	\cong J^\prime$.
\end{thm}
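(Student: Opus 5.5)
The first assertion follows directly from the definitions. Since $J$ is unramified, $e_{J/F}=1$, so the fundamental equality \eqref{FUNDEQ} gives $\dim_{\bar F}\bar J=9$. Moreover $\bar J=\mfo_J/\mfp_J$ is a Jordan division algebra over $\bar F$ by \ref{ss.CUJOCO}, and it is separable by the definition of unramified. By Prop.~\ref{p.CHAUNR}, $\mfo_J$ is a regular Freudenthal $\mfo$-algebra. Hence $\bar J=(\mfo_J)_{\bar F}$ is a (regular) Freudenthal algebra of dimension $9$, and it is a division algebra.

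For the converse, I would realize $J'$ as $H(B',\tau')$ via \eqref{FRENIN}, where $(B',\tau')$ is a central simple $\bar F$-algebra of degree $3$ with unitary involution. Since $J'$ is a division algebra, $g_2(J')\neq0$, so $B'$ is a division algebra. I would first treat the case where the centre $K'$ of $B'$ is a field. The plan is to lift everything to $F$. Let $K$ be the unramified quadratic extension of $F$ with residue field $K'$. Let $\mfo_B$ be an Azumaya $\mfo_K$-algebra lifting $B'$; this exists by Hensel's lemma and the isomorphism $\mathrm{Br}(\mfo_K)\cong\mathrm{Br}(\bar K)$ for complete discrete valuation rings. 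Next, lift the unitary involution $\tau'$ to an $\mfo_K/\mfo$-involution $\tau$ of $\mfo_B$. For this I would invoke the smoothness of the scheme of unitary involutions, or equivalently Hensel's lemma applied to the symmetric element $p'$ with $\tau'=\tau_0'^{\,p'}$ relative to some lifted involution $\tau_0$ obtained from a corestriction argument. Then $B:=\mfo_B\otimes F$ is a division algebra, being unramified with division residue algebra $B'$. The pair $(B,\tau)$ is therefore an unramified central division algebra with unitary involution as in Terminology~\ref{t.CEASDI}, so $J:=H(B,\tau)$ is unramified with $\bar J=H(B',\tau')\cong J'$. Uniqueness in this case follows from Thm.~\ref{UNSETI}(a),(b): any unramified $J$ with $\bar J\cong J'$ has $J=H(B,\tau)$ with $K$ unramified and $B$ unramified. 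Unramified Azumaya algebras and their involutions are determined by their residues, using $\mathrm{Br}(\mfo_K)\cong\mathrm{Br}(\bar K)$ and the fact that $H^1$ of a smooth group scheme over $\mfo$ injects into $H^1$ over $\bar F$. Alternatively, one can compare the invariants $f_1,f_3,g_2$ of \ref{ss.INMOTH}: each lifts uniquely under unramified reduction, since unramified Pfister forms and unramified Brauer classes are determined by their residues.

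If $K'=\bar F\times\bar F$, then $J'\cong D'^{(+)}$ with $D'$ a central division algebra of degree $3$. I would take $D$ to be the unramified lift of $D'$ and set $J:=D^{(+)}$, again using Thm.~\ref{UNSETI} when $K$ is a field, and Example~\ref{ss.CUASDI} for uniqueness.

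I expect the main obstacle to be lifting the unitary involution together with the uniqueness statement. My first attempt would be to phrase both in terms of the regular Freudenthal $\mfo$-algebra $\mfo_J$ of Prop.~\ref{p.CHAUNR}. Freudenthal $\mfo$-algebras of rank $9$ are forms of a split one, classified by $H^1(\mfo,\bfAut)$ with a smooth automorphism group scheme, and Hensel's lemma gives the bijection $H^1(\mfo,\cdot)\to H^1(\bar F,\cdot)$ for the complete ring $\mfo$. Granted this, existence follows by lifting the class of $J'$. Uniqueness follows because any unramified $J$ with $\bar J\cong J'$ has $\mfo_J$ a Freudenthal $\mfo$-algebra reducing to $J'$, and hence isomorphic to the lift.
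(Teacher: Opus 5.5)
Your argument is correct in outline, but it follows a different route from the paper's. The paper writes $J'=H(B',\tau')$, simply declares that one \emph{may assume} $K'=\Cent(B')$ is a field, and then quotes Thm.~1 of the Petersson--Racine reference. That theorem directly supplies an unramified $(B,\tau)$ over $F$, unique up to isomorphism, with $(\bar B,\bar\tau)\cong(B',\tau')$. Terminology~\ref{t.CEASDI} then gives $J:=H(B,\tau)$ unramified with $\bar J=H(\bar B,\bar\tau)$, and uniqueness of $J$ is read off from uniqueness of $(B,\tau)$.

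You instead reprove that lifting theorem from general principles: Azumaya's $\mathrm{Br}(\mfo_K)\cong\mathrm{Br}(\bar K)$, lifting of the unitary involution, and Hensel lifting of torsors under smooth group schemes. Your last variant is the cleanest. There you lift the class of $J'$ to a regular Freudenthal $\mfo$-algebra $\mathcal J$ and return via Prop.~\ref{p.CHAUNR}(v). This covers the split case $K'=\bar F\times\bar F$, which the paper does not address, uniformly with the field case. It also makes explicit why an arbitrary unramified $J$ with $\bar J\cong J'$ is determined: its $\mfo_J$ reduces to $J'$, a point the paper leaves implicit. The price is the imported machinery, namely smoothness of the automorphism group scheme over $\mfo$ and bijectivity of $H^1(\mfo,G)\to H^1(\bar F,G)$. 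You also still owe the check that $\mathcal J_F$ is a division algebra before applying Prop.~\ref{p.CHAUNR}(v). This is easy: for a prime element $\pi$ of $\mfo$, a nonzero $x\in\mathcal J\setminus\pi\mathcal J$ has $\bar x\neq 0$ in the division algebra $J'$, so $N(x)\in\mfo^\times$.

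Two smaller points. First, Thm.~\ref{UNSETI}(b) cannot be cited for a nine-dimensional $J$, since its proof uses the admissible scalar $\mu$ and the division property of the $27$-dimensional algebra. For $H(B,\tau)$ alone, the argument of Thm.~\ref{UNSETI}(a) via Lemma~\ref{l.UNRQUA} gives $B$ unramified over $K$ with $\bar B$ central of degree $3$ over $\bar K$. That $K/F$ is unramified then follows as in Terminology~\ref{t.CEASDI}: otherwise conjugation reduces to the identity on $\bar K$, and $\bar\tau$ would be an involution of the first kind on a central division algebra of degree $3$. Second, your alternative via $f_1,f_3,g_2$ is unsupported as stated. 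You would need to show that $f_3(J)$ and $g_2(J)$ of an unramified $J$ are themselves unramified, with residues $f_3(\bar J)$ and $g_2(\bar J)$.
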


\begin{proof}
The first part follows directly from the definitions. For the second part, write $J^\prime = H(B^\prime,\tau^\prime)$ for some central simple associative	algebra $(B^\prime,\tau^\prime)$ of degree $3$ with unitary involution over $\bar F$. We may assume that $K^\prime := \Cent(B^\prime)$ is a field, forcing $(B^\prime,\tau^\prime)$ to be a central associative division algebra of degree $3$ with unitary involution over $\bar F$. Applying \cite[Thm.~1]{MR0379620}, we find an unramified central associative division algebra $(B,\tau)$ of degree $3$ with unitary involution over $F$, unique up to isomorphism, such that $(\bar B,\bar\tau) \cong (B^\prime,\tau^\prime)$. By \ref{t.CEASDI}, therefore, $J := H(B,\tau)$ is an unramified Freudenthal division algebra of dimension $9$ over $F$ satisfying $\bar J \cong J^\prime$. Uniqueness of $J$ follows from uniqueness of $(B,\tau)$.
\end{proof}	

\begin{cor} \label{c.REDIIN}
If $(B,\tau)$ is an unramified associative division algebra of degree $3$ with unitary involution over $F$, then $\tau$ is distinguished if and only if $\bar\tau$ is distinguished.	
\end{cor}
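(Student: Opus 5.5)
The plan is to use the characterization of distinguished involutions through the first Tits construction (Thm.~\ref{t.CHADIS}), together with the description of unramified Freudenthal division algebras of dimension $9$ from Terminology~\ref{t.CEASDI}, Thm.~\ref{UNSETI} and Thm.~\ref{t.UNRALI}. Throughout, put $J := H(B,\tau)$. By \ref{t.CEASDI}, $J$ is an unramified Freudenthal division $F$-algebra with $\bar J = H(\bar B,\bar\tau)$, and $(\bar B,\bar\tau)$ is a central associative division algebra of degree $3$ with unitary involution over $\bar F$.

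\emph{Forward direction.} Assume $\tau$ is distinguished. By Thm.~\ref{t.CHADIS}, $J \cong J(E,\mu)$ for some cubic \'etale $E$ and some $\mu \in F^\times$. Since $J$ is a division algebra, $E$ is a field (Prop.~\ref{p.DIFITI}). Because $J$ is unramified, $E$ must be unramified as well: it is a subalgebra of $J$, and $e_{E/F}$ divides $e_{J/F} = 1$, using \cite[Satz~6.3]{MR0347922}. Its residue field $\bar E$ is separable, being contained in the separable algebra $\bar J$.

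We may then multiply $\mu$ by norms of $E$ and, if necessary, replace $E$ by its opposite (which is harmless, $E$ being commutative), so as to arrange $0 \le \lambda(\mu) \le 2$. The cases $\lambda(\mu) = 1,2$ are excluded: by Example~\ref{e.UNRAAS}~(b), they would make $J$ ramified. Hence $\mu \in \mfo^\times$. Example~\ref{e.UNRAAS}~(a) then gives $\bar J \cong J(\bar E,\bar\mu)$, so $\bar J$ is a first Tits construction. Applying Thm.~\ref{t.CHADIS} over $\bar F$ shows that $\bar\tau$ is distinguished.

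\emph{Converse.} Assume $\bar\tau$ is distinguished. Thm.~\ref{t.CHADIS} gives $\bar J \cong J(E',\mu')$ with $E'$ cubic \'etale and $\mu' \in \bar F^\times$; $E'$ is a separable cubic field because $\bar J$ is a division algebra. Lift $E'$ to an unramified cubic field $E$ over $F$ with $\bar E = E'$, using Hensel's lemma on a separable generator. Lift $\mu'$ to a unit $\mu \in \mfo^\times$. Since $\mu' \notin N_{E'}(E'^\times)$, the unit $\mu$ is not a norm from $E$: norms of units reduce to residue norms, and a non-unit norm has valuation divisible by $3$, hence nonzero, so it cannot equal $\mu$.

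By Example~\ref{e.UNRAAS}~(a), $J(E,\mu)$ is then an unramified Freudenthal division algebra of dimension $9$ with residue algebra $J(E',\mu') \cong \bar J$. The uniqueness part of Thm.~\ref{t.UNRALI} yields $J(E,\mu) \cong J$. So $J$ is a first Tits construction, and $\tau$ is distinguished by Thm.~\ref{t.CHADIS}.

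\emph{Main obstacle.} The delicate step is the forward direction's normalization of $\lambda(\mu)$. It must be justified that $E$ is unramified, and that multiplying $\mu$ by norms (with $\lambda(N_E(E^\times)) = 3\IZ$) together with the opposite trick reaches a unit exactly as in the proof of Cor.~\ref{c.TRICHO}. Alternatively, Cor.~\ref{c.TRICHO} could be invoked directly: it already excludes the ramified case, since $J$ is unramified.
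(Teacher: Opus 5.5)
Your converse direction ($\bar\tau$ distinguished $\Rightarrow$ $\tau$ distinguished) is exactly the paper's argument. You lift $J(E',\mu')$ to $J(E,\mu)$ by Example~\ref{e.UNRAAS}~(a) and conclude $J\cong J(E,\mu)$ from the uniqueness in Thm.~\ref{t.UNRALI}.

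\textbf{The forward direction has a genuine gap.} You take an arbitrary pair $(E,\mu)$ supplied by Thm.~\ref{t.CHADIS}, show $e_{E/F}=1$, and normalize to $\mu\in\mfo^\times$. Both steps are fine. (Note that $\lambda_J(j_1)=\lambda(\mu)/3$ excludes $\lambda(\mu)\in\{1,2\}$ directly; the ``opposite'' remark is vacuous.) You then need $\bar E/\bar F$ separable in order to apply Example~\ref{e.UNRAAS}~(a). Your justification, that $\bar E$ sits inside the separable algebra $\bar J$, is not valid: separability of a Jordan algebra says nothing about its cubic subfields. In characteristic $3$, Freudenthal division algebras routinely contain purely inseparable cubic subfields; compare Cor.~\ref{c.TRICHO}~(a) and the proof of Thm.~\ref{embed}.

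This really occurs for first Tits representations of unramified algebras when $\ch\bar F=3$ and $\bar F$ is imperfect. Take $F=\bar F((t))$ and $\alpha,\gamma\in\bar F$ with the Artin--Schreier cyclic algebra $[\alpha,\gamma)_{\bar F}$ a division algebra. Let $Y^3-Y=\gamma t^{-3}$. Then $\gamma=N(tY)$, so $[\gamma t^{-3},\gamma)=0$ and the unramified algebra $D=[\alpha,\gamma)_F$ equals $[\alpha+\gamma t^{-3},\gamma)$. The Artin--Schreier field $E$ of $\alpha+\gamma t^{-3}$ has $e_{E/F}=1$ and residue field $\bar F(\gamma^{1/3})$. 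Hence $D^{(+)}\cong J(E,\gamma)$ is unramified, yet $\bar E$ is purely inseparable, and $J(\bar E,\bar\gamma)$ is neither division nor regular. A dihedral variant of the same construction gives this phenomenon with $\Cent(B)$ a field. Since Thm.~\ref{t.CHADIS} gives you no control over which $E$ occurs, your forward argument only works when $\ch\bar F\neq 3$ or $\bar F$ is perfect.

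\textbf{How the paper avoids this.} It deduces the forward direction from the converse and never needs to control a cubic subfield:
\begin{itemize}
\item By \ref{ss.ISDINV} and the existence of distinguished involutions on $\bar B$, some distinguished involution of $\bar B$ has the form $\bar\tau^{q}$ with $q\in\bar J^\times$. Lift $q$ to $p\in\mfo_J^\times$, so that $\overline{\tau^p}=\bar\tau^{\bar p}$.
\item By the converse, $\tau^p$ is distinguished.
\item Distinguished involutions are unique up to isomorphism, so $(B,\tau)\cong(B,\tau^p)$.
\item Such an isomorphism preserves $\mfo_B$ and $\mfp_B$, hence induces $(\bar B,\bar\tau)\cong(\bar B,\bar\tau^{\bar p})$. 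Therefore $\bar\tau$ is distinguished.
\end{itemize}
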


\begin{proof}
$J := H(B,\tau)$ is an unramified Freudenthal division algebra of dimension $9$ for which \ref{t.CEASDI} implies $\bar J = H(\bar B,\bar\tau)$. Assume first $\bar\tau$ is distinguished. Then $\bar J$ is a first Tits construction (Thm.~\ref{t.CHADIS}), so $\bar J = J(E^\prime,\mu^\prime)$, for some separable cubic field extension $E^\prime/\bar F$ and some $\mu^\prime \in \bar F^\times \setminus N_{E^\prime}(E^{\prime\times})$. Lifting $E^\prime$ to an unramified cubic extension $E/F$ \cite[III, Thm.~2]{MR354618} and $\mu^\prime$ to a scalar $\mu \in \mfo^\times$, we clearly have $\mu \notin N_E(E^\times)$, and the first Tits construction $J^\prime := J(E,\mu)$ by Example~\ref{e.UNRAAS} is an unramified Freudenthal division $F$-algebra of dimensio $9$ having $\overline{J^\prime} = \bar J$. Hence Thm.~\ref{t.CHADIS} implies that $J \cong J^\prime$ is a first Tits construction, forcing the involution $\tau$ to be distinguished (Thm.~\ref{t.CHADIS}). Conversely, assume $\tau$ is distinguished. Combining \ref{ss.ISDINV} with \cite[Thm.~2.10]{MR2063796}, we find an element $p \in \mfo_J^\times$ such that $\overline{\tau^p} = \bar\tau^{\bar p}$ is a distinguished involution of $\bar B$. By what we have just proved, therefore, $\tau^p$ is a distinguished involution of $B$. But distinguished involutions are essentially unique, and we conclude $(B,\tau) \cong (B,\tau^p)$, hence $(\bar B,\bar\tau) \cong (\bar B,\bar\tau^{\bar p})$. Thus $\bar\tau$ is distinguished.
\end{proof}	

Combining Cor.~\ref{c.REDIIN} with Prop.~\ref{p.HOMDIS}, we finally obtain

\begin{cor} \label{c.UNFRHO} 
An unramified Freudenthal division $F$-algebra $J$ of dimension $9$ is homogeneous if and only if $\bar J$ is homogeneous. \hfill $\square$	
\end{cor}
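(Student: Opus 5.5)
The plan is to translate homogeneity on both sides into the language of involutions via Prop.~\ref{p.HOMDIS}, and then to transport distinguishedness between $B$ and $\bar B$ via Cor.~\ref{c.REDIIN}. By Terminology~\ref{t.CEASDI} and Thm.~\ref{UNSETI}, we can write $J = H(B,\tau)$, where $(B,\tau)$ is an unramified central associative division algebra of degree $3$ with unitary involution over $F$ and $K := \Cent(B)$. Then $K/F$ is unramified, $(\bar B,\bar\tau)$ is a central associative division algebra of degree $3$ with unitary $\bar K/\bar F$-involution, and $\bar J = H(\bar B,\bar\tau)$. By Prop.~\ref{p.HOMDIS}, (i)$\Leftrightarrow$(iii), $J$ is homogeneous if and only if every $K/F$-involution of $B$ is distinguished. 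Likewise, $\bar J$ is homogeneous if and only if every $\bar K/\bar F$-involution of $\bar B$ is distinguished.

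First I would show that homogeneity of $\bar J$ implies homogeneity of $J$. Let $\tau'$ be any $K/F$-involution of $B$. By \ref{ss.ISDINV}, $\tau' = \tau^p$ for some $p \in J^\times$, and we may scale $p$ by a power of a prime element of $\mfo$. Since $J$ is unramified, $\lambda_J(J^\times) = \IZ$ by \eqref{RAMONE}, so after this scaling we can take $p \in \mfo_J^\times$. Then $\tau^p$ preserves $\mfo_B$, and $(B,\tau^p)$ is again unramified with reduction $\overline{\tau^p} = \bar\tau^{\bar p}$. This reduction is a $\bar K/\bar F$-involution of $\bar B$, hence distinguished by hypothesis, and Cor.~\ref{c.REDIIN} then gives that $\tau^p$ is distinguished.

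For the converse, let $\tau''$ be any $\bar K/\bar F$-involution of $\bar B$. By \ref{ss.ISDINV} over $\bar F$, $\tau'' = \bar\tau^{q}$ for some $q \in \bar J^\times$. Because $\mfo_J \to \bar J$ is surjective and units lift to units, we can choose $p \in \mfo_J^\times$ with $\bar p = q$. Homogeneity of $J$ makes $\tau^p$ distinguished, so Cor.~\ref{c.REDIIN} shows that $\overline{\tau^p} = \bar\tau^{\bar p} = \tau''$ is distinguished.

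The main technical point is the identity $\overline{\tau^p} = \bar\tau^{\bar p}$ for $p \in \mfo_J^\times$, together with the fact that $(B,\tau^p)$ is still unramified in the sense of \ref{t.CEASDI}. The second fact is immediate, because unramifiedness depends only on $B$. The first follows because $\tau$ preserves $\mfo_B$ and $\mfp_B$ (the valuation is unique, so $\lambda_B\circ\tau = \lambda_B$), and because conjugation by the unit $p$ commutes with reduction; this identity is already used in the proof of Cor.~\ref{c.REDIIN}. The normalization reducing an arbitrary $p \in J^\times$ to a unit is where unramifiedness of $J$ is genuinely needed.
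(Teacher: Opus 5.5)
Your proof is correct and follows the paper's own argument: the paper just combines Prop.~\ref{p.HOMDIS} with Cor.~\ref{c.REDIIN}. Your normalization of $p$ to a unit of $\mfo_J$ (via $\lambda_J(J^\times)=\IZ$) and your lifting of units from $\bar J$ supply the bookkeeping that matches $K/F$-involutions of $B$ with $\bar K/\bar F$-involutions of $\bar B$, which the paper leaves implicit. The one case your setup skips, $\Cent(B)\cong F\times F$ (i.e.\ $J\cong A^{(+)}$), is harmless: there both $J$ and $\bar J=\bar A^{(+)}$ are homogeneous.
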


The analogue of this result for Albert division algebras may be found in \cite[Thm.~6]{MR0379620}. 

\section{Epilogue: Jordan algebras of Clifford type}\label{quadratic}
In this final section, we describe those Jordan algebras of Clifford type that are homogeneous. We begin by recalling the basic definitions.

\subsection{Jordan algebra of a pointed quadratic module} \label{ss.JOQUAM}
Let $(V, q, e)$ be a pointed quadratic module over $k$, 
so $V$ is a finite dimensional $k$-vector space, $q:V\rightarrow k$ a quadratic form and $e\in V$ is a distinguished element, the \emph{base point}, satisfying $q(e)=1$. The conjugation on $V$ is defined by $x\mapsto\overline{x}:= q(e,x)e-x$. 
Then $V$ with the unit element $e$ and the $U$-operator defined by 
$$U_x(y) := q(x, \overline{y})x-q(x)\overline{y},$$
is a Jordan algebra over $k$, denoted by $J := J(V,q,e)$ and called the \emph{Jordan algebra associated with $(V,q,e)$}. If $\dim_k(V) > 1$, then the generic norm of $J$ equals $q$, and it follows from \cite[Exc.~31.34~(c)]{MR4806163} that the structure group of $J$ is the group $GO(q)=\Sim(q)$ of similitudes of $q$. Jordan algebras associated with pointed quadratic modules are said to be \emph{of Clifford type}.

\subsection{Round quadratic forms} \label{ss.RDQUAF} Recall from \cite[p.~52]{MR2427530} that a quadratic form $q$ is said to be \emph{round} if $D(q)=G(q)$. Here 
$D(q)$ is the set of nonzero values of $q$ and $G(q)$ denotes the subgroup of $k^\times$ of similitudes of $q$. For example, Pfister forms, hyperbolic forms are round. 

We now prove

\begin{thm}\label{quad-homog} Let $(V,q,e)$ be a pointed quadratic module over $k$ and $J := J(V,q,e)$ the associated Jordan algebra. If $J$ is homogeneous, then $q$ is round. Conversely, if $q$ is round and regular, then $J$ is homogeneous.
\end{thm}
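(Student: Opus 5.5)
The plan is to work throughout with the description of the structure group recalled in \ref{ss.JOQUAM}: for $\dim_k(V)>1$ we have $\Str(J)=\Sim(q)$. Homogeneity then means that $\Sim(q)$ acts transitively on $J^\times$. The case $\dim_k(V)=1$ is trivial: then $J=k$, and both homogeneity and roundness are automatic, since $q(x)=x^2$ and $G(q)\supseteq k^{\times2}$. So we may assume $\dim_k(V)>1$.

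First we identify $J^\times$. The norm of $J$ is $q$, so $x\in J^\times$ if and only if $q(x)\neq0$. This is the standard fact that an element of a Jordan algebra of Clifford type is invertible iff its generic norm is nonzero, with inverse $q(x)^{-1}\bar x$; one can check it directly from $U_x\bar x=q(x,x)x-q(x)x=q(x)x$ and $U_x x^{-1}=x$.

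\emph{Homogeneous $\Rightarrow$ round.} The inclusion $G(q)\subseteq D(q)$ always holds, because $\gamma=\gamma q(e)=q(\phi(e))$ for any similitude $\phi$ of multiplier $\gamma$. For the reverse inclusion, let $\alpha\in D(q)$, say $\alpha=q(x)$. Then $x\in J^\times$, so homogeneity gives $\eta\in\Str(J)=\Sim(q)$ with $\eta(e)=x$. If $\eta$ has multiplier $\gamma$, then $\alpha=q(x)=\gamma q(e)=\gamma$, so $\alpha\in G(q)$. Hence $D(q)=G(q)$.

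\emph{Round and regular $\Rightarrow$ homogeneous.} Let $x\in J^\times$, so $\alpha:=q(x)\in D(q)=G(q)$. Choose a similitude $\psi$ with multiplier $\alpha$. Then $y:=\psi(e)$ satisfies $q(y)=\alpha=q(x)$. By Witt's extension theorem for the regular form $q$ (valid in arbitrary characteristic for regular quadratic forms, \cite[Thm.~8.3]{MR2427530}), there is an isometry $\theta\in O(q)$ with $\theta(y)=x$; it extends the isometry $ky\to kx$, which is defined since $q(y)=q(x)\neq0$. Then $\theta\psi\in\Sim(q)=\Str(J)$ maps $e$ to $x$, so $\Str(J)$ acts transitively on $J^\times$.

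The main obstacle is regularity in characteristic $2$, where Witt cancellation needs the right notion of regularity (nonsingular, possibly with a one-dimensional radical on which $q$ is anisotropic). We must check that the version in \cite{MR2427530} covers the one-dimensional subspaces $ky$ and $kx$ with $q$ nonzero on them. If it does not, we fall back on composing reflections: $\tau_{x-y}$ when $q(x-y)\neq0$, and otherwise a product of two reflections.
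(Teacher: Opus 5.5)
Your proof is correct and is essentially the paper's own argument. It uses $\Str(J)=\Sim(q)$, compares multipliers with norms for the forward direction, and for the converse takes a similitude of multiplier $q(x)$ followed by an isometry from Witt's extension theorem; you move $e$ to $x$ where the paper moves $v$ to $e$, which is immaterial.

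Your characteristic-$2$ hedge is unnecessary. In this paper ``regular'' means that the polar bilinear form of $q$ is nondegenerate (recall that $k$ itself is not a regular composition algebra in characteristic $2$), so Witt's theorem applies to $ky\to kx$ with no further condition. A nonzero radical could not have been handled by your reflection fallback anyway, because every similitude preserves the radical of the polar form.
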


\begin{proof} Assume $J$ is homogeneous. Since $1 = q(e) \in D(q)$, we have  $G(q) \subseteq D(q)$ \cite[Lemma~9.1]{MR2427530}. Conversely, let $\alpha\in D(q)$. Then $\alpha=q(v)$ for some $v\in V$. Since $\Str(J)$ acts transitively on the invertible elements of $J$, there exists $\psi\in \Str(J)$ with $\psi(v)=e$. Let $\nu(\psi)\in k^{\times}$ be the factor of similitude for $\psi$. Then, taking norms, we get 
$$\nu(\psi)\alpha = \nu(\psi)q(v) = q(\psi(v)) = q(e) = 1, $$
hence $\alpha = \nu(\psi)^{-1} = \nu(\psi^{-1}) \in G(q)$.
Therefore we have $G(q)=D(q)$ and $q$ is round. 

Conversely suppose $q$ is round and regular, so $D(q)=G(q)$ and let $v\in V$ with $\alpha := q(v)\neq 0$. We then have $\psi\in G(q)$ with 
$\nu(\psi)=\alpha$. Hence $q(\psi(e))=\alpha$. By Witt's extension theorem \cite[Thm.~8.3]{MR2427530}, there exists an isometry $\phi:(V,q)\rightarrow (V,q)$ with $\phi(v) = \psi(e)$. Hence $\theta:= \psi^{-1}\phi\in \Str(J)$ satisfies $\theta(v) = e$. Hence $\Str(J)$ is transitive on the set of invertible elements of $J$ and $J$ is homogeneous. 
\end{proof}

Since Pfister quadratic forms are not only round but also stable under base field extensions, the following conclusion is immediate.

\begin{cor} \label{PFIHOM}
Let $q\:V \to k$ be a Pfister quadratic form and suppose $e \in V$ satisfies $q(e) = 1$. Then the Jordan algebra $J(V,q,e)$ is strictly homogeneous. \hfill $\square$    
\end{cor}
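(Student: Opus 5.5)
The plan is to reduce Cor.~\ref{PFIHOM} to Thm.~\ref{quad-homog}. For an arbitrary field extension $L/k$ we have to show that $J(V,q,e)_L$ is homogeneous.

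First, base change commutes with the construction. The Jordan algebra $J(V,q,e)\otimes_k L$ is canonically $J(V_L,q_L,e\otimes 1)$. This holds because the $U$-operator $U_x y = q(x,\bar y)x - q(x)\bar y$ and the conjugation $\bar x = q(e,x)e - x$ are given by polynomial formulas in $q$ and $e$, so they commute with extension of scalars. We also still have $q_L(e\otimes 1)=1$.

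Second, the hypotheses of the converse part of Thm.~\ref{quad-homog} persist after base change. The form $q_L$ is again a Pfister quadratic form over $L$, since it is the scalar extension of a tensor product of a bilinear Pfister form and a binary norm form. Pfister forms are round \cite[Cor.~9.9]{MR2427530}, so $q_L$ is round. As for regularity, quadratic Pfister forms are nonsingular by definition, being built from a regular binary form $\dla a]]$ tensored with bilinear Pfister forms. Nonsingularity is stable under field extension, because it amounts to the polar bilinear form being nondegenerate, a determinant condition.

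Third, applying Thm.~\ref{quad-homog} over $L$ shows that $J(V_L,q_L,e)$ is homogeneous. As $L$ was arbitrary, $J(V,q,e)$ is strictly homogeneous.

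The only step requiring care is regularity in characteristic $2$. There one must make sure that "regular" in Thm.~\ref{quad-homog} is exactly what is needed for Witt's extension theorem \cite[Thm.~8.3]{MR2427530}, and that quadratic Pfister forms (even-dimensional and nonsingular) satisfy it. I expect this to follow directly from the definitions in \cite{MR2427530}. The degenerate case $\dim V=1$, i.e.\ $V=k$, is trivially homogeneous, so it causes no trouble.
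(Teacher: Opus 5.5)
Your proposal is correct and follows the same route as the paper. The paper deduces the corollary at once from Thm.~\ref{quad-homog}, since Pfister forms are round and remain Pfister forms under every base field extension; you additionally make explicit what the paper leaves implicit, namely that $J(V,q,e)_L = J(V_L,q_L,e)$, that regularity survives base change, and that the one-dimensional case $q=\langle 1\rangle$ (which is not regular in characteristic $2$) is trivially homogeneous.
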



\bibliographystyle{amsalpha}
\def\cprime{$'$}
\providecommand{\bysame}{\leavevmode\hbox to3em{\hrulefill}\thinspace}
\providecommand{\MR}{\relax\ifhmode\unskip\space\fi MR }
\providecommand{\MRhref}[2]{%
  \href{http://www.ams.org/mathscinet-getitem?mr=#1}{#2}
}


\end{document}